\documentclass{amsart}
\usepackage{stmaryrd}
\usepackage{amsmath}
\usepackage{tikz}
\usepackage[hidelinks]{hyperref}
\hypersetup{
  pdftitle={Back Stable Standard Elementary Monomials},
  pdfauthor={Carlos Rodriguez}
}

\title{Back Stable Standard Elementary Monomials}
\author{Carlos Rodriguez}

\newcommand{\Q}{\mathbb{Q}}
\newcommand{\Z}{\mathbb{Z}}

\newcommand{\T}{\mathfrak{T}}

\newcommand{\sem}{\mathbf{e}}
\newcommand{\bsem}[1]{\overleftarrow{\mathbf{e}}_{#1}}
\newcommand{\EE}{\mathbf{E}}
\newcommand{\chm}{\mathbf{h}}
\newcommand{\bchm}[1]{\overleftarrow{\mathbf{h}}_{#1}}
\newcommand{\HH}{\mathbf{H}}
\newcommand{\schub}{\mathfrak{S}}
\newcommand{\bschub}{\overleftarrow{\schub}}
\newcommand{\Rg}{\mathcal{R}}
\newcommand{\RP}{\operatorname{RP}}
\newcommand{\wt}{\operatorname{wt}}
\newcommand{\CC}{\mathcal{C}}

\newcommand{\bE}{\overleftarrow{\mathcal{E}}}
\newcommand{\bH}{\overleftarrow{\mathcal{H}}}
\newcommand{\supp}{\operatorname{supp}}

\newcommand{\Par}{\operatorname{Par}}
\newcommand{\Orb}{\operatorname{Orb}}
\newcommand{\parz}{\operatorname{par}}

\theoremstyle{plain}
\newtheorem{theorem}{Theorem}[section]
\newtheorem{lemma}[theorem]{Lemma}
\newtheorem{corollary}[theorem]{Corollary}
\newtheorem{fact}[theorem]{Fact}
\newtheorem{proposition}[theorem]{Proposition}
\newtheorem{claim}[theorem]{Claim}
\theoremstyle{definition}
\newtheorem{definition}[theorem]{Definition}
\newtheorem{example}[theorem]{Example}

\tikzset{every picture/.style={line width=0.75pt}}

\begin{document}

\begin{abstract}
    We introduce back stable standard elementary monomials and their complete homogeneous analogues. These families form bases of the back stable Schubert ring and record the eventual standard elementary monomial expansions of left-stabilized Schubert polynomials. Dynkin reversal exchanges the two bases while preserving their coefficients. We also show that orbit sums of the back stable coefficients recover the elementary and complete homogeneous coefficients of Stanley symmetric functions. Finally, we study the shifted specialization polynomial
    \[ \T_w(t)=\schub_{1^t\times w}(1), \]
    considered by Fomin and Kirillov, which counts reduced pipe dreams of $1^t\times w$. We express this polynomial in terms of back stable standard elementary monomial coefficients.
\end{abstract}

\maketitle

\section{Introduction}

\subsection{Overview}

Schubert polynomials $\{ \schub_w : w \in S_\infty \}$ form a distinguished basis of $\Q[x_1,x_2, \ldots]$ and admit combinatorial expansions into several bases. One such basis is the standard elementary monomials (SEMs), products of elementary symmetric polynomials in nested sets of variables. Fomin, Gelfand, and Postnikov \cite{FGP} first introduced the connection between Schubert polynomials and SEMs. Since then, these expansions have been studied from several perspectives, including recent work revisiting Schubert polynomial expansions \cite{NadeauSpinkTewari}. Nevertheless, there are several important open questions regarding the SEM expansions of Schubert polynomials
\[ \schub_w = \sum_\alpha \kappa_\alpha^w \cdot \sem_{\alpha}. \]
Among them is the problem of finding a cancellation-free or combinatorial formula for the coefficients $\kappa_\alpha^w$. In this paper, we study these coefficients using back stabilization of Schubert polynomials, an operation devised by Lam, Lee, and Shimozono \cite{LLS}. Given a permutation $w$, consider the Schubert polynomials $\schub_{1^m \times w}$ obtained by adjoining $m$ fixed points to the left of $w$. After shifting the SEMs, the coefficients of $\schub_{1^m \times w}$ in the SEM basis eventually stabilize and have constant support as $m \to \infty$. We formalize this by introducing \emph{back stable standard elementary monomials}, indexed by finitely supported tuples of nonnegative integers on $\Z$, along with their complete homogeneous analogues. 
\begin{theorem}
    The back stable SEMs and the back stable Schubert polynomials form linear bases of the same ambient ring $\Rg$.
\end{theorem}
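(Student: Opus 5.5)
The plan is to work inside the Lam--Lee--Shimozono back stable ring $\Rg = \Lambda \otimes \Q[\ldots,x_{-1},x_0,x_1,\ldots]$, where $\Lambda$ is symmetric functions in the variables $x_i$ with $i\le 0$. In this ring the back stable Schubert polynomials $\bschub_w$, $w\in S_\Z$, are already known to be a basis \cite{LLS}. The back stable SEM $\bsem{\alpha}$ for a finitely supported $\alpha:\Z\to\Z_{\ge0}$ will be realized as the product over $i\in\Z$ of back stable elementary functions $e_{\alpha_i}(x_{\le i})$. Each factor lies in $\Rg$: it equals $\sum_k e_k(x_{\le 0})\,e_{\alpha_i-k}(x_1,\dots,x_i)$ for $i\ge0$, with the analogous inverse expansion for $i<0$. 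So the only content is that $\{\bsem{\alpha}\}$ is a basis of $\Rg$, and I will prove it by comparing with a known basis.

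First I fix a grading and filtration. Both families are homogeneous, and $\Rg$ is spanned by the elements $f\cdot x^\beta$ with $f\in\Lambda$. For each $n$, let $\Rg_{[-n,n]}$ be the span of $\Lambda$ times monomials in $x_{-n+1},\dots,x_n$; this is a $\Lambda$-algebra that is free over $\Lambda$. Its graded pieces are infinite-dimensional only through $\Lambda$. Since $\Lambda$ itself is infinite, I will not count dimensions. Instead I will use the standard isomorphism: setting $x_{\le -n}$ as the symmetric-function variables, $\Rg_{[-n,n]}\cong\Lambda(x_{\le -n})\otimes\Q[x_{-n+1},\dots,x_n]$.

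\textbf{Spanning.} The functions $e_k(x_{\le j})$ for all $j,k$ generate $\Rg$ as an algebra, because $x_j = e_1(x_{\le j})-e_1(x_{\le j-1})$ and $\Lambda$ is generated by the $e_k(x_{\le 0})$. Arbitrary products of such elements are not standard, since the same $j$ may repeat. To straighten them I use the relation $e_a(x_{\le j})e_b(x_{\le j})$ expressed via $e(x_{\le j}) = e(x_{\le j-1})(1+x_j)$. This is exactly the back stable version of the Fomin--Gelfand--Postnikov straightening for $e_a(x_1..x_j)$, and it lowers a suitable order on multisets of indices. Alternatively, and more cleanly, I will use the stability statement from the introduction: $\bschub_w$ is the limit of the shifted $\schub_{1^m\times w}$, whose shifted SEM coefficients stabilize with constant support. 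This gives $\bschub_w=\sum_\alpha \kappa\, \bsem{\alpha}$ as a finite sum, which I expect the paper proves just before this statement. Since $\{\bschub_w\}$ spans $\Rg$, the SEMs span too.

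\textbf{Independence.} Suppose $\sum c_\alpha \bsem{\alpha}=0$ is a finite relation. Choose $n$ with all supports in $[-n,n]$ and specialize through the shift by $m$. The map $\Rg\to\Q[x_1,x_2,\dots]$ that sends $x_i\mapsto x_{i+m}$ for $i>-m$ and $x_i\mapsto0$ for $i\le -m$ is a ring homomorphism, using $\Lambda\ni f\mapsto f(x_1,\dots,x_m)$. It sends $\bsem{\alpha}$ to the ordinary SEM $\sem_{\alpha[m]}$ with indices shifted by $m$, and sends factors with $i\le -m$ and $\alpha_i>0$ to $0$. For $m>n$ no factor vanishes, and distinct $\alpha$ give distinct ordinary shifted SEMs. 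Since these are linearly independent in $\Q[x_1,\dots]$ by \cite{FGP}, all $c_\alpha=0$. One point needs care: the FGP basis requires $\alpha_i\le i$, so I must check that the shifted index range stays legal. This is automatic for large $m$ because $\alpha$ is bounded.

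I expect the main obstacle to be spanning, or equivalently the stabilization claim. One must show that the SEM expansion of $\schub_{1^m\times w}$ has support and coefficients eventually constant under shifting, and that the limiting finite sum equals $\bschub_w$ in $\Rg$, rather than only agreeing after each finite specialization. For the equality, I will use that the specialization maps above jointly separate $\Rg$: an element killed for all large $m$ is $0$. This uses injectivity of $\Lambda\to\varprojlim\Q[x_1..x_m]^{S_m}$ in each degree. For the stabilization itself, I will try the divided-difference characterization. Both $\bschub_w$ and the limit satisfy the same $\partial_i$ recursions on $\Rg$, provided $\partial_i$ acts on back stable SEMs by a shifted version of the FGP rule. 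They also agree on $w=\mathrm{id}$, and this gives uniqueness.
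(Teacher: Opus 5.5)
\paragraph{Verdict.} The proposal contains a correct proof, the same as the paper's, but the spanning route you say you will actually use is circular; you need to commit to the straightening route instead.

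\paragraph{What matches the paper.} Your linear-independence argument is the paper's: shift or truncate far enough left that every $\bsem{\alpha}$ in the relation becomes a distinct admissible ordinary SEM, then invoke Fomin--Gelfand--Postnikov. The straightening route you mention first for spanning is also the paper's. Choose $b$ with $f\in\Lambda(\ldots,x_b)\otimes\Q[x_{b+1},\ldots,x_q]$. Write $f$ in the generators $\sem_r(\ldots,x_k)$ with $k\ge b$, using $x_i=\sem_1(\ldots,x_i)-\sem_1(\ldots,x_{i-1})$. Then remove repeated endpoints with the shifted FGP exchange identity, which rewrites $\sem_r(\ldots,x_k)\sem_s(\ldots,x_k)$ using only endpoints $k$ and $k+1$. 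Endpoints therefore never drop below $b$, and FGP's termination argument at fixed degree applies. Your phrase ``expressed via $e(x_{\le j})=e(x_{\le j-1})(1+x_j)$'' needs to be replaced by such an explicit identity; with that, the proof is complete.

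\paragraph{The gap.} The problem is the route you call cleaner. Stabilization of the shifted SEM expansions of $\schub_{1^m\times w}$ is not available before this theorem. In the paper it is Theorem~\ref{thm:stabilization}, proved by applying $\gamma^m$ to the back stable SEM expansion of $\bschub_w$ and then specializing. So it is a consequence of the spanning half of the statement you are proving, and citing it is circular.

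Your plan to prove stabilization independently with divided differences does not close the loop:
\begin{itemize}
\item The candidate limit $\sum_\alpha\bigl(\lim_m\kappa^{\gamma^m(w)}_{\gamma^m(\alpha)}\bigr)\bsem{\alpha}$ is not an element of $\Rg$ until you already know the coefficients stabilize with finite support. So you cannot apply $\partial_i$ to it.
\item An inductive version (if each $\bschub_{ws_i}$ lies in the span of $\bE$, so does $\bschub_w$) needs the span of $\bE$ to be closed under solving $\partial_i F=g_i$ for all $i$. That is itself a spanning statement.
\item There is no closed ``shifted FGP rule'' for $\partial_i$ on $\bE$. We have $\partial_i\sem_d(\ldots,x_i)=\sem_{d-1}(\ldots,x_{i-1})$, and the other factors are $s_i$-invariant. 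Hence $\partial_i\bsem{\alpha}$ acquires a repeated endpoint $i-1$ whenever $\alpha_i\ge 2$ and $\alpha_{i-1}>0$, and you are back to straightening anyway.
\end{itemize}
Drop the stabilization route and prove spanning by straightening. Stabilization then follows afterwards as a corollary, as in the paper.
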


Expansion in this basis reveals the limiting behavior of $\schub_{1^m \times w}$ as $m \to \infty$.
Back stability reveals symmetries not apparent in the finite setting. One such symmetry is \emph{Dynkin reversal}, which is defined by 
\[ w \mapsto \widehat{w}, \qquad \widehat{w}(i) = 1-w(1-i). \]
For example, if $w$ maps $(1,2,3) \mapsto (3,1,2)$ and is fixed everywhere else, then $\widehat{w}$ maps $(-2,-1,0) \mapsto (-1,0,-2)$. This Dynkin reversal induces a natural symmetry on the back stable Schubert polynomials when considering the back stable SEM expansion.

\begin{theorem}
    The reversing linear map $\bschub_w \mapsto \bschub_{\widehat{w}}$ sends back stable SEMs to CHMs while keeping coefficients unchanged.
\end{theorem}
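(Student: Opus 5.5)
The approach is to work in the LLS model of back stable Schubert calculus. There the ring $\Rg = \Lambda \otimes \Q[\ldots,x_{-1},x_0,x_1,\ldots]$, and $\bschub_w$ is characterized by three properties: $\bschub_{\mathrm{id}}=1$, homogeneity of degree $\ell(w)$, and the divided difference relations $\partial_i \bschub_w = \bschub_{ws_i}$ if $ws_i<w$ and $0$ otherwise, for all $i\in\Z$. I would define the reversing map $\omega^\sharp:\Rg\to\Rg$ as a ring map, not merely as a linear map on the Schubert basis. It combines the involution $\omega$ of $\Lambda$ (sending $e_k\mapsto h_k$, or $p_k \mapsto (-1)^{k-1}p_k$ up to the sign conventions forced below) with the variable substitution $x_i\mapsto -x_{1-i}$.

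The first step is to check that this ring map intertwines divided differences: $\omega^\sharp\circ\partial_i = \partial_{-i}\circ\omega^\sharp$. This holds because $s_i$ on indices $i,i+1$ corresponds under $i\mapsto 1-i$ to $s_{-i}$. The sign on the variables is chosen so that $x_i - x_{i+1}\mapsto x_{-i}-x_{1-i}$ has the correct orientation. The symmetric-function factor commutes with all $\partial_i$ except through the LLS "shift" relation linking $\Lambda$ with the $x$'s, namely $p_k(x_{\le 0})$ built into $\Rg$. That relation is exactly what forces $\omega$ to accompany the reversal of variables. Since $\widehat{ws_i}=\widehat w\, s_{-i}$ and $\ell(\widehat w)=\ell(w)$, the uniqueness characterization gives $\omega^\sharp(\bschub_w)=\bschub_{\widehat w}$. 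The linear map of the statement therefore coincides with the ring automorphism $\omega^\sharp$.

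The second step is to compute $\omega^\sharp$ on the back stable SEMs. A back stable SEM $\bsem{\alpha}$ is a product of back stable elementary polynomials $\overleftarrow{e}_k(x_{\le j})$, which is the elementary function in the variables $\ldots,x_{j-1},x_j$ interpreted in $\Rg$. Since $\omega^\sharp$ is multiplicative, it suffices to compute its effect on a single factor. The substitution sends $x_{\le j}$ to $-x_{\ge 1-j}$, and $\omega$ turns elementary into complete homogeneous. Writing $e_k(x_{\le j}) = e_k(\text{all}) $ corrected by the finite window and using $e_k(-x_{> -j}) $, we combine with the identity $\sum_k e_k(A)(-1)^k h_{n-k}(A)=0$. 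The expected conclusion is $\omega^\sharp(\overleftarrow{e}_k(x_{\le j}))=\overleftarrow{h}_k(x_{\le -j})$, possibly up to a global sign $(-1)^k$. Any such sign is uniform in degree and can be absorbed into the sign convention for $\omega^\sharp$ on variables. Hence $\omega^\sharp(\bsem{\alpha})=\bchm{\widehat\alpha}$, where $\widehat\alpha$ is the reversed tuple $\widehat\alpha_j=\alpha_{-j}$ (with the index shift dictated by the definition).

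Finally, apply $\omega^\sharp$ to the expansion $\bschub_w=\sum_\alpha \kappa^w_\alpha \bsem{\alpha}$. This gives $\bschub_{\widehat w}=\sum_\alpha \kappa^w_\alpha\bchm{\widehat\alpha}$, which is the statement. The main obstacle is the single-factor computation in the second step. One must match the indexing of the back stable elementary and complete homogeneous polynomials exactly, and get both the sign and the index shift right. This involves careful bookkeeping of how $\Lambda$ encodes the "infinitely many negative variables" under the reversal $i\mapsto 1-i$. I would first test it on $\bschub_{s_0}=\overleftarrow{e}_1(x_{\le 0})$ and $\widehat{s_0}=s_0$ to fix all conventions.
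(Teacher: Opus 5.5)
Your outline has the same architecture as the paper's proof. The paper uses the ring involution $\widetilde{\omega}$ of $\Rg$, with $e_r\mapsto h_r$ on $\Lambda^-$ and $x_i\mapsto -x_{1-i}$, sending $\bschub_w$ to $\bschub_{\widehat w}$; it cites this from Lam--Lee--Shimozono, while you re-derive it from $\omega^\sharp\circ\partial_i=\partial_{-i}\circ\omega^\sharp$ plus uniqueness. Then comes a factorwise computation on $\bsem{\alpha}$, and the map is applied to the expansion.

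The gap is your second step, which is the actual content of the statement. You never establish the exact identity
\[ \widetilde{\omega}\bigl(\sem_m(\ldots,x_{k-1},x_k)\bigr)=\chm_m(\ldots,x_{-k-1},x_{-k}). \]
Your fallback for a possible degree sign $(-1)^m$ is invalid, because the sign convention is not free: your first step fixes it. Write $\phi(x_i)=c\,x_{1-i}$. Compatibility at $i=0$ on $\Lambda^-$, where $\partial_0 p_k(\ldots,x_0)=h_{k-1}(x_0,x_1)$, forces $p_k\mapsto -c^k p_k$, and then $\phi\circ\partial_i=-c^{-1}\,\partial_{-i}\circ\phi$. So switching to $c=1$ is the same as composing with $f\mapsto(-1)^{\deg f}f$, and the new map sends $\bschub_w$ to $(-1)^{\ell(w)}\bschub_{\widehat w}$. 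A degree-uniform sign therefore only migrates from one side of the expansion to the other. If $(-1)^{|\alpha|}$ were genuinely present, then since $|\alpha|=\ell(w)$ on the support you would get $\bschub_{\widehat w}=(-1)^{\ell(w)}\sum_\alpha\overleftarrow{\kappa^w_\alpha}\,\bchm{\widehat\alpha}$, which is not ``coefficients unchanged.'' The statement is precisely the claim that this sign is absent.

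Your heuristic $x_{\le j}\mapsto -x_{\ge 1-j}$ is also not meaningful in $\Rg$. The tail $x_{\le 0}$ is moved by $\omega$, not by substitution, and doing both double-counts; that is probably where the phantom sign comes from.

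The missing computation is short, and the paper does it with generating functions. Put $\EE_k(u)=\sum_{m\ge0}\sem_m(\ldots,x_k)\,u^m$ and $\HH_k(u)=\sum_{m\ge0}\chm_m(\ldots,x_k)\,u^m$. Then $\EE_k=(1+x_ku)\EE_{k-1}$ and $\HH_k=(1-x_ku)^{-1}\HH_{k-1}$ for every $k\in\Z$. Apply $\widetilde{\omega}$ to the first and replace $k$ by $1-k$ in the second: both $\widetilde{\omega}(\EE_k)$ and $\HH_{-k}$ satisfy $f_k=(1-x_{1-k}u)f_{k-1}$. At $k=0$ every coefficient lies in $\Lambda^-$, where $\widetilde{\omega}$ is the classical $\omega$, so $\widetilde{\omega}(\EE_0)=\HH_0$. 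The recursion propagates this equality to all $k$, with no sign and no index shift, which is exactly why $\widehat\alpha_j=\alpha_{-j}$.

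In your own ``finite window'' language, for $j\ge0$ expand $\sem_m(\ldots,x_j)=\sum_a\sem_{m-a}(\ldots,x_0)\,\sem_a(x_1,\ldots,x_j)$. Compare its image with $\HH_{-j}(u)=\HH_0(u)\prod_{i=1-j}^{0}(1-x_iu)$; the signs $(-1)^a$ match term by term.

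Incidentally, both sign conventions in your definition of $\omega^\sharp$ are forced. Also, $e_k\mapsto h_k$ and $p_k\mapsto(-1)^{k-1}p_k$ describe the same map, so no hedging is needed there either.
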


This allows us to translate every result regarding back stable SEMs into an equivalent result for back stable CHMs. This symmetry is not readily visible in the ordinary SEM setting.

Stanley symmetric functions $F_w$ are also natural to consider in this setting, as they are limits of left-adjoined Schubert polynomials. We prove that the back stable SEM coefficients of $\bschub_w$ refine the coefficients of $F_w$ in the elementary basis. In particular,

\begin{theorem}
  For $w\in S_\Z$ and $\lambda\in\Par$,
  \[ \sum_{\alpha \in \Orb(\lambda)} \overleftarrow{\kappa^w_\alpha} = [e_\lambda]F_w.  \]  
\end{theorem}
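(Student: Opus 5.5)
The plan is to apply to the back stable SEM expansion of $\bschub_w$ the natural ring homomorphism $\epsilon:\Rg\to\Lambda$ that forgets the polynomial variables. Concretely, I will use the Lam--Lee--Shimozono model $\Rg=\Lambda\otimes\Q[x_i : i\in\Z]$, where $\Lambda$ is the ring of symmetric functions in the "back" variables. Define $\epsilon$ to be the identity on $\Lambda$ and to send $x_i\mapsto 0$ for every $i\in\Z$.

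The proof has three steps, carried out in this order.

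\emph{Step 1: $\epsilon$ sends $\bschub_w$ to $F_w$.} By the coproduct formula of \cite{LLS},
\[ \bschub_w=\sum_{u,v} F_u\otimes\schub_v, \]
where the sum runs over length-additive factorizations $w=uv$ (with the conventions there). Every $\schub_v$ with $v\neq\mathrm{id}$ has zero constant term, so $\epsilon$ kills all terms except $u=w$, $v=\mathrm{id}$. Hence $\epsilon(\bschub_w)=F_w$.

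\emph{Step 2: $\epsilon(\bsem{\alpha})=e_{\lambda(\alpha)}$.} Here $\lambda(\alpha)$ denotes the partition obtained by sorting the nonzero entries of $\alpha$, so that $\alpha\in\Orb(\lambda(\alpha))$. A back stable SEM is a product over $i\in\Z$ of back stable elementary functions $e_{\alpha_i}(x_{\le i})$, and each factor is defined by shifting relative to $e_k(x_{\le 0})\in\Lambda$:
\begin{itemize}
\item for $i\ge 0$, $e_k(x_{\le i})=\sum_j e_{k-j}(x_{\le 0})\,e_j(x_1,\dots,x_i)$;
\item for $i<0$, $e_k(x_{\le i})=\sum_j e_{k-j}(x_{\le 0})\,h_j(-x_{i+1},\dots,-x_0)$, i.e.\ the plethystic subtraction of $x_{i+1},\dots,x_0$.
\end{itemize}
In both cases every term with $j>0$ involves some $x_i$ and is killed by $\epsilon$. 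So $\epsilon(e_k(x_{\le i}))=e_k$ for all $i$, and multiplicativity of $\epsilon$ gives $\epsilon(\bsem{\alpha})=\prod_i e_{\alpha_i}=e_{\lambda(\alpha)}$.

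\emph{Step 3: compare coefficients.} Apply $\epsilon$ to $\bschub_w=\sum_\alpha\overleftarrow{\kappa^w_\alpha}\,\bsem{\alpha}$ (a finite sum), and group the terms by the orbit of $\alpha$. This gives
\[ F_w=\sum_{\lambda\in\Par}\Bigl(\sum_{\alpha\in\Orb(\lambda)}\overleftarrow{\kappa^w_\alpha}\Bigr)e_\lambda . \]
Since $\{e_\lambda\}$ is a basis of $\Lambda$, extracting the coefficient of $e_\lambda$ yields the identity in the statement.

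The main obstacle is Step 2, because it depends on exactly how the paper normalizes the back stable SEMs. I must check that the shift defining $\bsem{\alpha}$ for an arbitrary finitely supported $\alpha$ on $\Z$, including negative positions, really is the plethystic one above, so that each factor specializes to the plain $e_k$. If the paper instead defines $\bsem{\alpha}$ as a limit of shifted finite SEMs $\sem_{0^m\alpha}$ under $\schub_{1^m\times w}$, I will first identify that limit with the product form above and then run the same argument. The result also follows directly, because the relevant terms of a finite $e_k(x_1,\dots,x_{m+i})$ tend to $e_k$ as $m\to\infty$.
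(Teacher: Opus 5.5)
Your proof is correct, but it follows a different route from the paper's.

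\textbf{The paper's route.} The paper takes $w\in S_\infty$ and invokes its Stabilization Theorem to write $\schub_{1^m\times w}=\sum_{d(\alpha)\le m}\overleftarrow{\kappa^w_\alpha}\,\sem_{\gamma^m(\alpha)}$. It then lets $m\to\infty$, using $\lim_m \sem_{\gamma^m(\alpha)}=e_{\parz(\alpha)}$ together with its definition of $F_w$ as $\lim_m\schub_{1^m\times w}$. This is essentially your closing fallback remark.

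\textbf{Your route.} You stay inside $\Rg$ and apply the $\Lambda$-algebra map $\epsilon=\mathrm{id}\otimes(x_i\mapsto 0)$. This needs two facts.
\begin{enumerate}
\item $\epsilon(\bschub_w)=F_w$, which you import from the Lam--Lee--Shimozono coproduct formula.
\item $\epsilon(\sem_k(\ldots,x_{i-1},x_i))=e_k$ for every $i\in\Z$, which your generating-function computation correctly establishes.
\end{enumerate}
Your worry about normalization is unfounded. The paper defines $\bsem{\alpha}$ exactly as the product $\prod_i \sem_{\alpha_i}(\ldots,x_{i-1},x_i)$, so your plethystic expansion is simply how each factor sits in $\Lambda^-\otimes\Q[x_i\mid i\in\Z]$.

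\textbf{What each approach buys.}
\begin{itemize}
\item Your version is limit-free.
\item It treats all $w\in S_\Z$ uniformly, because $\epsilon\circ\gamma=\epsilon$ (check it on $p_k(x_{\le 0})$ and on the $x_i$). The paper only works out $S_\infty$ and leaves the shift to $S_\Z$ implicit.
\item It yields the CHM companion by the same computation, since $\epsilon(\chm_k(\ldots,x_{i-1},x_i))=h_k$.
\item Its cost is dependence on a substantial external theorem. You could avoid that: verify $\epsilon(\bschub_w)=F_w$ directly by setting $x_i=0$ for $i>-N$ with $N$ large, then comparing with the limit definition of $F_w$.
\item The paper's route is self-contained given its Stabilization Theorem. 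Its only delicate point is interchanging the limit with the sum, which is harmless because the support of the expansion is a fixed finite set.
\end{itemize}
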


Here $\Orb(\lambda)$ is the set of tuples which can be arranged to obtain $\lambda$, and $[e_\lambda]F_w$ is the coefficient of $e_\lambda$ in $F_w$. Thus, the coefficient $[e_\lambda]F_w$ may be obtained by forgetting the positional information in $\CC$ and summing over all back stable SEM coefficients whose indices lie in the same orbit. In this sense, the back stable SEM expansion retains finer information than the elementary expansion of the corresponding Stanley symmetric function. In Section 6, we revisit the \emph{shifted specialization polynomial} $\T_w(t) = \schub_{1^t \times w}(1)$. It counts the reduced pipe dreams of $1^t \times w$. We reformulate this polynomial in terms of the back stable SEM coefficients, and use it to prove a connection between back-stable SEM coefficients and reduced words.

The work is structured as follows. Section 2 gives an overview of SEMs, CHMs, and back stable Schubert polynomials. Section 3 introduces the back stable SEMs and CHMs, proves their respective basis theorems, and shows that they encode stabilization. Section 4 develops Dynkin reversal duality. Section 5 relates the back stable coefficients to Stanley symmetric functions. Section 6 revisits the shifted specialization polynomial from the back stable perspective. Section 7 computes the back stable SEM expansion of certain families of permutations. Section 8 gives a further direction for study.

\subsection{Acknowledgments}

We thank Dora Woodruff for guidance and helpful discussion throughout the project. We also thank Son Nguyen for interesting conversations. 

\section{Background}

\subsection{Schubert Polynomials and Divided Difference Operators}

We define Schubert polynomials using divided difference operators. For any polynomial $f\in\Q[x_1,x_2,\ldots]$, set
\[ \partial_i (f) = \frac{f-s_i f}{x_i-x_{i+1}}, \]
where $s_i$ is the transposition $(i, i+1)$, with the convention that
\[ s_i f(x_1, x_2, \ldots, x_n ) = f(x_1, x_2, \ldots, x_{i+1}, x_{i}, \ldots, x_n ). \]
If $w_0 \in S_n$ is the longest permutation, we define $\schub_{w_0} = x_1^{n-1}x_2^{n-2} \cdots x_{n-1}^1$. The remaining Schubert polynomials are recursively defined by $\partial_i (\schub_w) = \schub_{ws_i}$ if $ws_i <_R w$, where $<_R$ denotes the weak Bruhat order. The resulting family of Schubert polynomials has fundamental algebraic and combinatorial properties.

\begin{theorem}
    $\{ \schub_w : w \in S_\infty \}$ forms a basis of $\Q[x_1,x_2, \ldots]$.
\end{theorem}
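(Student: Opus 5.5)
The plan is the standard triangularity argument. First, well-definedness: for any $w\in S_\infty$ pick $n$ with $w\in S_n$. The descending chain $w_0=w_0^{(n)} \to \cdots \to w$ is obtained by successively multiplying on the right by simple transpositions $s_i$ that decrease length, so $\schub_w=\partial_{i_k}\cdots\partial_{i_1}\schub_{w_0}$. I will invoke (or quickly recall, via the braid relations $\partial_i^2=0$, $\partial_i\partial_{i+1}\partial_i=\partial_{i+1}\partial_i\partial_{i+1}$, $\partial_i\partial_j=\partial_j\partial_i$ for $|i-j|>1$) that this is independent of the reduced word chosen. I will also invoke the stability fact that $\schub_w$ computed inside $S_n$ agrees with that computed inside $S_{n+1}$; the standard check is $\partial_n\partial_{n-1}\cdots\partial_1$ applied to $x_1^{n}\cdots x_n^1$ giving $x_1^{n-1}\cdots x_{n-1}$. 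Hence $\schub_w$ is a homogeneous polynomial of degree $\ell(w)$ in $x_1,\ldots,x_{n-1}$.

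Second, the key leading-term step: show $\schub_w = x^{c(w)} + (\text{other monomials})$, where $c(w)=(c_1,c_2,\ldots)$ is the Lehmer code, $c_i=\#\{j>i: w(j)<w(i)\}$, and all other monomials are strictly smaller than $x^{c(w)}$ in a suitable term order (e.g.\ reverse lexicographic). I would prove this by downward induction from $w_0$, whose code is $(n-1,\ldots,1)$. When $ws_i<_R w$, apply $\partial_i$ and track how it acts on the leading monomial: $\partial_i(x_i^a x_{i+1}^b\cdot m)$ with $a>b$ has leading part $x_i^{a-1}x_{i+1}^{b}m$. One must also check that the code of $ws_i$ is obtained from $c(w)$ by $(c_i,c_{i+1})\mapsto(c_{i+1},c_i-1)$. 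An alternative route that avoids delicate term-order bookkeeping is to cite the pipe-dream or Billey--Jockusch--Stanley formula, which gives nonnegative monomial expansions whose bottom pipe dream realizes $x^{c(w)}$ with coefficient one. I expect this to be the main obstacle: choosing the order so that $\partial_i$ provably preserves ``leading term'' status.

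Third, conclude. The map $w\mapsto c(w)$ is a bijection from $S_\infty$ onto finitely supported sequences of nonnegative integers, i.e.\ onto all monomials of $\Q[x_1,x_2,\ldots]$. Restrict to the finite-dimensional space of polynomials of degree $d$ in $x_1,\ldots,x_{n}$. The Schubert polynomials with codes supported in $[1,n]$ and $|c|=d$ are unitriangular with respect to the monomial basis of that space (by the leading term result, after checking that code support in $[1,n]$ forces the variables used to lie in $x_1,\ldots,x_n$). Hence these Schubert polynomials form a basis of it. Taking the union over $n$ and $d$ gives spanning of $\Q[x_1,x_2,\ldots]$ and linear independence, since any finite dependence lives in some such finite piece.
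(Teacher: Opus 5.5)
\textbf{The gap is in your second step.} Your first and third steps are fine; the third needs the routine fact that $\schub_w$ involves only $x_1,\ldots,x_n$ when $w$ has no descent beyond $n$. But the whole proof rests on the leading-term claim, and your second step does not establish it.

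First, you track the wrong monomial. For $a=c_i>b=c_{i+1}$, the monomials of $\partial_i(x_i^ax_{i+1}^bm)$ are $x_i^{a-1-k}x_{i+1}^{b+k}m$ for $0\le k\le a-b-1$. The one with exponent $c(ws_i)$ is $x_i^{b}x_{i+1}^{a-1}m$, since your own code update puts $(c_{i+1},c_i-1)$ in positions $i,i+1$. It is not $x_i^{a-1}x_{i+1}^{b}m$. That term is on top only in orders with $x_i>x_{i+1}$, and in such orders the code monomial is not leading at all: $\schub_{132}=x_1+x_2$ has code monomial $x_2$. So the order must have $x_n>\cdots>x_1$ (e.g.\ lex in that variable order).

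Second, and more seriously, tracking the leading monomial says nothing about the other monomials, and $\partial_i$ is not monotone for this order. Suppose $\beta$ agrees with $c=c(w)$ in all positions beyond $i+1$, with $\beta_{i+1}<c_{i+1}$ and $\beta_i>c_i$. Then $x^\beta<x^{c}$. Yet $\partial_i x^\beta$ contains $x_i^{\beta_{i+1}}x_{i+1}^{\beta_i-1}$ (times the untouched variables) with coefficient $1$, and this is strictly larger than $x^{c(ws_i)}$. Your inductive hypothesis ``$x^{c(w)}$ plus smaller terms'' neither excludes such $\beta$ from the monomial support of $\schub_w$ nor forces their contributions to cancel. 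So the induction does not go through as proposed.

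\textbf{The fallback is also incomplete as stated.} The Billey--Jockusch--Stanley formula and the bottom pipe dream show that $x^{c(w)}$ occurs. Unitriangularity also needs coefficient one and, crucially, $\wt(D)<c(w)$ for every other reduced pipe dream $D$; these are extra facts. The standard input is Bergeron--Billey's theorem that every reduced pipe dream of $w$ is reached from the bottom one by ladder moves. Each ladder move sends one cross to a strictly higher row, so it strictly lowers the weight in the order above, which gives both facts at once. This is essentially all the paper offers here. It does not prove the theorem; it attributes it to Bergeron--Billey and states without proof that $x^{L(w)}$ is the lexicographically largest monomial, coming from the bottom pipe dream.

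\textbf{A route that avoids term orders.} Your first step already suffices:
\begin{itemize}
\item When $ws_i>w$, one has $\schub_w=\partial_i\schub_{ws_i}$, so $\partial_i\schub_w=0$.
\item Hence, for $\ell(u)=\ell(w)$ and $\partial_u=\partial_{a_1}\cdots\partial_{a_\ell}$ along a reduced word of $u$, $\partial_u\schub_w$ is $1$ if $u=w$ and $0$ otherwise. This gives linear independence.
\item The $\schub_w$ with $w\in S_n$ lie in $H_n=\operatorname{span}\{x^\alpha:\alpha_k\le n-k\}$. This space is stable under $\partial_1,\ldots,\partial_{n-1}$ and has dimension $n!$, so these $\schub_w$ form a basis of $H_n$.
\item Your stability fact together with $\bigcup_n H_n=\Q[x_1,x_2,\ldots]$ finishes the proof.
\end{itemize}
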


This was first shown by Bergeron and Billey \cite{BergeronBilley}. We may also compute $\schub_w$ using the combinatorial model of reduced pipe dreams.

\begin{definition}
    For any $w \in S_n$, a \emph{pipe dream} of $w$ is an $n \times n$ array, where every cell is filled with a \emph{cross} or an \emph{elbow}. For every $i \in [n]$, the wire emitting from the leftmost edge of row $i$ must terminate on the topmost edge of column $w(i)$.     

\begin{figure}[ht]
\centering
\begin{tikzpicture}[x=0.6pt,y=0.6pt,yscale=-1,xscale=1]
%uncomment if require: \path (0,300); %set diagram left start at 0, and has height of 300

%Shape: Rectangle [id:dp16026443377124444] 
\draw  [line width=2.25]  (100,90) -- (140,90) -- (140,250) -- (100,250) -- cycle ;
%Shape: Rectangle [id:dp8484218151919661] 
\draw  [line width=2.25]  (100,90) -- (180,90) -- (180,210) -- (100,210) -- cycle ;
%Shape: Rectangle [id:dp18711793759616957] 
\draw  [line width=2.25]  (100,90) -- (220,90) -- (220,170) -- (100,170) -- cycle ;
%Shape: Rectangle [id:dp4897937684903979] 
\draw  [line width=2.25]  (100,90) -- (260,90) -- (260,130) -- (100,130) -- cycle ;
%Straight Lines [id:da021175605456670632] 
\draw [color={rgb, 255:red, 255; green, 0; blue, 0 }  ,draw opacity=1 ][line width=3]    (100,230) -- (110,230) ;
%Straight Lines [id:da9044199649782406] 
\draw [color={rgb, 255:red, 255; green, 0; blue, 0 }  ,draw opacity=1 ][line width=3]    (120,170) -- (120,210) ;
%Straight Lines [id:da2702874674335627] 
\draw [color={rgb, 255:red, 250; green, 0; blue, 0 }  ,draw opacity=1 ][line width=3]    (100,190) -- (140,190) ;
%Shape: Arc [id:dp86216353768473] 
\draw  [draw opacity=0][line width=3]  (120,220) .. controls (120,220) and (120,220) .. (120,220) .. controls (120,220) and (120,220) .. (120,220) .. controls (120,225.52) and (115.52,230) .. (110,230) -- (110,220) -- cycle ; \draw  [color={rgb, 255:red, 255; green, 0; blue, 0 }  ,draw opacity=1 ][line width=3]  (120,220) .. controls (120,220) and (120,220) .. (120,220) .. controls (120,220) and (120,220) .. (120,220) .. controls (120,225.52) and (115.52,230) .. (110,230) ;  
%Straight Lines [id:da43060104670215016] 
\draw [color={rgb, 255:red, 255; green, 0; blue, 0 }  ,draw opacity=1 ][fill={rgb, 255:red, 255; green, 0; blue, 0 }  ,fill opacity=1 ][line width=3]    (120,210) -- (120,220) ;
%Straight Lines [id:da9762531924655603] 
\draw [color={rgb, 255:red, 255; green, 0; blue, 0 }  ,draw opacity=1 ][line width=3]    (180,150) -- (190,150) ;
%Shape: Arc [id:dp7831445612732518] 
\draw  [draw opacity=0][line width=3]  (200,140) .. controls (200,140) and (200,140) .. (200,140) .. controls (200,140) and (200,140) .. (200,140) .. controls (200,145.52) and (195.52,150) .. (190,150) -- (190,140) -- cycle ; \draw  [color={rgb, 255:red, 255; green, 0; blue, 0 }  ,draw opacity=1 ][line width=3]  (200,140) .. controls (200,140) and (200,140) .. (200,140) .. controls (200,140) and (200,140) .. (200,140) .. controls (200,145.52) and (195.52,150) .. (190,150) ;  
%Straight Lines [id:da40564402754228346] 
\draw [color={rgb, 255:red, 255; green, 0; blue, 0 }  ,draw opacity=1 ][fill={rgb, 255:red, 255; green, 0; blue, 0 }  ,fill opacity=1 ][line width=3]    (200,130) -- (200,140) ;
%Straight Lines [id:da9860310871348288] 
\draw [color={rgb, 255:red, 255; green, 0; blue, 0 }  ,draw opacity=1 ][line width=3]    (140,190) -- (150,190) ;
%Shape: Arc [id:dp2560143519567194] 
\draw  [draw opacity=0][line width=3]  (160,180) .. controls (160,180) and (160,180) .. (160,180) .. controls (160,180) and (160,180) .. (160,180) .. controls (160,185.52) and (155.52,190) .. (150,190) -- (150,180) -- cycle ; \draw  [color={rgb, 255:red, 255; green, 0; blue, 0 }  ,draw opacity=1 ][line width=3]  (160,180) .. controls (160,180) and (160,180) .. (160,180) .. controls (160,180) and (160,180) .. (160,180) .. controls (160,185.52) and (155.52,190) .. (150,190) ;  
%Straight Lines [id:da806459264709423] 
\draw [color={rgb, 255:red, 255; green, 0; blue, 0 }  ,draw opacity=1 ][fill={rgb, 255:red, 255; green, 0; blue, 0 }  ,fill opacity=1 ][line width=3]    (160,170) -- (160,180) ;
%Straight Lines [id:da36196287065848376] 
\draw [color={rgb, 255:red, 255; green, 0; blue, 0 }  ,draw opacity=1 ][line width=3]    (220,110) -- (230,110) ;
%Shape: Arc [id:dp4488873880925842] 
\draw  [draw opacity=0][line width=3]  (240,100) .. controls (240,100) and (240,100) .. (240,100) .. controls (240,105.52) and (235.52,110) .. (230,110) -- (230,100) -- cycle ; \draw  [color={rgb, 255:red, 255; green, 0; blue, 0 }  ,draw opacity=1 ][line width=3]  (240,100) .. controls (240,100) and (240,100) .. (240,100) .. controls (240,105.52) and (235.52,110) .. (230,110) ;  
%Straight Lines [id:da7056330034052173] 
\draw [color={rgb, 255:red, 255; green, 0; blue, 0 }  ,draw opacity=1 ][fill={rgb, 255:red, 255; green, 0; blue, 0 }  ,fill opacity=1 ][line width=3]    (240,90) -- (240,100) ;
%Straight Lines [id:da501217301380931] 
\draw [line width=2.25]    (100,90) -- (100,250) ;
%Straight Lines [id:da2339193724542734] 
\draw [line width=2.25]    (100,90) -- (260,90) ;
%Shape: Rectangle [id:dp8240358471912207] 
\draw  [line width=2.25]  (380,90) -- (420,90) -- (420,250) -- (380,250) -- cycle ;
%Shape: Rectangle [id:dp2767455137713142] 
\draw  [line width=2.25]  (380,90) -- (460,90) -- (460,210) -- (380,210) -- cycle ;
%Shape: Rectangle [id:dp2528266253823669] 
\draw  [line width=2.25]  (380,90) -- (500,90) -- (500,170) -- (380,170) -- cycle ;
%Shape: Rectangle [id:dp8589618088527645] 
\draw  [line width=2.25]  (380,90) -- (540,90) -- (540,130) -- (380,130) -- cycle ;
%Straight Lines [id:da9310565557365451] 
\draw [color={rgb, 255:red, 255; green, 0; blue, 0 }  ,draw opacity=1 ][line width=3]    (380,230) -- (390,230) ;
%Shape: Arc [id:dp34914051234589016] 
\draw  [draw opacity=0][line width=3]  (400,220) .. controls (400,220) and (400,220) .. (400,220) .. controls (400,220) and (400,220) .. (400,220) .. controls (400,225.52) and (395.52,230) .. (390,230) -- (390,220) -- cycle ; \draw  [color={rgb, 255:red, 255; green, 0; blue, 0 }  ,draw opacity=1 ][line width=3]  (400,220) .. controls (400,220) and (400,220) .. (400,220) .. controls (400,220) and (400,220) .. (400,220) .. controls (400,225.52) and (395.52,230) .. (390,230) ;  
%Straight Lines [id:da743710849693085] 
\draw [color={rgb, 255:red, 255; green, 0; blue, 0 }  ,draw opacity=1 ][fill={rgb, 255:red, 255; green, 0; blue, 0 }  ,fill opacity=1 ][line width=3]    (400,210) -- (400,220) ;
%Straight Lines [id:da3784741417313435] 
\draw [color={rgb, 255:red, 255; green, 0; blue, 0 }  ,draw opacity=1 ][line width=3]    (460,150) -- (470,150) ;
%Shape: Arc [id:dp8116841398649731] 
\draw  [draw opacity=0][line width=3]  (480,140) .. controls (480,140) and (480,140) .. (480,140) .. controls (480,140) and (480,140) .. (480,140) .. controls (480,145.52) and (475.52,150) .. (470,150) -- (470,140) -- cycle ; \draw  [color={rgb, 255:red, 255; green, 0; blue, 0 }  ,draw opacity=1 ][line width=3]  (480,140) .. controls (480,140) and (480,140) .. (480,140) .. controls (480,140) and (480,140) .. (480,140) .. controls (480,145.52) and (475.52,150) .. (470,150) ;  
%Straight Lines [id:da5226382579236267] 
\draw [color={rgb, 255:red, 255; green, 0; blue, 0 }  ,draw opacity=1 ][fill={rgb, 255:red, 255; green, 0; blue, 0 }  ,fill opacity=1 ][line width=3]    (480,130) -- (480,140) ;
%Straight Lines [id:da06970196238130288] 
\draw [color={rgb, 255:red, 255; green, 0; blue, 0 }  ,draw opacity=1 ][line width=3]    (420,190) -- (430,190) ;
%Shape: Arc [id:dp22854897835470378] 
\draw  [draw opacity=0][line width=3]  (440,180) .. controls (440,180) and (440,180) .. (440,180) .. controls (440,180) and (440,180) .. (440,180) .. controls (440,185.52) and (435.52,190) .. (430,190) -- (430,180) -- cycle ; \draw  [color={rgb, 255:red, 255; green, 0; blue, 0 }  ,draw opacity=1 ][line width=3]  (440,180) .. controls (440,180) and (440,180) .. (440,180) .. controls (440,180) and (440,180) .. (440,180) .. controls (440,185.52) and (435.52,190) .. (430,190) ;  
%Straight Lines [id:da5911699353029283] 
\draw [color={rgb, 255:red, 255; green, 0; blue, 0 }  ,draw opacity=1 ][fill={rgb, 255:red, 255; green, 0; blue, 0 }  ,fill opacity=1 ][line width=3]    (440,170) -- (440,180) ;
%Straight Lines [id:da440399373853964] 
\draw [color={rgb, 255:red, 255; green, 0; blue, 0 }  ,draw opacity=1 ][line width=3]    (500,110) -- (510,110) ;
%Shape: Arc [id:dp13747435713404044] 
\draw  [draw opacity=0][line width=3]  (520,100) .. controls (520,100) and (520,100) .. (520,100) .. controls (520,105.52) and (515.52,110) .. (510,110) -- (510,100) -- cycle ; \draw  [color={rgb, 255:red, 255; green, 0; blue, 0 }  ,draw opacity=1 ][line width=3]  (520,100) .. controls (520,100) and (520,100) .. (520,100) .. controls (520,105.52) and (515.52,110) .. (510,110) ;  
%Straight Lines [id:da6584309724333856] 
\draw [color={rgb, 255:red, 255; green, 0; blue, 0 }  ,draw opacity=1 ][fill={rgb, 255:red, 255; green, 0; blue, 0 }  ,fill opacity=1 ][line width=3]    (520,90) -- (520,100) ;
%Straight Lines [id:da7427281695135726] 
\draw [line width=2.25]    (380,90) -- (380,250) ;
%Straight Lines [id:da5662062066972827] 
\draw [line width=2.25]    (380,90) -- (540,90) ;
%Straight Lines [id:da8085467258945863] 
\draw [color={rgb, 255:red, 250; green, 0; blue, 0 }  ,draw opacity=1 ][line width=3]    (140,150) -- (180,150) ;
%Straight Lines [id:da33070619553817027] 
\draw [color={rgb, 255:red, 255; green, 0; blue, 0 }  ,draw opacity=1 ][line width=3]    (160,130) -- (160,170) ;
%Straight Lines [id:da20113756578220499] 
\draw [color={rgb, 255:red, 250; green, 0; blue, 0 }  ,draw opacity=1 ][line width=3]    (100,110) -- (140,110) ;
%Straight Lines [id:da18367366566987364] 
\draw [color={rgb, 255:red, 255; green, 0; blue, 0 }  ,draw opacity=1 ][line width=3]    (120,90) -- (120,130) ;
%Straight Lines [id:da8266934396035321] 
\draw [color={rgb, 255:red, 255; green, 0; blue, 0 }  ,draw opacity=1 ][line width=3]    (100,150) -- (110,150) ;
%Shape: Arc [id:dp7167801784804237] 
\draw  [draw opacity=0][line width=3]  (120,140) .. controls (120,140) and (120,140) .. (120,140) .. controls (120,140) and (120,140) .. (120,140) .. controls (120,145.52) and (115.52,150) .. (110,150) -- (110,140) -- cycle ; \draw  [color={rgb, 255:red, 255; green, 0; blue, 0 }  ,draw opacity=1 ][line width=3]  (120,140) .. controls (120,140) and (120,140) .. (120,140) .. controls (120,140) and (120,140) .. (120,140) .. controls (120,145.52) and (115.52,150) .. (110,150) ;  
%Straight Lines [id:da023109141327227745] 
\draw [color={rgb, 255:red, 255; green, 0; blue, 0 }  ,draw opacity=1 ][fill={rgb, 255:red, 255; green, 0; blue, 0 }  ,fill opacity=1 ][line width=3]    (120,130) -- (120,140) ;
%Straight Lines [id:da40223385767337316] 
\draw [color={rgb, 255:red, 255; green, 0; blue, 0 }  ,draw opacity=1 ][line width=3]    (140,150) -- (130,150) ;
%Shape: Arc [id:dp6719884376977818] 
\draw  [draw opacity=0][line width=3]  (120,160) .. controls (120,160) and (120,160) .. (120,160) .. controls (120,160) and (120,160) .. (120,160) .. controls (120,154.48) and (124.48,150) .. (130,150) -- (130,160) -- cycle ; \draw  [color={rgb, 255:red, 255; green, 0; blue, 0 }  ,draw opacity=1 ][line width=3]  (120,160) .. controls (120,160) and (120,160) .. (120,160) .. controls (120,160) and (120,160) .. (120,160) .. controls (120,154.48) and (124.48,150) .. (130,150) ;  
%Straight Lines [id:da42288587896398455] 
\draw [color={rgb, 255:red, 255; green, 0; blue, 0 }  ,draw opacity=1 ][fill={rgb, 255:red, 255; green, 0; blue, 0 }  ,fill opacity=1 ][line width=3]    (120,170) -- (120,160) ;
%Straight Lines [id:da23187439203482352] 
\draw [color={rgb, 255:red, 255; green, 0; blue, 0 }  ,draw opacity=1 ][line width=3]    (180,110) -- (190,110) ;
%Shape: Arc [id:dp9294367944666487] 
\draw  [draw opacity=0][line width=3]  (200,100) .. controls (200,100) and (200,100) .. (200,100) .. controls (200,105.52) and (195.52,110) .. (190,110) -- (190,100) -- cycle ; \draw  [color={rgb, 255:red, 255; green, 0; blue, 0 }  ,draw opacity=1 ][line width=3]  (200,100) .. controls (200,100) and (200,100) .. (200,100) .. controls (200,105.52) and (195.52,110) .. (190,110) ;  
%Straight Lines [id:da05009222454786233] 
\draw [color={rgb, 255:red, 255; green, 0; blue, 0 }  ,draw opacity=1 ][fill={rgb, 255:red, 255; green, 0; blue, 0 }  ,fill opacity=1 ][line width=3]    (200,90) -- (200,100) ;
%Straight Lines [id:da5387225610764496] 
\draw [color={rgb, 255:red, 255; green, 0; blue, 0 }  ,draw opacity=1 ][line width=3]    (220,110) -- (210,110) ;
%Shape: Arc [id:dp7724210396814105] 
\draw  [draw opacity=0][line width=3]  (200,120) .. controls (200,120) and (200,120) .. (200,120) .. controls (200,114.48) and (204.48,110) .. (210,110) -- (210,120) -- cycle ; \draw  [color={rgb, 255:red, 255; green, 0; blue, 0 }  ,draw opacity=1 ][line width=3]  (200,120) .. controls (200,120) and (200,120) .. (200,120) .. controls (200,114.48) and (204.48,110) .. (210,110) ;  
%Straight Lines [id:da40347824445968794] 
\draw [color={rgb, 255:red, 255; green, 0; blue, 0 }  ,draw opacity=1 ][fill={rgb, 255:red, 255; green, 0; blue, 0 }  ,fill opacity=1 ][line width=3]    (200,130) -- (200,120) ;
%Straight Lines [id:da5430327016658052] 
\draw [color={rgb, 255:red, 250; green, 0; blue, 0 }  ,draw opacity=1 ][line width=3]    (140,110) -- (180,110) ;
%Straight Lines [id:da586594127646262] 
\draw [color={rgb, 255:red, 255; green, 0; blue, 0 }  ,draw opacity=1 ][line width=3]    (160,90) -- (160,130) ;
%Straight Lines [id:da22614592752138685] 
\draw [color={rgb, 255:red, 250; green, 0; blue, 0 }  ,draw opacity=1 ][line width=3]    (380,110) -- (420,110) ;
%Straight Lines [id:da7959183535295339] 
\draw [color={rgb, 255:red, 255; green, 0; blue, 0 }  ,draw opacity=1 ][line width=3]    (400,90) -- (400,130) ;
%Straight Lines [id:da4252416056142292] 
\draw [color={rgb, 255:red, 250; green, 0; blue, 0 }  ,draw opacity=1 ][line width=3]    (420,110) -- (460,110) ;
%Straight Lines [id:da4409494538120303] 
\draw [color={rgb, 255:red, 255; green, 0; blue, 0 }  ,draw opacity=1 ][line width=3]    (440,90) -- (440,130) ;
%Straight Lines [id:da4827601657160071] 
\draw [color={rgb, 255:red, 255; green, 0; blue, 0 }  ,draw opacity=1 ][line width=3]    (380,190) -- (390,190) ;
%Shape: Arc [id:dp3552476756762152] 
\draw  [draw opacity=0][line width=3]  (400,180) .. controls (400,180) and (400,180) .. (400,180) .. controls (400,180) and (400,180) .. (400,180) .. controls (400,185.52) and (395.52,190) .. (390,190) -- (390,180) -- cycle ; \draw  [color={rgb, 255:red, 255; green, 0; blue, 0 }  ,draw opacity=1 ][line width=3]  (400,180) .. controls (400,180) and (400,180) .. (400,180) .. controls (400,180) and (400,180) .. (400,180) .. controls (400,185.52) and (395.52,190) .. (390,190) ;  
%Straight Lines [id:da2752519752771184] 
\draw [color={rgb, 255:red, 255; green, 0; blue, 0 }  ,draw opacity=1 ][fill={rgb, 255:red, 255; green, 0; blue, 0 }  ,fill opacity=1 ][line width=3]    (400,170) -- (400,180) ;
%Straight Lines [id:da41432543089875296] 
\draw [color={rgb, 255:red, 255; green, 0; blue, 0 }  ,draw opacity=1 ][line width=3]    (420,190) -- (410,190) ;
%Shape: Arc [id:dp9973652225613536] 
\draw  [draw opacity=0][line width=3]  (400,200) .. controls (400,200) and (400,200) .. (400,200) .. controls (400,200) and (400,200) .. (400,200) .. controls (400,194.48) and (404.48,190) .. (410,190) -- (410,200) -- cycle ; \draw  [color={rgb, 255:red, 255; green, 0; blue, 0 }  ,draw opacity=1 ][line width=3]  (400,200) .. controls (400,200) and (400,200) .. (400,200) .. controls (400,200) and (400,200) .. (400,200) .. controls (400,194.48) and (404.48,190) .. (410,190) ;  
%Straight Lines [id:da08757286070096726] 
\draw [color={rgb, 255:red, 255; green, 0; blue, 0 }  ,draw opacity=1 ][fill={rgb, 255:red, 255; green, 0; blue, 0 }  ,fill opacity=1 ][line width=3]    (400,210) -- (400,200) ;
%Straight Lines [id:da7868285832380267] 
\draw [color={rgb, 255:red, 255; green, 0; blue, 0 }  ,draw opacity=1 ][line width=3]    (380,150) -- (390,150) ;
%Shape: Arc [id:dp14648492121101986] 
\draw  [draw opacity=0][line width=3]  (400,140) .. controls (400,140) and (400,140) .. (400,140) .. controls (400,140) and (400,140) .. (400,140) .. controls (400,145.52) and (395.52,150) .. (390,150) -- (390,140) -- cycle ; \draw  [color={rgb, 255:red, 255; green, 0; blue, 0 }  ,draw opacity=1 ][line width=3]  (400,140) .. controls (400,140) and (400,140) .. (400,140) .. controls (400,140) and (400,140) .. (400,140) .. controls (400,145.52) and (395.52,150) .. (390,150) ;  
%Straight Lines [id:da7625718270779194] 
\draw [color={rgb, 255:red, 255; green, 0; blue, 0 }  ,draw opacity=1 ][fill={rgb, 255:red, 255; green, 0; blue, 0 }  ,fill opacity=1 ][line width=3]    (400,130) -- (400,140) ;
%Straight Lines [id:da9917417542930363] 
\draw [color={rgb, 255:red, 255; green, 0; blue, 0 }  ,draw opacity=1 ][line width=3]    (420,150) -- (410,150) ;
%Shape: Arc [id:dp09405753668614614] 
\draw  [draw opacity=0][line width=3]  (400,160) .. controls (400,160) and (400,160) .. (400,160) .. controls (400,160) and (400,160) .. (400,160) .. controls (400,154.48) and (404.48,150) .. (410,150) -- (410,160) -- cycle ; \draw  [color={rgb, 255:red, 255; green, 0; blue, 0 }  ,draw opacity=1 ][line width=3]  (400,160) .. controls (400,160) and (400,160) .. (400,160) .. controls (400,160) and (400,160) .. (400,160) .. controls (400,154.48) and (404.48,150) .. (410,150) ;  
%Straight Lines [id:da05159923276451983] 
\draw [color={rgb, 255:red, 255; green, 0; blue, 0 }  ,draw opacity=1 ][fill={rgb, 255:red, 255; green, 0; blue, 0 }  ,fill opacity=1 ][line width=3]    (400,170) -- (400,160) ;
%Straight Lines [id:da0689803504146439] 
\draw [color={rgb, 255:red, 255; green, 0; blue, 0 }  ,draw opacity=1 ][line width=3]    (420,150) -- (430,150) ;
%Shape: Arc [id:dp331092310378841] 
\draw  [draw opacity=0][line width=3]  (440,140) .. controls (440,140) and (440,140) .. (440,140) .. controls (440,140) and (440,140) .. (440,140) .. controls (440,145.52) and (435.52,150) .. (430,150) -- (430,140) -- cycle ; \draw  [color={rgb, 255:red, 255; green, 0; blue, 0 }  ,draw opacity=1 ][line width=3]  (440,140) .. controls (440,140) and (440,140) .. (440,140) .. controls (440,140) and (440,140) .. (440,140) .. controls (440,145.52) and (435.52,150) .. (430,150) ;  
%Straight Lines [id:da11144613311689366] 
\draw [color={rgb, 255:red, 255; green, 0; blue, 0 }  ,draw opacity=1 ][fill={rgb, 255:red, 255; green, 0; blue, 0 }  ,fill opacity=1 ][line width=3]    (440,130) -- (440,140) ;
%Straight Lines [id:da4274300765308937] 
\draw [color={rgb, 255:red, 255; green, 0; blue, 0 }  ,draw opacity=1 ][line width=3]    (460,150) -- (450,150) ;
%Shape: Arc [id:dp520891191245625] 
\draw  [draw opacity=0][line width=3]  (440,160) .. controls (440,160) and (440,160) .. (440,160) .. controls (440,160) and (440,160) .. (440,160) .. controls (440,154.48) and (444.48,150) .. (450,150) -- (450,160) -- cycle ; \draw  [color={rgb, 255:red, 255; green, 0; blue, 0 }  ,draw opacity=1 ][line width=3]  (440,160) .. controls (440,160) and (440,160) .. (440,160) .. controls (440,160) and (440,160) .. (440,160) .. controls (440,154.48) and (444.48,150) .. (450,150) ;  
%Straight Lines [id:da6679303396074775] 
\draw [color={rgb, 255:red, 255; green, 0; blue, 0 }  ,draw opacity=1 ][fill={rgb, 255:red, 255; green, 0; blue, 0 }  ,fill opacity=1 ][line width=3]    (440,170) -- (440,160) ;
%Straight Lines [id:da8533933341033813] 
\draw [color={rgb, 255:red, 255; green, 0; blue, 0 }  ,draw opacity=1 ][line width=3]    (460,110) -- (470,110) ;
%Shape: Arc [id:dp35138772164279086] 
\draw  [draw opacity=0][line width=3]  (480,100) .. controls (480,100) and (480,100) .. (480,100) .. controls (480,105.52) and (475.52,110) .. (470,110) -- (470,100) -- cycle ; \draw  [color={rgb, 255:red, 255; green, 0; blue, 0 }  ,draw opacity=1 ][line width=3]  (480,100) .. controls (480,100) and (480,100) .. (480,100) .. controls (480,105.52) and (475.52,110) .. (470,110) ;  
%Straight Lines [id:da3472201703688663] 
\draw [color={rgb, 255:red, 255; green, 0; blue, 0 }  ,draw opacity=1 ][fill={rgb, 255:red, 255; green, 0; blue, 0 }  ,fill opacity=1 ][line width=3]    (480,90) -- (480,100) ;
%Straight Lines [id:da11681640230766077] 
\draw [color={rgb, 255:red, 255; green, 0; blue, 0 }  ,draw opacity=1 ][line width=3]    (500,110) -- (490,110) ;
%Shape: Arc [id:dp6782580796342749] 
\draw  [draw opacity=0][line width=3]  (480,120) .. controls (480,120) and (480,120) .. (480,120) .. controls (480,114.48) and (484.48,110) .. (490,110) -- (490,120) -- cycle ; \draw  [color={rgb, 255:red, 255; green, 0; blue, 0 }  ,draw opacity=1 ][line width=3]  (480,120) .. controls (480,120) and (480,120) .. (480,120) .. controls (480,114.48) and (484.48,110) .. (490,110) ;  
%Straight Lines [id:da550321730153722] 
\draw [color={rgb, 255:red, 255; green, 0; blue, 0 }  ,draw opacity=1 ][fill={rgb, 255:red, 255; green, 0; blue, 0 }  ,fill opacity=1 ][line width=3]    (480,130) -- (480,120) ;
%Straight Lines [id:da2565451348098954] 
\draw [line width=2.25]    (100,90) -- (100,250) ;
%Straight Lines [id:da17709214313610855] 
\draw [line width=2.25]    (100,90) -- (260,90) ;
%Straight Lines [id:da316482204568055] 
\draw [line width=2.25]    (380,90) -- (540,90) ;
%Straight Lines [id:da7959579449792387] 
\draw [line width=2.25]    (380,90) -- (380,250) ;
\end{tikzpicture}
\caption{Two pipe dreams for $3124$.}
\end{figure}

    A pipe dream is \emph{reduced} if no pair of wires cross more than once. In the above figure, the right pipe dream is reduced while the left is not. For any $w \in S_\infty$ let $\RP(w)$ be the set of reduced pipe dreams of $w$.
\end{definition}

The computation of Schubert polynomials comes from the following fundamental theorem.

\begin{theorem}[{\cite[1.1]{BJS}}]
    Let $w\in S_n$. For any $D \in \RP(w)$, let $\wt(D)$ be the tuple whose $i$th entry is the number of crosses in the $i$th row of $D$. Then
    \[ \schub_w = \sum_{D \in \RP(w)} x^{\wt(D)}. \]
\end{theorem}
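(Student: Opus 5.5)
The plan is to show that the pipe dream generating function $P_w := \sum_{D\in\RP(w)} x^{\wt(D)}$ satisfies the same defining recursion as $\schub_w$. Concretely, we establish two facts: $P_{w_0} = x_1^{n-1}\cdots x_{n-1}$ for the longest element $w_0\in S_n$, and $\partial_i P_w = P_{ws_i}$ whenever $ws_i<_R w$. Every $w\in S_n$ is reached from $w_0$ by a chain of such descents, so these two facts give $P_w=\schub_w$ by induction on $\ell(w_0)-\ell(w)$.

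\textbf{Base case.} A reduced pipe dream of $w$ has exactly $\ell(w)$ crosses, since each pair of wires crosses at most once and exactly the inversions cross. Crosses can only occupy cells $(i,j)$ with $i+j\le n$, because a wire entering row $i$ must exit in some column. For $w_0$ we need $\binom n2$ crosses, which is exactly the number of cells in the staircase $i+j\le n$. Hence the staircase filled with crosses is the unique reduced pipe dream of $w_0$, and its weight is $x_1^{n-1}\cdots x_{n-1}$.

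\textbf{Inductive step.} This is the main obstacle. Fix $i$ and restrict attention to rows $i$ and $i+1$. The operator $\partial_i$ is $\Q[x_j:j\neq i,i+1]$-linear, so it suffices to group the pipe dreams of $w$ by their content outside these two rows.

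The approach I would take is the Bergeron--Billey ladder/chute framework, following the proof of \cite{BJS}. Within rows $i,i+1$, partition $\RP(w)$ into blocks whose two-row weights form strings $x_i^{a}x_{i+1}^{b}\,m,\ x_i^{a-1}x_{i+1}^{b+1}\,m,\ \dots$, with $m$ a monomial in the other variables. The strings are obtained by sliding crosses between the two rows. Applying $\partial_i$ to such a string, we want:
\begin{itemize}
\item blocks whose weight string is symmetric in $x_i,x_{i+1}$ are killed by $\partial_i$;
\item each remaining block is sent to the weights of a block of $\RP(ws_i)$.
\end{itemize}
Concretely, $\partial_i(x_i^{a}x_{i+1}^{b})$ with $a>b$ is the string $\sum x_i^{a-1-k}x_{i+1}^{b+k}$. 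This should match the pipe dreams of $ws_i$ obtained by deleting a distinguished cross in row $i$, namely the cross where the wires ending at columns $w(i)$ and $w(i+1)$ meet.

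The delicate part is defining the cross-sliding moves in rows $i,i+1$ so that reducedness is preserved and the blocks correspond bijectively. To sidestep this, I would first try the alternative route of Fomin--Kirillov. Encode $P_w$ via the nilCoxeter product
\[
\prod_{i}\prod_{j}\bigl(1+x_i u_{i+j-1}\bigr),
\]
and verify that $\partial_i$ acts on this product correctly. That verification uses the Yang--Baxter relation
\[
(1+xu_k)(1+(x+y)u_{k+1})(1+yu_k)=(1+yu_{k+1})(1+(x+y)u_k)(1+xu_{k+1}),
\]
which gives symmetry in $x_i,x_{i+1}$ of the relevant factor. The recursion for $\partial_i$ then follows by extracting the coefficient of $u_w$.

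The expected difficulty is precisely matching the ordering of the factors to the row-by-row reading of pipe dreams. Once that bookkeeping is settled, the base case and the induction combine to give $\schub_w=\sum_{D\in\RP(w)}x^{\wt(D)}$.
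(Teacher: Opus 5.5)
The paper does not prove this statement; it quotes it from Billey--Jockusch--Stanley, whose Theorem 1.1 is phrased via reduced words and compatible sequences. Your outline therefore has to stand on its own, and as written it does not yet.

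Several parts are fine. The reduction to $P_{w_0}=x_1^{n-1}\cdots x_{n-1}$ plus $\partial_iP_w=P_{ws_i}$ works, and so does the base case. (The restriction of crosses to cells with $i+j\le n$ is part of the definition of a pipe dream, not a consequence of the wiring condition.) But the inductive step, which is the whole content of the theorem, is only announced.

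Route 1 never defines its blocks, and a single distinguished deletion cannot be the mechanism. For $w=312$ and $i=1$, the unique pipe dream (weight $x_1^2$) must account for both pipe dreams of $132$, of weights $x_1$ and $x_2$. The second is obtained only by moving a cross from row $1$ to row $2$; making this precise is Knutson--Miller mitosis. Route 2 is the right one, but the two items you defer are exactly the nontrivial lemmas.

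First, set $h_k(x)=1+xu_k$, $A_i(x)=h_{n-1}(x)h_{n-2}(x)\cdots h_i(x)$ and $\Phi=A_1(x_1)\cdots A_{n-1}(x_{n-1})$. The equality $P_w=[u_w]\Phi$ requires a reading-word lemma. Read the crosses right to left within each row, rows top to bottom, with cell $(r,c)$ giving the letter $r+c-1$. You must prove that the product of this word is the wire permutation. Only then are the surviving nilCoxeter terms equal to $u_w$ exactly the reduced pipe dreams of $w$. The paper's reduced pipe dream of $3124$ gives the word $21$, which fixes the convention.

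Second, Yang--Baxter does not make ``the relevant factor'' symmetric. The part of $\Phi$ involving $x_i,x_{i+1}$ is $A_i(x_i)A_{i+1}(x_{i+1})$, which is not symmetric; otherwise $\partial_i\Phi=0$. What is needed is the following.

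(a) The lemma $A_i(x)A_i(y)=A_i(y)A_i(x)$. It is proved by induction on $n-i$ from the Yang--Baxter relation, $h_k(x)h_k(y)=h_k(x+y)$, and commutation of $h_j,h_k$ for $|j-k|\ge 2$.

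(b) The rewriting $A_{i+1}(y)=A_i(y)h_i(-y)$. Using (a) and $u_i^2=0$, this gives
\[ \partial_i\bigl(A_i(x_i)A_{i+1}(x_{i+1})\bigr)=A_i(x_i)A_i(x_{i+1})\,\partial_i(1-x_{i+1}u_i)=A_i(x_i)A_i(x_{i+1})u_i=A_i(x_i)A_{i+1}(x_{i+1})u_i . \]

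(c) $\partial_i$ commutes with left and right multiplication by the factors $A_j(x_j)$, $j\ne i,i+1$, and $u_i$ commutes with $A_j$ for $j\ge i+2$. Together these yield $\partial_i\Phi=\Phi u_i$.

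Taking the coefficient of $u_w$ then gives $\partial_iP_w=P_{ws_i}$ when $ws_i<_Rw$, and $0$ otherwise. With the reading-word lemma and (a)--(c) supplied, your outline becomes the standard Fomin--Stanley nilCoxeter proof; without them the inductive step is unproved.
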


Note that this polynomial is homogeneous of degree $\ell(w)$, as the number of crosses in each reduced pipe dream is precisely $\ell(w)$.

\begin{definition}
    The \emph{Lehmer code} of $w$ is the sequence
    \[ L(w)_i=\#\{j>i:w(j)<w(i)\}. \]
\end{definition}

The lexicographically largest monomial in $\schub_w$ is $x_1^{L(w)_1}x_2^{L(w)_2} \cdots x_n^{L(w)_n}$, which corresponds to the pipe dream in which the first $L(w)_i$ cells of row $i \in [n]$ are filled with crosses and all other cells are filled with elbows. We refer to this pipe dream as the bottom pipe dream.

\subsection{SEMs and CHMs} 

\begin{definition}
    For nonnegative $d$ and positive $k$, let $\sem_d^k=\sem_d(x_1,\ldots,x_k)$ denote the degree-$d$ elementary symmetric polynomial in the first $k$ variables, with $\sem_0^k=1$. A \emph{standard elementary monomial} is a product
    \[ \sem_{\vec a} = \sem_{a_1}^1 \sem_{a_2}^2 \sem_{a_3}^3 \cdots\]
    where $a_i \le i$ for all $i$, and only finitely many of the $a_i$'s are non-zero.
\end{definition}

\begin{theorem}[{\cite[Theorem 3.3]{FGP}}]
    $\{ \sem_{\vec a} \}$ forms a basis of $\Q[x_1,x_2, \ldots]$.
\end{theorem}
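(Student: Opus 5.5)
We show that the family $\{\sem_{\vec a}\}$ spans $\Q[x_1,x_2,\ldots]$ and is linearly independent, working one graded piece at a time in finitely many variables. Fix $n$ and let $P_n=\Q[x_1,\ldots,x_n]$. We will prove the refined statement that
\[ \{\sem_{\vec a} : a_i\le i,\ a_i=0 \text{ for } i\ge n\} \]
is a basis of the $\Q$-span of the monomials $x^{\beta}$ with $\beta_i\le n-i$. This span is the space $V_n$ spanned by the Schubert polynomials $\schub_w$, $w\in S_n$, by Artin's classical result together with the Bergeron--Billey basis theorem quoted above. Taking $n\to\infty$ then gives the theorem, since every polynomial lies in some $V_n$ and the families are nested. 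The two sets have the same size: the number of $\vec a$ with $0\le a_i\le i$ for $i<n$ is $n!=\dim V_n$, which matches $\#\{\beta:\beta_i\le n-i\}$. It therefore suffices to prove spanning, or alternatively a triangularity statement that gives independence.

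We argue by a leading-term (triangularity) computation. Define the map
\[ \vec a\mapsto \beta,\qquad \beta_{n-i}=a_i\ \text{ read suitably}. \]
More concretely, $\sem_{a_k}^k$ has leading monomial $x_1x_2\cdots x_{a_k}$ in lex order with $x_1>x_2>\cdots$. The leading monomial of $\sem_{\vec a}$ is therefore $\prod_k x_1\cdots x_{a_k}$, whose exponent is $\gamma_j=\#\{k: a_k\ge j\}$. Because $a_k\le k$, the variable $x_j$ appears only from factors with $k\ge j$, and so $\gamma_j\le \#\{k<n: k\ge j\}=n-j$. However, the map $\vec a\mapsto\gamma$ is not injective, since $\gamma$ only sees the multiset of the $a_k$. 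Plain lex leading terms are thus insufficient, and this is the main obstacle.

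To handle it, we use the reverse-variable order instead, in which the leading term of $\sem_{a_k}^k$ is $x_{k-a_k+1}\cdots x_k$. The resulting exponent vector determines $\vec a$ by peeling off from the largest $k$. The induction runs on $n$, and we factor through the decomposition
\[ P_n=\bigoplus_{j=0}^{n-1} x_n^{\,j}\,P_{n-1}, \]
which is equivalent to $V_n=\bigoplus_{j<n}\sem_j^{n-1}\cdot V_{n-1}$ modulo lower terms. The key step is to show that
\[ V_n=\sum_{j=0}^{n-1}\sem_j(x_1,\ldots,x_{n-1})\,V_{n-1}. \]
By the dimension count $n\cdot(n-1)!=n!$, it is enough to show that the sum spans. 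This follows from the standard fact that $\Q[x_1,\ldots,x_n]$ is free over $\Q[x_1,\ldots,x_{n-1}]^{S_{n-1}}$-type coinvariant filtrations. Specifically, we use the identity
\[ \prod_{i<n}(1+x_i t)\cdot(1+x_nt)=\prod_{i\le n}(1+x_it) \]
to rewrite $x_n^j$ modulo $\sem(x_1,\ldots,x_n)$-ideal relations as a combination of the polynomials $\sem^{n-1}_{j'}$ with $j'\le j$.

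Verifying that this reduction preserves the degree bound $\beta_i\le n-i$, so that we stay inside $V_n$, is where the care goes. If it proves delicate, we fall back on citing that $V_n$ is a complement to the ideal generated by symmetric functions of positive degree, where the reduction is routine.
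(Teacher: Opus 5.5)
Your reduction is sound as far as it goes. Each $\sem_{\vec a}$ with $a_k=0$ for $k\ge n$ lies in $V_n=\operatorname{span}\{x^\beta:\beta_i\le n-i\}$, there are $n!=\dim V_n$ of them, and nested bases of an exhausting chain give the theorem. But everything then rests on showing that these $n!$ products are linearly independent (or span $V_n$), and the proposal never does this.

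The one concrete mechanism you offer is false. With $x_2>x_1$, the products $\sem^1_1\sem^2_1=x_1^2+x_1x_2$ and $\sem^2_2=x_1x_2$ have the same leading monomial $x_1x_2$, so peeling from the largest $k$ does not recover $\vec a$. In fact no monomial order gives distinct leading monomials already for $n=3$. Separating $\sem^1_1=x_1$ from $\sem^2_1=x_1+x_2$ forces $x_2>x_1$. Separating $\sem^1_1\sem^2_1$ from $\sem^2_2$ forces $x_1^2>x_1x_2$, i.e.\ $x_1>x_2$. So no leading-term triangularity argument of this kind can work.

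Your fallback is a restatement rather than a proof. Given the inductive hypothesis, $V_n=\sum_{j<n}\sem^{n-1}_j V_{n-1}$ is exactly the assertion that the SEMs span $V_n$. The supporting steps also do not hold up:
\begin{itemize}
\item $\Q[x_1,\ldots,x_n]\neq\bigoplus_{j=0}^{n-1}x_n^j\,\Q[x_1,\ldots,x_{n-1}]$.
\item $V_n$ involves no $x_n$ at all, since $\beta_n\le 0$, so rewriting $x_n^j$ is beside the point.
\item A congruence modulo $I_n=(\sem^n_1,\ldots,\sem^n_n)$ does not mesh with an inductive hypothesis that lives modulo $I_{n-1}$, because $I_{n-1}\not\subseteq I_n$. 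For instance, $\sem^{n-1}_1\equiv -x_n$ modulo $I_n$.
\end{itemize}

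What is missing is the stronger free-module statement. Write $\Lambda_k$ for the polynomials in $x_1,\ldots,x_k$ that are symmetric in those variables. The identity $\prod_{i\le k}(1+x_iu)=(1+x_{k+1}u)^{-1}\prod_{i\le k+1}(1+x_iu)$ gives
\[ \sem^k_j=\sum_{i=0}^{j}(-x_{k+1})^{j-i}\,\sem^{k+1}_i. \]
So $(\sem^k_j)_{0\le j\le k}$ is unitriangular over $\Lambda_{k+1}$ against $((-x_{k+1})^j)_{0\le j\le k}$, which is the classical $\Lambda_{k+1}$-basis of $\Lambda_k[x_{k+1}]$. Adjoin the free variables $x_{k+2},\ldots,x_n$ and chain over $k=1,\ldots,n-1$. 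This shows that $\Q[x_1,\ldots,x_n]$ is a free $\Lambda_n$-module on the SEMs with $a_k=0$ for $k\ge n$. In particular these SEMs are $\Q$-linearly independent, and your dimension count and nesting argument then finish the proof.

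The paper itself gives no proof of this statement; it only cites Fomin--Gelfand--Postnikov, so there is no in-paper argument to compare against.
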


Moreover, they showed that the expansion of Schubert polynomials into the SEM basis yields rich combinatorial structure through its connection to quantum Schubert polynomials. Since then, there has been substantial work aimed at understanding the coefficients of the expansion
\[ \schub_w = \sum_{\vec a} \kappa_{\vec a}^w \cdot \sem_{\vec a}. \] 

Winkel \cite{Winkel} gave a determinantal formula for the SEM expansion of Schur polynomials, and observed that the coefficients $\kappa_{\vec a}^w$ tended to be small in absolute value. Hatam, Johnson, Liu, and Macaulay \cite{HJLM} gave a determinantal formula for $\schub_w$ when $w$ avoids a certain set of $13$ patterns. Woodruff \cite{Woodruff} characterized all $w$ for which $\schub_w = \sem_{\vec a}$ for some $\vec a$, which is equivalent to $\kappa_{\vec a}^w \neq 0$ for a unique choice of $\vec a$. A related family of polynomials is given as follows:

\begin{definition}
    For nonnegative $d$ and positive $k$, let $\chm_d^k=\chm_d(x_1,\ldots,x_k)$ denote the degree-$d$ complete homogeneous symmetric polynomial in the first $k$ variables, with $\chm_0^k=1$. A \emph{complete homogeneous monomial} is a product
    \[ \chm_{\vec a} = \chm_{a_1}^1 \chm_{a_2}^2 \chm_{a_3}^3 \cdots\]
    where the $a_i$ are nonnegative integers and only finitely many are nonzero.
\end{definition}

\begin{proposition}
    $\{ \chm_{\vec a} \}$ forms a basis of $\Q[x_1,x_2, \ldots]$.
\end{proposition}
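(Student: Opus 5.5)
The plan is a leading-term (triangularity) argument, parallel to the standard proof that monomials form a basis. Fix $n$ and work in $\Q[x_1,\ldots,x_n]$. Equip its monomials with the lexicographic monomial order in which $x_n > x_{n-1} > \cdots > x_1$: compare exponents of $x_n$ first, then $x_{n-1}$, and so on. This is a genuine monomial order, i.e.\ a total order compatible with multiplication. Consequently the leading monomial of a product is the product of the leading monomials.

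First I identify the leading monomial of a single factor $\chm_a^k$. Every monomial of $\chm_a^k$ is $x^{b}$ with $\supp(b)\subseteq[k]$ and $|b|=a$, each occurring with coefficient $1$. Among these, $x_k^a$ maximizes the exponent of the highest-indexed variable present, so it is the leading monomial. Hence for $\vec a=(a_1,\ldots,a_n,0,\ldots)$ the product $\chm_{\vec a}$ has leading monomial $\prod_k x_k^{a_k}=x^{\vec a}$ with coefficient $1$. Note that $\chm_{\vec a}\in\Q[x_1,\ldots,x_n]$ exactly when $a_k=0$ for $k>n$. The assignment $\vec a\mapsto x^{\vec a}$ is therefore a bijection between the CHMs lying in $\Q[x_1,\ldots,x_n]$ and the monomials of $\Q[x_1,\ldots,x_n]$. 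It also preserves degree, since $\chm_{\vec a}$ is homogeneous of degree $|\vec a|$.

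Next, fix a degree $d$. The degree-$d$ part of $\Q[x_1,\ldots,x_n]$ is finite-dimensional with monomial basis $\{x^{\vec a}: |\vec a|=d,\ \supp(\vec a)\subseteq[n]\}$. Order this basis by the monomial order above. The transition matrix from monomials to $\{\chm_{\vec a}\}$ is then unitriangular: each $\chm_{\vec a}$ equals $x^{\vec a}$ plus strictly smaller monomials of the same degree. Hence it is invertible over $\Z$, and the $\chm_{\vec a}$ of degree $d$ with support in $[n]$ form a basis of that graded piece. Summing over $d$ gives a basis of $\Q[x_1,\ldots,x_n]$.

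Finally, pass to the union over $n$. Any finite linear relation among CHMs involves finitely many $\vec a$, so it lives in some $\Q[x_1,\ldots,x_n]$ and must be trivial. Likewise, any polynomial lies in some $\Q[x_1,\ldots,x_n]$ and is spanned there. The only point requiring care is the choice of monomial order, namely reverse-indexed lex rather than the usual lex with $x_1$ largest. The usual lex order would make $x_1^a$ the leading term of every $\chm_a^k$ and destroy the bijection. With the correct order, the remaining steps are routine.
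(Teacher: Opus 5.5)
Your proposal is correct and follows essentially the same argument as the paper. Both use the lexicographic order with $x_n > \cdots > x_1$, observe that $\chm_a^k$ has leading monomial $x_k^a$ so that $\chm_{\vec a}$ has leading monomial $x^{\vec a}$, conclude unitriangularity against the monomial basis, and then let $n$ vary. Restricting to the finite-dimensional graded pieces is a small extra bit of care that the paper's one-line unitriangularity claim leaves implicit.
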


\begin{proof}
    Fix $n\geq 1$ and order monomials in $\Q[x_1,\ldots,x_n]$ lexicographically with $x_n>\cdots>x_1$. The leading monomial of $\chm_d(x_1,\ldots,x_i)$ is $x_i^d$. Hence the leading monomial of
    \[ \chm_{a_1}^1\chm_{a_2}^2\cdots\chm_{a_n}^n \]
    is $x_1^{a_1}x_2^{a_2}\cdots x_n^{a_n}$. These leading monomials are distinct and exhaust the monomial basis, so the transition matrix is unitriangular. Letting $n$ vary proves the result.
\end{proof}

The CHMs are less studied than the SEMs, but they arise naturally alongside them in the work of Fomin, Gelfand, and Postnikov. In particular, their quantization map sends complete homogeneous symmetric polynomials to determinants of quantum elementary symmetric polynomials, and more generally sends CHMs to products of such determinants. Thus, like SEMs, CHMs admit a particularly tractable description under quantization, motivating their consideration here.

\subsection{Back Stable Schubert Polynomials}

Following Lam, Lee, and Shimozono, let $S_\Z$ denote the permutations of $\Z$ with finite support. Let $\gamma$ be the shift
\[ \gamma(s_i) = s_{i+1}, \qquad \gamma(x_i) = x_{i+1}.\] 
Since every $w \in S_\Z$ has finite support, choose an interval $[p,q]$ for which $w(i)=i$ for all $i \not \in [p,q]$. Define
\[ \schub_{w}^{[p,q]} = \gamma^{p-1} \schub_{\gamma^{1-p}(w)} \]
which may be regarded as the ordinary Schubert polynomial of $w$ after translating its variables to the interval $[p,q]$. The back stable Schubert polynomial is the coefficientwise limit
\[ \bschub_w =\lim_{\substack{p \to -\infty \\ q \to \infty}} \schub^{[p,q]}_w. \]
Since every $w \in S_\Z$ has finite support, for all sufficiently large $m$ the shift $\gamma^m(w) \in S_\infty$. Thus, up to shifting, it usually suffices to only consider $w \in S_\infty$. In these cases, we can use the more natural formulation
\[ \bschub_w = \lim_{m \to \infty} \gamma^{-m}\schub_{1^m \times w}. \]

\begin{example}
Take the simple case of $w = 312$. Note that
\[ \schub_{1^m \times 312} = \sum_{1 \le i \le j \le m+1} x_i x_j. \]
Thus,
\begin{align*}
\bschub_{312} &= \lim_{m \to \infty} \gamma^{-m} \schub_{1^m \times 312} \\
&= \lim_{m \to \infty} \sum_{1-m \le i \le j \le 1} x_i x_j \\
&= \sum_{i \le j \le 1} x_i x_j.
\end{align*}
\end{example}

Evidently, the limit need not lie in $\Q[x_i\mid i \in \Z]$. Lam, Lee, and Shimozono proved that back stable Schubert polynomials lie in the ring $\Rg = \Lambda^- \otimes \Q[x_i\mid i \in \Z]$, where $\Lambda^- = \Lambda(\ldots, x_{-1}, x_0)$ is the ring of symmetric functions on nonpositively indexed variables. The tensor-product notation distinguishes the symmetric-function factor from the polynomial factor. Informally, the potentially infinite dependence on variables $(x_i)_{i \le 0}$ occurs symmetrically. All limits in what follows are coefficientwise limits in this ring. The following fundamental theorem makes $\Rg$ a natural choice for the ambient ring.

\begin{theorem}[Lam--Lee--Shimozono {\cite[3.5]{LLS}}]
    The set $\left \{ \bschub_w : w \in S_\Z \right \}$ forms a basis of $\Rg$.
\end{theorem}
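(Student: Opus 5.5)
Since this is the Lam--Lee--Shimozono basis theorem, quoted from \cite{LLS}, the plan below reconstructs their argument rather than giving a new one. The approach is to realize $\Rg$ as an inverse-limit-compatible object with two ingredients. The first is the polynomial part $\Q[x_i \mid i \in \Z]$. The second is the symmetric part $\Lambda^-$. We then show that the $\bschub_w$ are related to a known basis of $\Rg$ by a unitriangular transition.

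The known basis is $\{ s_\lambda(x_{\le 0}) \otimes \schub_v : \lambda \in \Par,\ v \text{ suitable}\}$. More concretely, we use the coproduct formula
\[ \bschub_w = \sum_{w \doteq uv} F_u(x_{\le 0}) \, \schub_v(x_{>0}), \]
where the sum runs over length-additive factorizations $w=uv$ with $v$ a permutation of positive integers in $S_\infty$, and $F_u$ is the Stanley symmetric function.

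The first step is to derive this factorization. Take $\schub^{[p,0]} \cdot$ (positive part) from the pipe dream formula of \cite{BJS}. Split each reduced pipe dream of $1^m\times w$ (after shifting) into the rows indexed $\le 0$ and the rows indexed $>0$. The crosses in the rows $>0$ form a reduced pipe dream for a factor $v$. The crosses in the rows $\le 0$ read a reduced word of the complementary factor $u$. As $m\to\infty$, the generating function of the upper rows becomes $F_u$ in $x_{\le 0}$; this is exactly how the Stanley symmetric function arises as a limit. This gives the displayed formula and, in particular, that $\bschub_w\in\Rg$.

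Next, we show spanning and independence. Since $\{s_\lambda\}$ spans $\Lambda^-$ and $\{\schub_v\}$ spans the polynomial part, and since the shift $\gamma$ acts on everything, it suffices to express each $s_\lambda(x_{\le0})\,\gamma^{k}\schub_v$ via $\bschub$'s. Handling the shift uses that $\Rg$ is $\gamma$-stable and that $\bschub$ is $\gamma$-equivariant. We use Grassmannian permutations: for the Grassmannian $w_\lambda$ with descent at $0$, only $v=\mathrm{id}$ contributes, and $F_{w_\lambda}=s_\lambda$. Hence $\bschub_{w_\lambda}=s_\lambda(x_{\le 0})$. Then we filter by length and by $\ell(v)$. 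In the expansion of $\bschub_w$ with $w=uv$, the term with $u=\mathrm{id}$ is $\schub_w$ itself, when $w\in S_{>0}$. More generally, we pair $w$ with its leading term $F_{u_0}\schub_{v_0}$, where $v_0$ is the maximal right factor in $S_{>0}$. Finally, we invert triangularly against the basis $s_\lambda\otimes\schub_v$ after expanding $F_{u}$ into Schur functions; Edelman--Greene gives a unique leading Schur term.

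The main obstacle is this last triangularity. One must choose an order on $S_\Z$ and a bijection $w\leftrightarrow(\lambda,v)$ under which the transition matrix is unitriangular. The natural choice I would try first is $w\mapsto(\lambda(u_0),v_0)$, with $u_0$ the part of $w$ that is a shifted permutation supported at or below the cut. I would then verify the count degree-by-degree using equal graded dimensions. The $\gamma$-shift issue, namely that the variable cut at $0$ versus the support of $w$ is arbitrary, is handled by the $\gamma$-equivariance.
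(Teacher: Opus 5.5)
The paper does not prove this statement. It quotes it from Lam--Lee--Shimozono, so your sketch has to stand on its own, and it has a genuine gap. Write $S_{>0}$ for the permutations generated by $s_1,s_2,\ldots$.

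\textbf{The coproduct formula is applied outside its range.} Your pipe-dream derivation of the formula (cut at $0$, $v\in S_{>0}$, upper factor $F_u(x_{\le 0})$) is correct for $w\in S_\infty$, and more generally for $w$ with no descents at negative positions. Your triangularity step, however, applies it to arbitrary $w\in S_\Z$, including $w$ supported at or below the cut, and there it is false. For $w=s_{-1}$ the definition gives $\schub^{[p,q]}_{s_{-1}}=\gamma^{p-1}\schub_{s_{-p}}=x_p+\cdots+x_{-1}$, so $\bschub_{s_{-1}}=\sum_{i\le -1}x_i$. Your formula instead returns $F_{s_{-1}}(x_{\le 0})=\sum_{i\le 0}x_i$.

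The cause is that the rows indexed $\le 0$ still carry the flag condition: a cross in row $i$ contributes a letter $\ge i$. So the upper generating function is a flagged object, and it is not $F_u(x_{\le 0})$ once $u$ has a negative descent. In fact $\bschub_{s_{-1}}$ is not symmetric in $x_{-1},x_0$. Hence it does not lie in $R_0=\Lambda(x_{\le 0})\otimes\Q[x_1,x_2,\ldots]$, which is the span of the family $s_\lambda(x_{\le 0})\schub_v$, $v\in S_{>0}$, that you triangulate against.

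In Lam--Lee--Shimozono the right factor $v$ ranges over $S_{\neq 0}$, the subgroup generated by all $s_i$ with $i\neq 0$. For such $v$, $\schub_v$ also involves $x_0,x_{-1},\ldots$; for example, the factorization $s_{-1}=\mathrm{id}\cdot s_{-1}$ contributes $-x_0$. Appealing to $\gamma$-equivariance does not close this. It moves the cut to a point depending on $w$, so you are no longer triangulating against a single basis, and your sketch leaves that unresolved.

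\textbf{The indexing and counting also fail.} The map $w\mapsto(\lambda(u_0),v_0)$ is not injective: $s_{-1}$ and $s_{-2}$ both go to $((1),\mathrm{id})$. Its target $\Par\times S_{>0}$ indexes a basis only of the proper subring $R_0$. ``Equal graded dimensions'' is unavailable, since every positive-degree piece of $\Rg$ is infinite-dimensional.

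\textbf{The missing idea} is the parabolic factorization $w=uv$ with $v\in S_{\neq 0}$ and $u$ having no descents except possibly at $0$. This gives a bijection $S_\Z\cong\Par\times S_{\neq 0}$ under which $F_u=s_\lambda$. Every other length-additive factorization $w=u'v'$ with $v'\in S_{\neq 0}$ has $v'$ a proper right factor of $v$, so $\ell(v')<\ell(v)$. Thus $\bschub_w=s_\lambda\schub_v+{}$(terms $F_{u'}\schub_{v'}$ with $\ell(v')<\ell(v)$). This is unitriangular in $\ell(v)$ against the basis $\{s_\lambda\schub_v:\lambda\in\Par,\ v\in S_{\neq 0}\}$ of $\Rg$, and no Edelman--Greene leading-term argument is needed.

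Alternatively, your shifting idea can be made rigorous. Your formula holds for the $w$ with no negative descents, which under the same factorization correspond to $\Par\times S_{>0}$. So the $\bschub_w$ for these $w$ form a basis of $R_0$. Applying $\gamma^{-k}$ gives nested bases of the nested subrings $\gamma^{-k}R_0$, whose union is $\Rg$ and whose index sets exhaust $S_\Z$.
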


Thus, the back stable setting retains the structure of ordinary
Schubert theory. By passing to the back stable limit, we can study the eventual
behavior of these coefficients in a setting where stabilization is built into the
algebraic structure.

\section{Back Stable SEMs and CHMs}

\subsection{Constructing Back Stable SEMs and CHMs}

To see how an SEM lends itself to back stabilization, it will be useful to look at a single elementary symmetric polynomial first. Consider
\[ \sem_d^k = \sem_d(x_1, x_2, \ldots, x_k). \]
Back stabilization should introduce infinitely many symmetric variables extending to the left. More precisely, we should obtain the back stable analogue by letting the left endpoint tend to $-\infty$.
\[ \lim_{p \to -\infty} \sem_d(x_p, x_{p+1}, \ldots, x_k) = \sem_d(\ldots, x_{k-1}, x_k). \]
This is well defined in the back stable ring; it is the usual elementary symmetric function in $x_k,x_{k-1}, \ldots$. Moreover, there is no reason for $k$ to be positive. Thus, for any $k \in \Z$ we may regard $\sem_d(\ldots, x_{k-1},x_k)$ as the back stable analogue of $\sem_d^k$. Back stable SEMs, in turn, will be finite products of $\sem_d(\ldots, x_{k-1},x_k)$ whose right endpoints $k$ are pairwise distinct. Let $\CC$ denote the monoid of finitely supported tuples of nonnegative integers indexed by $\Z$, with componentwise addition. Then back stable SEMs may be indexed by elements of $\CC$.

For $\alpha\in\CC$, we write
\[ |\alpha|=\sum_{i\in\Z}\alpha_i, \qquad
   \gamma(\alpha)_i=\alpha_{i-1}, \]
and let $\parz(\alpha)$ be the partition obtained by sorting the nonzero entries of $\alpha$ in weakly decreasing order.

\begin{definition}
    For any $\alpha \in \CC$,
    \[ \bsem{\alpha} = \prod_{i \in \Z} \sem_{\alpha_i}(\ldots,x_{i-1},x_i) \]
    is the \emph{back stable SEM} with index $\alpha$.
\end{definition}

Note that this definition immediately implies $\bsem{\alpha} \in \Rg$, as it is a finite product of elements in $\Rg$. The same reasoning holds for CHMs.

\begin{definition}
    For any $\alpha \in \CC$,
    \[ \bchm{\alpha} = \prod_{i \in \Z} \chm_{\alpha_i}(\ldots, x_{i-1}, x_i) \]
    is the \emph{back stable CHM} indexed by $\alpha$.
\end{definition}

Likewise, $\bchm{\alpha} \in \Rg$ as it is the finite product of elements in $\Rg$. This allows us to describe the families
\[ \bE = \left \{ \bsem{\alpha} : \alpha \in \CC \right \}, \qquad \bH = \left \{ \bchm{\alpha} : \alpha \in \CC \right \}. \]
In particular, the basis theorems of ordinary SEMs and CHMs translate into $\Rg$.

\subsection{Back Stable SEM and CHM Basis Theorems}

To prove the analogous basis theorem, we use truncation at a sufficiently negative index for linear independence and the Fomin--Gelfand--Postnikov straightening argument for spanning.

\begin{theorem}
    The set $\bE$ forms a linear $\Q$-basis of $\Rg$.
\end{theorem}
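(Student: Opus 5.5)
We prove spanning and linear independence separately. Throughout, write $E_d^{(k)}=\sem_d(\ldots,x_{k-1},x_k)$, so that $\bsem{\alpha}=\prod_i E_{\alpha_i}^{(k)}\big|_{k=i}$.

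\textbf{Spanning.} The ring $\Rg=\Lambda^-\otimes\Q[x_i\mid i\in\Z]$ is generated as an algebra by the $E_d^{(0)}$ for $d\ge 1$, which generate $\Lambda^-$, together with the variables $x_i$ for $i\in\Z$. For every $k$ we have
\[ E_d^{(k)}=E_d^{(k-1)}+x_kE_{d-1}^{(k-1)}, \]
and hence $x_k=E_1^{(k)}-E_1^{(k-1)}$. So every element of $\Rg$ is a finite linear combination of finite products of the $E_d^{(k)}$, where repeated right endpoints $k$ are allowed.

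It therefore suffices to show that any product $E_a^{(k)}E_b^{(k)}$ with the same right endpoint lies in the span of $\bE$. We use the Fomin--Gelfand--Postnikov straightening relation, which is the generating-function identity
\[ \Big(\sum_a E_a^{(k)}t^a\Big)\Big(\sum_b E_b^{(k-1)}(-t)^b\Big)=1+x_kt, \]
read in the appropriate form. It lets us rewrite $E_a^{(k)}E_b^{(k)}$ as a combination of terms $E_{a'}^{(k)}E_{b'}^{(k-1)}$ times lower-order corrections. Concretely, substitute $E_b^{(k)}=E_b^{(k-1)}+x_kE_{b-1}^{(k-1)}$ and $x_k=E_1^{(k)}-E_1^{(k-1)}$, and iterate.

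Termination is the delicate point. In the finite setting, FGP terminate because $\sem^k_d=0$ for $d>k$. Here that vanishing is gone, so we need a different argument. We order a product by the multiset of its right endpoints and push repeated endpoints strictly leftward, keeping the total degree fixed. Since the endpoints are unbounded below, we instead induct on the sum over endpoints $k$ of (multiplicity at $k$ minus one). We also observe that each straightening step moves one factor one step to the left without creating a new collision at the right. I expect this termination argument to be the main obstacle.

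A cleaner alternative avoids the explicit straightening. Truncate at a very negative index $p$, i.e. work in $\Q[x_p,\ldots,x_q]$ with the variables left of $p$ set to zero. There the finite FGP theorem, shifted by $\gamma$, applies. Then take limits as $p\to-\infty$, checking that the coefficients stabilize degree by degree in each bounded range of variables.

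\textbf{Linear independence.} Suppose $\sum_\alpha c_\alpha\bsem{\alpha}=0$ is a finite relation. Choose $p$ far below the supports of all the $\alpha$ with $c_\alpha\ne 0$, with $p\le\min\supp\alpha-|\alpha|$ for each such $\alpha$. Apply the truncation homomorphism $\Rg\to\Q[x_p,\ldots,x_q]$ that sets $x_i=0$ for $i<p$. Each $E_d^{(k)}$ then maps to $\sem_d(x_p,\ldots,x_k)$, and $d\le k-p+1$ holds by the choice of $p$.

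After shifting by $\gamma^{1-p}$, the image of $\bsem{\alpha}$ is precisely a finite SEM $\sem_{\vec a}$ with $a_j=\alpha_{j+p-1}$. Distinct $\alpha$ give distinct $\vec a$, and each satisfies the condition $a_j\le j$. Linear independence of finite SEMs (Fomin--Gelfand--Postnikov, Theorem 3.3) then forces all $c_\alpha=0$.
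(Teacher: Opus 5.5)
Your linear independence argument is correct and is the same as the paper's: truncate at $p\ll0$, shift, and invoke the finite SEM basis. The spanning argument has a genuine gap, and the underlying problem is the direction of straightening. A procedure that only moves colliding factors leftward keeps every endpoint $\le k$, and that cannot work. Since $\sem_1(\ldots,x_1)=\sem_1(\ldots,x_0)+x_1$ and $\sem_2(\ldots,x_1)=\sem_2(\ldots,x_0)+x_1\sem_1(\ldots,x_0)$, one has
\[ \sem_1(\ldots,x_0)^2=\sem_1(\ldots,x_0)\,\sem_1(\ldots,x_1)+\sem_2(\ldots,x_0)-\sem_2(\ldots,x_1). \]
By the independence you proved, this is the only expansion, so endpoint $1$ is forced. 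You can also see this directly by truncating at $p=-1$: the only degree-two back stable SEMs with all endpoints $\le 0$ that survive become $x_{-1}x_0$ and $x_{-1}(x_{-1}+x_0)$, and these cannot produce $(x_{-1}+x_0)^2$.

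Your concrete recipe also makes no progress. For $b=1$, substituting $E_1^{(k)}=E_1^{(k-1)}+x_k$ and then $x_k=E_1^{(k)}-E_1^{(k-1)}$ returns $E_a^{(k)}E_1^{(k)}$ unchanged. The generating-function identity you quote is false as written, since $\prod_{i\le k}(1+x_it)\prod_{i\le k-1}(1-x_it)\neq 1+x_kt$; the second factor must be built from $\chm_b(\ldots,x_{k-1})$. Finally, showing that each colliding pair lies in the span of $\bE$ is not enough by itself, because that span is not yet known to be closed under multiplication. You need a rule rewriting a pair as a combination of specific two-factor products, together with a global termination argument. The paper uses the Fomin--Gelfand--Postnikov identity, which rewrites $\sem_r(\ldots,x_k)\sem_s(\ldots,x_k)$ as a combination of products with endpoints $k$ and $k+1$. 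This pushes collisions to the right, never creates an endpoint below $b$, and terminates because the total degree is fixed.

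Your ``cleaner alternative'' does not close the gap either. Consistency across truncations is automatic: setting $x_{p-1}=0$ sends each SEM in $x_{p-1},x_p,\ldots$ either to an SEM in $x_p,\ldots$ with the same index, or to $0$. So the coefficient of a fixed $\alpha$ is independent of $p$ once $\alpha$ is admissible. What you actually need is that the supports of the truncated expansions stay inside one fixed finite subset of $\CC$ as $p\to-\infty$. Otherwise the limit is an infinite sum and not an element of the span of $\bE$. Nothing in your sketch gives this bound. It is exactly what the rightward straightening supplies: all endpoints are $\ge b$, so any $p\le b-d+1$ already sees the whole expansion.
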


\begin{proof}
    For spanning, define $p\in\Z$ by
    \[ \rho_p : \Rg \longrightarrow \Q[x_p,x_{p+1},\ldots] \]
    being the truncation which sets $x_i=0$ for all $i<p$. If $p$ lies below the support of $\alpha$, then
    \[ \rho_p(\bsem{\alpha})
    =\prod_{i\in\Z}\sem_{\alpha_i}(x_p,x_{p+1},\ldots,x_i). \]
    For sufficiently small $p$, this is an admissible ordinary SEM.

    We begin with linear independence. Suppose
    \[ \sum_{\alpha\in A}c_\alpha \cdot\bsem{\alpha}=0 \]
    for some finite $A\subseteq\CC$. Choose $p\ll0$ such that every $\rho_p(\bsem{\alpha})$, $\alpha\in A$, is admissible and these truncations are all distinct. Applying $\rho_p$ gives a relation among distinct ordinary SEMs. Since the ordinary SEMs form a basis, $c_\alpha=0$ for every $\alpha\in A$.

    For spanning, we use the Fomin--Gelfand--Postnikov straightening argument \cite[3.3]{FGP}. Let $f \in \Rg$ be homogeneous of degree $d$. Since $f$ is symmetric sufficiently far to the left and only uses variables bounded above, choose $b,q\in\Z$ such that
    \[ f\in\Lambda(\ldots,x_{b-1},x_b)\otimes
    \Q[x_{b+1},\ldots,x_q]. \]
    The symmetric-function factor is generated by the $\sem_r(\ldots,x_b)$, while, for $i>b$,
    \[ x_i=\sem_1(\ldots,x_i)-\sem_1(\ldots,x_{i-1}). \]
    Thus, $f$ is a finite linear combination of products of the generators $\sem_r(\ldots,x_k)$ with $k\geq b$. Such a product need not be standard because an endpoint may be repeated. The straightening identity from Fomin--Gelfand--Postnikov \cite[3.3]{FGP}, shifted to integer endpoints, is
    \begin{align*}
    \sem_r(\ldots,x_k)\sem_s(\ldots,x_k)
    ={}&\sem_r(\ldots,x_{k+1})\sem_s(\ldots,x_k)\\
      &+\sum_{a\geq1}\sem_{r-a}(\ldots,x_{k+1})
                    \sem_{s+a}(\ldots,x_k)\\
      &-\sum_{a\geq1}\sem_{r-a}(\ldots,x_k)
                    \sem_{s+a}(\ldots,x_{k+1}),
    \end{align*}
    where terms with a negative degree vanish. Replacing a pair with the smallest repeated endpoint by the right-hand side is exactly the Fomin--Gelfand--Postnikov straightening algorithm. It does not introduce an endpoint below $b$, and their termination argument applies unchanged because the total degree is fixed. The result is a finite linear combination of back stable SEMs.
    Applying the same argument to every homogeneous component shows that $\bE$ spans $\Rg$. Therefore, $\bE$ forms a $\Q$-basis of $\Rg$.
\end{proof}

The proof gives a finite procedure for computing the expansion. If $f$ is homogeneous of degree $d$ and
\[ f\in\Lambda(\ldots,x_b)\otimes\Q[x_{b+1},\ldots,x_q], \]
choose any $p\leq b-d+1$, expand $\rho_p(f)$ in the ordinary SEM basis, and retain the same coefficients while restoring the original integer endpoints. The resulting expression is the back stable SEM expansion of $f$. In particular, the back stable coefficients of $\bschub_w$ may be obtained from a single sufficiently long finite truncation.

\begin{definition}
    For all $w \in S_\Z$, define the back stable SEM coefficients by
    \[ \bschub_w = \sum_{\alpha \in \CC} \overleftarrow{\kappa_{\alpha}^{w}} \cdot \bsem{\alpha}. \]
\end{definition}

The grading gives an immediate restriction on these coefficients.

\begin{corollary}\label{cor:homogeneous-support}
    If $\overleftarrow{\kappa^w_\alpha}\neq 0$, then $|\alpha|=\ell(w)$.
\end{corollary}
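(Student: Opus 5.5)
The plan is to compare homogeneous components. First I would record that $\Rg$ is graded by total degree, with each $x_i$ of degree $1$. The ring $\Lambda^-$ carries its usual grading, $\Q[x_i\mid i\in\Z]$ carries the polynomial grading, and $\Rg$ gets the tensor grading. Two basic facts then need checking.

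\begin{itemize}
\item $\bschub_w$ is homogeneous of degree $\ell(w)$. Each $\schub^{[p,q]}_w$ is a shift of an ordinary Schubert polynomial, hence homogeneous of degree $\ell(\gamma^{1-p}(w))=\ell(w)$. This holds either by the pipe dream formula, since every reduced pipe dream has exactly $\ell(w)$ crosses, or because $\partial_i$ lowers degree by one. Shifting by $\gamma$ preserves degree and length. A coefficientwise limit of homogeneous degree-$\ell(w)$ elements is again homogeneous of degree $\ell(w)$, since every monomial coefficient of degree $\neq\ell(w)$ is identically zero along the sequence.
\item Each $\bsem{\alpha}$ is homogeneous of degree $|\alpha|$. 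Each factor $\sem_{\alpha_i}(\ldots,x_{i-1},x_i)$ is homogeneous of degree $\alpha_i$, and a finite product of homogeneous elements is homogeneous of the summed degree.
\end{itemize}

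Now take the expansion $\bschub_w=\sum_\alpha \overleftarrow{\kappa^w_\alpha}\bsem{\alpha}$, which is a finite sum because $\bE$ is a basis by the preceding theorem. Project onto the degree-$d$ homogeneous component for any $d\neq \ell(w)$. The left side projects to $0$. The right side projects to $\sum_{|\alpha|=d}\overleftarrow{\kappa^w_\alpha}\bsem{\alpha}$, because each basis element lies entirely in a single degree.

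Linear independence of $\bE$ then forces $\overleftarrow{\kappa^w_\alpha}=0$ whenever $|\alpha|=d\neq\ell(w)$. Equivalently, the degree-$\ell(w)$ part of the expansion already equals $\bschub_w$, and uniqueness of basis coefficients rules out every other term.

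The only step requiring care is the first: that the coefficientwise limit preserves homogeneity and that "degree" is well defined in $\Rg=\Lambda^-\otimes\Q[x_i]$. I would handle this by noting that coefficientwise convergence is defined monomial by monomial in infinitely many variables. Each monomial has a definite degree, so no degree-$d$ monomial with $d\neq \ell(w)$ ever acquires a nonzero coefficient in the limit. Everything else is immediate.
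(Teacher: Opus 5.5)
Your proposal is correct and follows the paper's argument: $\bschub_w$ is homogeneous of degree $\ell(w)$, each $\bsem{\alpha}$ is homogeneous of degree $|\alpha|$, and $\bE$ is a homogeneous basis, so only indices with $|\alpha|=\ell(w)$ can appear. Your check that coefficientwise limits preserve homogeneity simply spells out a step the paper asserts without comment.
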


\begin{proof}
    The back stable Schubert polynomial $\bschub_w$ is homogeneous of degree $\ell(w)$, while $\bsem{\alpha}$ is homogeneous of degree $|\alpha|$. Since the back stable SEMs form a homogeneous basis, only indices of degree $\ell(w)$ can occur in the expansion of $\bschub_w$.
\end{proof}

The corresponding CHM basis theorem will follow in Section 4 from Dynkin reversal.

\subsection{Stabilization Theorem}

The shift equivariance of the back stable SEM coefficients allows us to recover the full ordinary SEM expansions from the back stable expansion. Let
\[ \pi_+:\Rg\longrightarrow\Q[x_1,x_2,\ldots] \]
be the specialization which applies the augmentation $\Lambda^-\to\Q$, sets $x_i=0$ in the polynomial factor for $i\leq0$, and fixes $x_i$ for $i>0$. If $u\in S_\infty$, then $\pi_+(\bschub_u)=\schub_u$. Under the same specialization, a back stable SEM $\bsem{\beta}$ becomes the ordinary SEM $\sem_\beta$ when $\beta$ is admissible, and vanishes otherwise.

\begin{definition}
    For $\alpha \in \CC$, we define its \emph{admissibility threshold} to be
    \[d(\alpha)=\max_{\alpha_i \neq 0} \{\alpha_i-i\},\]
    where $d(0)=0$. Then $\gamma^m(\alpha)$ is an admissible ordinary SEM index if and only if $m \geq d(\alpha)$.
\end{definition}

The following theorem formalizes this and allows us to recover finite SEM expansions from the back stable expansion.

\begin{theorem}[Stabilization]\label{thm:stabilization}
    Let $w \in S_\Z$, and let $m \ge 0$ obey $\gamma^m(w) \in S_\infty$.
    Then
    \[\schub_{\gamma^m(w)}=\sum_{\substack{\alpha \in \CC\\ d(\alpha)\leq m}}\overleftarrow{\kappa^w_\alpha} \cdot \sem_{\gamma^m(\alpha)}.\]
    Equivalently, for every $\alpha \in \CC$ satisfying $d(\alpha)\leq m$,
    \[\kappa^{\gamma^m(w)}_{\gamma^m(\alpha)} = \overleftarrow{\kappa^w_\alpha}.\]
\end{theorem}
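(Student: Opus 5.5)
The plan is to apply the specialization $\pi_+$ to the back stable SEM expansion of the shifted permutation and then read off coefficients using the ordinary SEM basis theorem.

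First I will establish shift equivariance. The shift $\gamma$ acts on $\Rg$ as a ring automorphism with $\gamma(x_i)=x_{i+1}$. It maps $\sem_d(\ldots,x_{i-1},x_i)$ to $\sem_d(\ldots,x_i,x_{i+1})$, so $\gamma(\bsem{\alpha})=\bsem{\gamma(\alpha)}$. It also satisfies $\gamma(\bschub_w)=\bschub_{\gamma(w)}$. I expect this last identity to follow directly from the definition as a limit: we have $\gamma\,\schub^{[p,q]}_w=\schub^{[p+1,q+1]}_{\gamma(w)}$, and passing to the coefficientwise limit (which $\gamma$ commutes with) gives the claim. Applying $\gamma^m$ to $\bschub_w=\sum_\alpha \overleftarrow{\kappa^w_\alpha}\bsem{\alpha}$ then gives
\[ \bschub_{\gamma^m(w)}=\sum_{\alpha}\overleftarrow{\kappa^w_\alpha}\,\bsem{\gamma^m(\alpha)}. \]
The sum is finite, and by linear independence of $\bE$ this is the back stable SEM expansion of $\bschub_{\gamma^m(w)}$.

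Next I apply $\pi_+$. Since $u=\gamma^m(w)\in S_\infty$, the stated fact $\pi_+(\bschub_u)=\schub_u$ gives the left side. On the right, each term becomes $\sem_{\gamma^m(\alpha)}$ when $\gamma^m(\alpha)$ is admissible and vanishes otherwise. By the definition of the admissibility threshold, $\gamma^m(\alpha)$ is admissible exactly when $d(\alpha)\le m$. This yields the displayed formula.

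For the second formulation, note that $\alpha\mapsto\gamma^m(\alpha)$ is injective, so the surviving SEMs $\sem_{\gamma^m(\alpha)}$ are pairwise distinct ordinary SEMs. The basis theorem of Fomin--Gelfand--Postnikov then lets us compare coefficients: $\kappa^{\gamma^m(w)}_{\gamma^m(\alpha)}=\overleftarrow{\kappa^w_\alpha}$ for every $\alpha$ with $d(\alpha)\le m$. The same comparison shows that $\kappa^{\gamma^m(w)}_\beta=0$ for any admissible $\beta$ that is not of the form $\gamma^m(\alpha)$.

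The only step needing care is the vanishing claim for $\pi_+$ on non-admissible indices, which is stated but worth checking. If $\alpha_i\ne0$ for some $i\le0$, the factor $\sem_{\alpha_i}(\ldots,x_i)$ involves only variables $x_j$ with $j\le 0$. Under $\pi_+$, the $\Lambda^-$ augmentation together with $x_j\mapsto0$ kills this factor, since it has positive degree. If instead $i>0$, the factor becomes $\sem_{\alpha_i}(x_1,\ldots,x_i)$, which vanishes precisely when $\alpha_i>i$. So a nonvanishing image requires $\alpha_i\le i$ at every $i$ in the support, i.e.\ admissibility, and in that case the image is $\sem_\alpha$. The main thing to check here is that $\pi_+$ is a well-defined ring homomorphism on $\Rg$ compatible with these factorizations. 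Writing $\sem_d(\ldots,x_i)=\sum_{k}\sem_{d-k}(\ldots,x_0)\,\sem_k(x_1,\ldots,x_i)$ for $i>0$ makes this transparent.
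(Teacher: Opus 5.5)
Your proposal is correct and follows the paper's proof: apply $\gamma^m$ to the back stable SEM expansion of $\bschub_w$, specialize with $\pi_+$ (which kills exactly the terms $\bsem{\gamma^m(\alpha)}$ with $d(\alpha)>m$), and compare coefficients using the Fomin--Gelfand--Postnikov basis theorem. Your checks of shift equivariance via $\gamma\,\schub^{[p,q]}_w=\schub^{[p+1,q+1]}_{\gamma(w)}$ and of the vanishing under $\pi_+$ fill in details the paper asserts without proof; your aside about admissible $\beta$ not of the form $\gamma^m(\alpha)$ is vacuous but harmless, since $\gamma^m$ is a bijection on $\CC$.
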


\begin{proof}
    The shift satisfies
    \[ \gamma^m(\bschub_w)=\bschub_{\gamma^m(w)}, \qquad
       \gamma^m(\bsem{\alpha})=\bsem{\gamma^m(\alpha)}. \]
    Applying $\gamma^m$ to the back stable SEM expansion of $\bschub_w$ therefore gives
    \[ \overleftarrow{\schub}_{\gamma^m(w)}=\sum_{\alpha\in\CC}\overleftarrow{\kappa^w_\alpha} \cdot \overleftarrow{\sem}_{\gamma^m(\alpha)}.\]
    Now specialize $x_i=0$ for all $i\leq 0$. Since $\gamma^m(w)\in S_\infty$, the left-hand side becomes $\schub_{\gamma^m(w)}$. On the right,
    $\overleftarrow{\sem}_{\gamma^m(\alpha)}$ becomes
    $\sem_{\gamma^m(\alpha)}$ if $d(\alpha)\leq m$, and vanishes otherwise. Thus,
    \[\schub_{\gamma^m(w)}=\sum_{\substack{\alpha\in\CC \\ d(\alpha)\leq m}}\overleftarrow{\kappa^w_\alpha} \cdot \sem_{\gamma^m(\alpha)}.\]
    Since the SEMs form a basis, we have
    \[\kappa^{\gamma^m(w)}_{\gamma^m(\alpha)}=\overleftarrow{\kappa^w_\alpha}\]
    whenever $d(\alpha)\leq m$.
\end{proof}

\begin{corollary}
    For every $w\in S_\Z$, the shifted SEM expansion of $\schub_{\gamma^m(w)}$ has constant support and coefficients for all sufficiently large $m$.
\end{corollary}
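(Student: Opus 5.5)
The corollary should follow directly from the Stabilization Theorem~\ref{thm:stabilization} and the finiteness of the back stable expansion. Since $\bE$ is a basis of $\Rg$, the expansion
\[ \bschub_w=\sum_{\alpha\in\CC}\overleftarrow{\kappa^w_\alpha}\cdot\bsem{\alpha} \]
is a finite sum. Write $A=\{\alpha\in\CC:\overleftarrow{\kappa^w_\alpha}\neq0\}$ for its support, a finite set. Set
\[ M=\max\Bigl(m_0,\ \max_{\alpha\in A} d(\alpha)\Bigr), \]
where $m_0$ is the least nonnegative integer with $\gamma^{m_0}(w)\in S_\infty$. Such an $m_0$ exists because $w$ has finite support, and then $\gamma^m(w)\in S_\infty$ for every $m\ge m_0$.

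For every $m\ge M$, Theorem~\ref{thm:stabilization} gives
\[ \schub_{\gamma^m(w)}=\sum_{\alpha\in A}\overleftarrow{\kappa^w_\alpha}\cdot\sem_{\gamma^m(\alpha)}. \]
Here the restriction $d(\alpha)\le m$ is automatic on $A$, and indices outside $A$ contribute zero. The indices $\gamma^m(\alpha)$ are admissible by the definition of $d(\alpha)$, and they are pairwise distinct because $\gamma^m$ is injective on $\CC$. So this is precisely the SEM expansion of $\schub_{\gamma^m(w)}$, by uniqueness of expansion in the SEM basis.

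Undoing the shift, the shifted support $\gamma^{-m}(\operatorname{supp})$ equals $A$ for every $m\ge M$. The coefficient attached to each $\alpha\in A$ is $\overleftarrow{\kappa^w_\alpha}$, which does not depend on $m$. This is exactly the claim of constant support and coefficients.

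There is no real obstacle here. The only point needing care is to state what ``shifted'' means: we compare the coefficient of $\sem_{\gamma^m(\alpha)}$ across different $m$. We also need to check that no new admissible indices appear for large $m$. That is handled by uniqueness: every SEM index occurring in $\schub_{\gamma^m(w)}$ must be of the form $\gamma^m(\alpha)$ with $\alpha\in A$.
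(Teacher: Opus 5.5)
Your proposal is correct and follows the paper's own argument: finite support of the back stable expansion, a threshold exceeding both the shift needed for $\gamma^m(w)\in S_\infty$ and every $d(\alpha)$ on the support, then Theorem~\ref{thm:stabilization}. Your remarks on the injectivity of $\gamma^m$ and on uniqueness of the SEM expansion make explicit what the paper leaves implicit.
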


\begin{proof}
    Only finitely many coefficients $\overleftarrow{\kappa^w_\alpha}$ are nonzero. Choose $m_0$ so that $\gamma^m(w)\in S_\infty$ and $m\ge d(\alpha)$ for every such $\alpha$ whenever $m\ge m_0$. Theorem~\ref{thm:stabilization} then gives
    \[ \schub_{\gamma^m(w)}=
       \sum_{\alpha\in\supp_{\bE}(\bschub_w)}
       \overleftarrow{\kappa^w_\alpha}\,\sem_{\gamma^m(\alpha)} \]
    for every $m\ge m_0$.
\end{proof}

Combined with the finite procedure above, the threshold in this corollary works. Compute the finite support $A$ from one sufficiently long truncation and take $m_0$ large enough so that $\gamma^{m_0}(w)\in S_\infty$ and
\[ m_0\geq\max_{\alpha\in A}d(\alpha). \]

The back stable expansion of $\bschub_w$ encodes all information about the ordinary SEM expansions of $\schub_{1^t \times w}$. When $w\in S_\infty$, this includes the expansion of $\schub_w$ itself: it suffices to retain the coefficients $\overleftarrow{\kappa^w_\alpha}$ with $d(\alpha)\le 0$. This is a principal advantage of passing into the back stable setting.

\section{Dynkin Reversal}

\subsection{The Reversing Map}

We now turn our attention to the symmetry between back stable SEMs and CHMs. SEMs and CHMs in the finite setting behave in seemingly unrelated ways. The symmetry arises from Dynkin reversal in the back stable setting. Consider the classical $\Q$-algebra involution $\omega : \Lambda \to \Lambda$ defined by $\omega(e_r) = h_r$ for any $r \ge 1$. Following Lam--Lee--Shimozono, we extend this involution to a $\Q$-algebra involution $\widetilde{\omega}: \Rg \to \Rg$.

\begin{definition}
    $\widetilde{\omega}: \Rg \to \Rg$ obeys
    \[ \widetilde{\omega}(e_r)=h_r, \qquad \widetilde{\omega}(x_i) = -x_{1-i}. \]
\end{definition}

There is a corresponding involution of $S_\Z$ given on simple reflections.

\begin{definition}
    For any $w = s_{a_1} s_{a_2} \cdots s_{a_{\ell(w)}}  \in S_\Z$, let $\widehat{w} = s_{-a_1} s_{-a_2} \cdots s_{-a_{\ell(w)}}$ be its \emph{Dynkin reversal}. Note that this definition is independent of the choice of reduced word and gives an involution of $S_\Z$. In other words,
    \[ w(i)=1-\widehat w(1-i). \]
    Moreover, for any $w \in S_n$ we define the reverse complement $w^{\mathrm{rc}}$ by
    \[ w^{\mathrm{rc}} = \gamma^n(\widehat{w}). \]
    We choose this convention because $w \in S_n$ implies $w^{\mathrm{rc}} \in S_n$; thus, reverse complement induces an involution on $S_n$ for every $n \ge 1$.

    For $\alpha \in \CC$, we let $\widehat{\alpha}_i = \alpha_{-i}$ for all $i$. This will be the analogous reversal map on $\CC$.
\end{definition}

Lam--Lee--Shimozono prove that these two involutions on $S_\Z$ and $\Rg$ are compatible with back stable Schubert polynomials.

\begin{theorem}[Lam--Lee--Shimozono \cite{LLS}]
    For all $w \in S_\Z$, one has $\widetilde{\omega} \left ( \bschub_w \right ) = \bschub_{\widehat{w}}$.
\end{theorem}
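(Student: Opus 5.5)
The plan is to use the divided difference characterization of back stable Schubert polynomials and show that $\widetilde{\omega}$ intertwines the relevant operators. Lam--Lee--Shimozono characterize $\{\bschub_w\}$ as the unique family in $\Rg$ with three properties:
\begin{itemize}
\item each $\bschub_w$ is homogeneous of degree $\ell(w)$;
\item $\partial_i\bschub_w=\bschub_{ws_i}$ if $\ell(ws_i)<\ell(w)$, and $\partial_i\bschub_w=0$ otherwise, for every $i\in\Z$;
\item $\eta_0(\bschub_w)=\delta_{w,\mathrm{id}}$.
\end{itemize}
Here $\eta_0:\Rg\to\Q$ kills all positive degree symmetric functions and all $x_i$. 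The divided differences extend to $\Rg$ because $s_i$ acts on $\Rg$: write $\Rg=\Lambda\otimes\Q[x_i\mid i\in\Z]$ with $\Lambda$ the symmetric functions in all variables, on which every $s_i$ acts trivially. So it suffices to check that $\{\widetilde{\omega}(\bschub_{\widehat{w}})\}_{w}$ satisfies the same three properties as $\{\bschub_w\}_{w}$; the statement then follows by replacing $w$ with $\widehat w$.

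The first step is to make $\widetilde{\omega}$ precise on this presentation of $\Rg$. On the $\Lambda$ factor it acts as $\omega$, and on the polynomial factor by $x_i\mapsto -x_{1-i}$. I would check consistency with the $\Lambda^-$ description: for instance, $e_r(\ldots,x_{-1},x_0)$ should be sent to the appropriate combination of $h_r$ and the $x_j$ with $j\geq 1$, using $\omega$ together with the sign $x\mapsto -x$. Next I would verify the intertwining relation $\widetilde{\omega}\circ s_i=s_{-i}\circ\widetilde{\omega}$. On the polynomial generators this holds because $s_i$ swaps $x_i,x_{i+1}$, and these are sent to $-x_{1-i},-x_{-i}$, which $s_{-i}$ swaps. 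On $\Lambda$ both sides reduce to $\omega$, since $s_i$ and $s_{-i}$ act trivially there. Combining this with
\[ \widetilde{\omega}(x_i-x_{i+1})=x_{-i}-x_{1-i} \]
gives
\[ \widetilde{\omega}\circ\partial_i=\partial_{-i}\circ\widetilde{\omega}. \]

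The second step transports each property through $\widetilde{\omega}$. Homogeneity is preserved because $\omega$ preserves degree and $\ell(\widehat{w})=\ell(w)$. From the definition of Dynkin reversal, $\widehat{ws_i}=\widehat{w}s_{-i}$, so descents of $w$ at $i$ correspond exactly to descents of $\widehat w$ at $-i$. Using the intertwining relation with $-i$ in place of $i$,
\[ \partial_i\widetilde{\omega}(\bschub_{\widehat w})=\widetilde{\omega}(\partial_{-i}\bschub_{\widehat w}), \]
and the right-hand side equals $\widetilde{\omega}(\bschub_{\widehat{ws_i}})$ or $0$ according as $w$ has a descent at $i$ or not. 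Finally, $\eta_0\circ\widetilde{\omega}=\eta_0$, because $\omega$ preserves the augmentation ideal of $\Lambda$ and $x_i\mapsto -x_{1-i}$ preserves the ideal generated by the variables. Hence $\eta_0(\widetilde{\omega}(\bschub_{\widehat w}))=\delta_{w,\mathrm{id}}$.

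The main obstacle is the uniqueness input, which I would re-derive if it is not available as stated. Suppose two families satisfy all three properties. Induct on $\ell(w)$. The difference of the two degree-$\ell(w)$ candidates is killed by every $\partial_i$, so it lies in $\Lambda$, the elements of $\Rg$ invariant under all $s_i$. This is where the subtlety lies: the kernel of all $\partial_i$ is not just $\Q$, so homogeneity and the divided difference relations alone do not determine $\bschub_w$. The normalization must be strong enough to pin down the symmetric part. I would try first to use the LLS coproduct statement $\bschub_w=\sum_{uv=w} F_u\otimes\schub_v$ with $\ell(w)=\ell(u)+\ell(v)$, which fixes the symmetric component via Stanley symmetric functions. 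If that route stalls, the fallback is to compute the $\Lambda$-coefficient of both sides directly and use $\omega(F_u)=F_{\widehat{u}^{-1}}$-type identities to close the argument.
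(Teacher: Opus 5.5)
Your strategy (prove $\widetilde{\omega}\circ\partial_i=\partial_{-i}\circ\widetilde{\omega}$, then invoke the divided-difference characterization) is the natural one. The paper itself simply cites Lam--Lee--Shimozono for this statement. Your checks on the $x_j$, on degrees, on $\widehat{ws_i}=\widehat{w}s_{-i}$, on the denominators, and on $\eta_0$ are fine.

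The execution, however, rests on a wrong model of $\Rg$, and this breaks both main steps. Symmetric functions in all of $x_\Z$ are not elements of $\Rg=\Lambda^-\otimes\Q[x_i\mid i\in\Z]$. Every element of $\Rg$ involves only finitely many variables of positive index, which, for example, $\sum_{i\in\Z}x_i$ does not. The symmetric factor is $\Lambda^-=\Lambda(\ldots,x_{-1},x_0)$, and $s_0$ does \emph{not} fix it. So ``on $\Lambda$ both sides reduce to $\omega$'' fails precisely at $i=0$, which is the one case where the exact form of $\widetilde{\omega}$ matters.

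To see that your check cannot be right, note that it would go through verbatim for the map $e_r\mapsto h_r$, $x_i\mapsto +x_{1-i}$. Yet that map sends the $s_0$-invariant element $\sem_1(\ldots,x_{-1})=\sem_1(\ldots,x_0)-x_0$ to $\chm_1(\ldots,x_0)-x_1$, which is not $s_0$-invariant.

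The repair is as follows. For each $i$, $\Rg$ is generated by the $x_j$ together with the $\sem_r(\ldots,x_{i-1})$, and $s_i$ fixes the latter. The generating-function computation in Section~4 uses only the defining values of $\widetilde{\omega}$, so it is not circular, and it gives $\widetilde{\omega}(\sem_r(\ldots,x_{i-1}))=\chm_r(\ldots,x_{1-i})$, which $s_{-i}$ fixes. Hence $\widetilde{\omega}\circ s_i=s_{-i}\circ\widetilde{\omega}$ for all $i$.

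The uniqueness ``obstacle'' is an artifact of the same mistake. In your model the joint kernel of the $\partial_i$ contains all of $\Lambda$. Then homogeneity, the divided-difference relations and $\eta_0$ would not pin down $\bschub_w$, the characterization you quote would be false there, and the argument as written does not close.

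In the actual ring, $\bigcap_i\ker\partial_i=\Rg^{S_\Z}=\Q$. Suppose $f\in\Lambda(\ldots,x_b)\otimes\Q[x_{b+1},\ldots,x_q]$ is fixed by $s_q$. Since $f$ does not involve $x_{q+1}$, it cannot involve $x_q$. Descending through $s_{q-1},\ldots,s_{b+1}$ puts $f$ in $\Lambda(\ldots,x_b)$. Then $s_b$-invariance removes $x_b$, and by symmetry it removes every $x_i$ with $i\le b$.

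So your induction on $\ell(w)$ closes immediately with the constant-term normalization, and no coproduct or Stanley-function input is needed. The identity proposed in your fallback is also off: in fact $\omega(F_u)=F_{\widehat{u}}=F_{u^{-1}}$, so $F_{\widehat{u}^{-1}}=F_u$.
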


In the back stable SEMs, the map behaves particularly nicely.

\begin{theorem}
    For all $\alpha \in \CC$, 
    \[ \widetilde{\omega} \left ( \bsem{\alpha} \right ) = \bchm{\widehat{\alpha}}, \qquad \widetilde{\omega} \left ( \bchm{\alpha} \right ) = \bsem{\widehat{\alpha}}. \]
\end{theorem}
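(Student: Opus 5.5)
The plan is to reduce everything to a single generating-function identity for one factor, and then use that $\widetilde{\omega}$ is a $\Q$-algebra involution. Since $\bsem{\alpha}$ is a finite product of the factors $\sem_{\alpha_i}(\ldots,x_{i-1},x_i)$, and $\widetilde{\omega}$ is multiplicative, the first identity follows once we show, for every $k\in\Z$ and $d\ge 0$,
\[ \widetilde{\omega}\bigl(\sem_d(\ldots,x_{k-1},x_k)\bigr)=\chm_d(\ldots,x_{-k-1},x_{-k}). \]
Granting this, the factor sitting at endpoint $i$ with degree $\alpha_i$ is sent to the complete homogeneous factor at endpoint $-i$ with the same degree. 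Hence $\widetilde{\omega}(\bsem{\alpha})=\prod_i \chm_{\alpha_i}(\ldots,x_{-i})=\bchm{\widehat{\alpha}}$. The second identity then comes for free: apply $\widetilde{\omega}$ to the first identity with $\widehat{\alpha}$ in place of $\alpha$, and use $\widetilde{\omega}^2=\mathrm{id}$ and $\widehat{\widehat{\alpha}}=\alpha$.

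To prove the single-factor identity, I would write $\sem_d(\ldots,x_k)$ explicitly as an element of $\Rg=\Lambda^-\otimes\Q[x_i\mid i\in\Z]$, using the generating series
\[ E_k(t)=\sum_{d\ge0}\sem_d(\ldots,x_k)\,t^d, \qquad H_k(t)=\sum_{d\ge0}\chm_d(\ldots,x_k)\,t^d. \]
The expression in $\Rg$ depends on the sign of $k$, so there are two cases.

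If $k\ge 0$, then
\[ E_k(t)=E_0(t)\prod_{i=1}^{k}(1+x_it). \]
If $k<0$, then
\[ E_k(t)=E_0(t)\prod_{i=k+1}^{0}(1+x_it)^{-1}. \]
The inverse is a power series whose coefficients are the polynomials $\chm_j(-x_{k+1},\ldots,-x_0)$. In either case the coefficient of $t^d$ is a finite sum of products of an element of $\Lambda^-$ and a polynomial, which is how $\sem_d(\ldots,x_k)$ is understood in $\Rg$.

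Next apply $\widetilde{\omega}$ coefficientwise in $t$. It sends $E_0(t)$ to $H_0(t)=\prod_{i\le0}(1-x_it)^{-1}$, and it sends each factor $1+x_it$ to $1-x_{1-i}t$.

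For $k\ge0$ this gives
\[ H_0(t)\prod_{i=1}^{k}(1-x_{1-i}t)=H_0(t)\prod_{j=1-k}^{0}(1-x_jt)=H_{-k}(t), \]
since multiplying by these factors removes the variables $x_{1-k},\ldots,x_0$ from $H_0$.

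For $k<0$ it gives
\[ H_0(t)\prod_{i=k+1}^{0}(1-x_{1-i}t)^{-1}=H_0(t)\prod_{j=1}^{-k}(1-x_jt)^{-1}=H_{-k}(t), \]
which adjoins the variables $x_1,\ldots,x_{-k}$. Comparing coefficients of $t^d$ in both cases gives the claimed identity.

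The main obstacle is the bookkeeping in the case $k<0$. There $\sem_d(\ldots,x_k)$ lives in $\Rg$ only through the inverse factor, so one must check that applying $\widetilde{\omega}$ to a power series in $t$ is legitimate. It is, because $\widetilde{\omega}$ is a ring map applied degree by degree, each coefficient of $t^d$ is a finite sum, and ring maps respect inverses of series with constant term $1$. One must also confirm the index shift $i\mapsto 1-i$ lands exactly on the endpoint $-k$ rather than $1-k$; the telescoping products above are where I would check this carefully.
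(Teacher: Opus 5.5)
Your proposal is correct and follows essentially the same route as the paper: reduce to a single factor by multiplicativity, compare the generating series $E_k(t)$ and $H_{-k}(t)$ using $\widetilde{\omega}(E_0(t))=H_0(t)$ and $\widetilde{\omega}(1+x_it)=1-x_{1-i}t$, then obtain the CHM identity from $\widetilde{\omega}^2=\mathrm{id}$ and $\widehat{\widehat{\alpha}}=\alpha$. The only difference is presentation: the paper states the single-factor step as a first-order recursion propagated from $k=0$, whereas you unroll it into explicit telescoping products for $k\ge 0$ and $k<0$, which makes explicit the invertibility of $1-x_{1-k}t$ that the paper uses implicitly for negative $k$.
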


\begin{proof}
    The key idea is to understand how $\widetilde{\omega}$ acts on a single back stable elementary symmetric function. We compare the generating functions for elementary and complete homogeneous symmetric functions and show that they satisfy the exact same recurrence after applying $\widetilde{\omega}$. The result then follows by applying this factorwise and reindexing.
    We begin by proving the first equality. Since $\widetilde{\omega}$ is an algebra automorphism, we can distribute
    \[ \widetilde{\omega} \left ( \bsem{\alpha} \right ) = \prod_{i \in \Z} \widetilde{\omega} \left ( \sem_{\alpha_i}(\ldots, x_{i-1},x_i) \right ).\]
    Now, we extend $\widetilde{\omega}$ to $\Rg[[u]]$ coefficientwise and define
    \[ \EE_k (u) = \sum_{m \ge 0} \sem_m(\ldots, x_{k-1},x_k) \cdot u^m, \quad \HH_k(u) = \sum_{m \ge 0} \chm_m(\ldots, x_{k-1},x_k) \cdot u^m. \]
    The one-variable recurrences for elementary and complete homogeneous polynomials give, in $\Rg[[u]]$,
    \[ \EE_k(u) = (1 + x_k u) \cdot \EE_{k-1}(u), \qquad
       \HH_k(u) = (1 - x_k u)^{-1} \cdot \HH_{k-1}(u). \]
    We apply $\widetilde{\omega}$ to the first identity. $\widetilde{\omega}(x_k) = -x_{1-k}$ yields
    \[ \widetilde{\omega} \left ( \EE_k(u) \right ) = (1 - x_{1-k} u) \cdot \widetilde{\omega} \left ( \EE_{k-1}(u) \right ), \]
    and we replace $k$ with $1-k$ in the second identity and get
    \[ \HH_{-k}(u) = (1 - x_{1-k} u) \cdot \HH_{-(k-1)}(u). \]
    Thus the sequences $\left ( \widetilde{\omega}(\EE_k(u)) \right )_{k \in \Z}$ and
    $\left ( \HH_{-k}(u) \right )_{k \in \Z}$ satisfy the same first-order recursion, so agreement at one $k$ propagates to every $k \in \Z$. Thus, it suffices to consider $k = 0$. Here, every coefficient lies in $\Lambda$, where
    $\widetilde{\omega}$ restricts to the classical involution $\sem_m \mapsto \chm_m$, so
    $\widetilde{\omega}(\EE_0(u)) = \HH_0(u)$. Hence $\widetilde{\omega}(\EE_k(u)) = \HH_{-k}(u)$ for all $k \in \Z$,
    and comparing coefficients of $u^m$ gives
    \[ \widetilde{\omega} \left ( \sem_m(\ldots, x_{k-1}, x_k) \right ) = \chm_m(\ldots, x_{-k-1}, x_{-k}). \]
    Thus, returning to the product, we can reindex $j = -i$ to get
    $\widehat{\alpha}_j = \alpha_{-j}$,
    \[ \widetilde{\omega} \left ( \bsem{\alpha} \right )
       = \prod_{i \in \Z} \chm_{\alpha_i}(\ldots, x_{-i-1}, x_{-i})
       = \prod_{j \in \Z} \chm_{\widehat{\alpha}_j}(\ldots, x_{j-1}, x_j)
       = \bchm{\widehat{\alpha}}. \]
    The asserted CHM identity follows by applying $\widetilde{\omega}$ again, since both $\widetilde{\omega}$ and $\alpha\mapsto\widehat\alpha$ are involutions.
\end{proof}

\begin{corollary}
    The set $\bH$ forms a $\Q$-basis of $\Rg$.
\end{corollary}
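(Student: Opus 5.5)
The plan is to transport the basis theorem for $\bE$ across the involution $\widetilde{\omega}$. By the preceding theorem,
\[ \widetilde{\omega}\bigl(\bsem{\alpha}\bigr)=\bchm{\widehat{\alpha}} \qquad \text{for every } \alpha\in\CC. \]
Hence $\widetilde{\omega}(\bE)=\{\bchm{\widehat\alpha}:\alpha\in\CC\}$. Since $\alpha\mapsto\widehat\alpha$ is a bijection of $\CC$ (it is an involution, and it preserves finite support and nonnegativity), this set is exactly $\bH$. Moreover, the map $\alpha\mapsto\bchm{\widehat\alpha}$ is injective whenever $\alpha\mapsto\bsem{\alpha}$ is, so there is no collapsing of indices.

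Next, I would note that $\widetilde{\omega}$ is a $\Q$-linear bijection $\Rg\to\Rg$. It is defined as a $\Q$-algebra involution, so $\widetilde{\omega}^2=\mathrm{id}$. If one wants to double-check that this involution property is well defined on generators, the computation in the preceding proof gives it directly. Applying $\widetilde{\omega}$ twice to $\sem_m(\ldots,x_k)$ yields $\sem_m(\ldots,x_k)$, and $x_i\mapsto -x_{1-i}\mapsto x_i$. These elements generate $\Rg$, by the generation argument used in the basis theorem for $\bE$.

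Finally, a linear automorphism carries a basis to a basis. Since $\bE$ is a $\Q$-basis of $\Rg$ by the basis theorem for $\bE$, its image $\widetilde{\omega}(\bE)=\bH$ is a $\Q$-basis of $\Rg$. Concretely, spanning follows because any $f\in\Rg$ satisfies $f=\widetilde{\omega}(\widetilde{\omega}f)$, and $\widetilde{\omega}f$ is a finite combination of elements of $\bE$. Independence follows by applying $\widetilde{\omega}$ to a putative relation among elements of $\bH$.

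The only potential obstacle is confirming that $\widetilde{\omega}$ is genuinely an automorphism of all of $\Rg$, rather than just a map defined on generators. I would take this from the Lam--Lee--Shimozono construction, as the paper does.
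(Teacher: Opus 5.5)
Your proposal is correct and is the paper's argument: $\widetilde{\omega}$ is a $\Q$-linear automorphism of $\Rg$ sending $\bsem{\alpha}\mapsto\bchm{\widehat{\alpha}}$, and since $\alpha\mapsto\widehat{\alpha}$ is a bijection of $\CC$, it carries the basis $\bE$ bijectively onto $\bH$. One small remark on your optional check that $\widetilde{\omega}^2=\mathrm{id}$: it is cleanest on the defining generators $e_r$ of $\Lambda^-$ and the $x_i$, where it is immediate from $\omega^2=\mathrm{id}$ on $\Lambda$; doing it on the $\sem_m(\ldots,x_k)$ via the preceding proof needs $\widetilde{\omega}$ of a back stable CHM, which the paper obtains only from the involution property itself (rerunning the generating-function recurrence for $\HH_k(u)$ would also avoid this).
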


\begin{proof}
    The automorphism $\widetilde{\omega}$ sends the basis $\bE$ bijectively onto $\bH$.
\end{proof}

\subsection{SEM and CHM Duality}

The map $\widetilde{\omega}$ acts nicely on back stable SEMs, CHMs, and Schubert polynomials. Using this structure, we can prove the following theorem:

\begin{theorem}
    For any $w \in S_\Z$ and $\alpha \in \CC$, the coefficient of $\bsem{\alpha}$ in $\bschub_w$ is equal to the coefficient of $\bchm{\widehat{\alpha}}$ in $\bschub_{\widehat{w}}$. Equivalently,
    \[ \bschub_{\widehat{w}} = \sum_{\alpha \in \CC} \overleftarrow{\kappa^w_\alpha} \cdot \bchm{\widehat{\alpha}}. \]
\end{theorem}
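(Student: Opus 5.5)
We apply the involution $\widetilde{\omega}$ to the back stable SEM expansion of $\bschub_w$ and read off coefficients. All the needed ingredients are already established: the Lam--Lee--Shimozono compatibility $\widetilde{\omega}(\bschub_w)=\bschub_{\widehat{w}}$, the theorem $\widetilde{\omega}(\bsem{\alpha})=\bchm{\widehat{\alpha}}$, and the fact that $\bH$ is a $\Q$-basis of $\Rg$.

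First, start from the defining expansion
\[ \bschub_w=\sum_{\alpha\in\CC}\overleftarrow{\kappa^w_\alpha}\cdot\bsem{\alpha}. \]
This sum is finite: only finitely many coefficients are nonzero, since $\bE$ is a basis and every element of $\Rg$ is a finite linear combination of basis elements. Because $\widetilde{\omega}$ is $\Q$-linear, we may apply it termwise. The left side becomes $\bschub_{\widehat{w}}$ by the Lam--Lee--Shimozono theorem. Each $\bsem{\alpha}$ becomes $\bchm{\widehat{\alpha}}$ by the preceding theorem. This gives
\[ \bschub_{\widehat{w}}=\sum_{\alpha\in\CC}\overleftarrow{\kappa^w_\alpha}\cdot\bchm{\widehat{\alpha}}, \]
which is the second form of the statement.

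To deduce the coefficient statement, note that $\alpha\mapsto\widehat{\alpha}$ is a bijection (indeed an involution) of $\CC$. Hence the elements $\bchm{\widehat{\alpha}}$ appearing above are pairwise distinct elements of the basis $\bH$. By uniqueness of expansions in the basis $\bH$ (the corollary following the reversal theorem), the coefficient of $\bchm{\widehat{\alpha}}$ in $\bschub_{\widehat{w}}$ is exactly $\overleftarrow{\kappa^w_\alpha}$.

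There is no serious obstacle here. The only point requiring care is justifying that $\widetilde{\omega}$ commutes with the sum, and this holds because the sum is finite and $\widetilde{\omega}$ is linear. One should also remember that injectivity of $\alpha\mapsto\widehat{\alpha}$ is what prevents distinct terms from merging, so that the coefficients can be read off directly.
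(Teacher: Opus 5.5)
Your proof is correct and is essentially the paper's argument: apply $\widetilde{\omega}$ termwise to the finite expansion, then use $\widetilde{\omega}(\bschub_w)=\bschub_{\widehat{w}}$ and $\widetilde{\omega}(\bsem{\alpha})=\bchm{\widehat{\alpha}}$. Your extra step reads off the coefficients using uniqueness of expansion in the basis $\bH$ and injectivity of $\alpha\mapsto\widehat{\alpha}$, which only makes explicit what the paper leaves implicit for the coefficient formulation.
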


\begin{proof}
    Apply $\widetilde{\omega}$ to the back stable SEM expansion:
    \begin{align*}
        \bschub_{\widehat{w}} &= \widetilde{\omega} \left ( \bschub_{w} \right ) \\
        &= \sum_{\alpha \in \CC} \overleftarrow{\kappa^w_\alpha} \cdot \widetilde{\omega}\left ( \bsem{\alpha} \right ) \\
        &= \sum_{\alpha \in \CC} \overleftarrow{\kappa^w_\alpha} \cdot \bchm{\widehat{\alpha}}.
    \end{align*}
\end{proof}

\begin{corollary}
    \[ \bschub_w = \sum_{\alpha \in \CC} \overleftarrow{\kappa^w_\alpha} \cdot \bsem{\alpha} = \sum_{\alpha \in \CC} \overleftarrow{\kappa^{\widehat w}_{\widehat \alpha}} \cdot \bchm{\alpha}. \]
\end{corollary}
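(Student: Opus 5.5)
The plan is to apply the duality theorem just proved with $\widehat{w}$ in place of $w$. Since $w\mapsto\widehat{w}$ is an involution of $S_\Z$, we have $\widehat{\widehat{w}}=w$. The theorem then gives
\[ \bschub_{w}=\bschub_{\widehat{\widehat w}}=\sum_{\beta\in\CC}\overleftarrow{\kappa^{\widehat w}_\beta}\cdot\bchm{\widehat{\beta}}. \]
Next I reindex by $\alpha=\widehat{\beta}$. The map $\beta\mapsto\widehat\beta$, $\widehat\beta_i=\beta_{-i}$, is an involution of $\CC$: it preserves finite support and nonnegativity. So as $\beta$ runs over $\CC$, so does $\alpha$, and $\beta=\widehat{\alpha}$. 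This yields
\[ \bschub_w=\sum_{\alpha\in\CC}\overleftarrow{\kappa^{\widehat w}_{\widehat\alpha}}\cdot\bchm{\alpha}. \]
The first equality in the corollary is just the definition of the back stable SEM coefficients.

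The only point needing care is that the sums make sense. Only finitely many $\overleftarrow{\kappa^{\widehat w}_\beta}$ are nonzero, since the expansion of $\bschub_{\widehat w}$ in the basis $\bE$ is a finite linear combination. The reindexing is therefore a bijection on a finite support, and no convergence issues arise.

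Alternatively, and equivalently, one may apply $\widetilde{\omega}$ directly to the expansion of $\bschub_{\widehat w}$ in the basis $\bE$. Here one uses the Lam--Lee--Shimozono theorem $\widetilde\omega(\bschub_{\widehat w})=\bschub_{w}$ together with $\widetilde\omega(\bsem{\beta})=\bchm{\widehat\beta}$.

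There is no genuine obstacle. The step most worth stating explicitly is the involutivity of both Dynkin reversal on $S_\Z$ and $\alpha\mapsto\widehat\alpha$ on $\CC$, which the paper has already recorded. If desired, one can also note that the resulting coefficients are uniquely determined, because $\bH$ is a basis of $\Rg$ by the preceding corollary.
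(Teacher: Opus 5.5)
Your proposal is correct and is essentially the paper's argument: the corollary follows from the duality theorem by substituting $\widehat{w}$ for $w$, using $\widehat{\widehat{w}}=w$, and reindexing through the involution $\alpha\mapsto\widehat{\alpha}$ on $\CC$. Your alternative of applying $\widetilde{\omega}$ to the expansion of $\bschub_{\widehat{w}}$ is just the proof of that theorem repeated, and the finite-support remark is a harmless extra.
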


If $\alpha, \beta_1, \beta_2 \in \CC$ obey $\alpha = \beta_1+\beta_2$ and $w, u_1, u_2 \in S_\Z$ obey $w = u_1u_2$, then

\begin{itemize}
    \item $\widehat{\alpha} = \widehat{\beta_1} + \widehat{\beta_2}$ and $\widehat{w} = \widehat{u_1} \widehat{u_2}$;
    \item $|\alpha| = |\widehat{\alpha}|$ and $\ell(w) = \ell(\widehat{w})$;
    \item If $w$ is $u$-avoiding for $u \in S_\infty$, then $\widehat{w}$ is $u^{\mathrm{rc}}$-avoiding.
\end{itemize}

The implications here are significant. Since the reversing map respects the relevant operations on $\Rg$, $S_\Z$, and $\CC$, any result regarding back stable SEM expansions expressed in this language has a corresponding dual result for back stable CHMs. Moving forward, we will primarily emphasize results concerning back stable SEMs, dualizing them to CHMs when the resulting statement is worth mentioning.

\section{Orbit Sums}

\subsection{Stanley Symmetric Functions}

\begin{definition}
    For $w \in S_\infty$, define the \emph{Stanley symmetric function} of $w$ to be
    \[ F_w = \lim_{m \to \infty} \schub_{1^m \times w} \in \Lambda. \]
    The limit is taken coefficientwise. We extend this definition to $S_\Z$ by requiring $F_{\gamma(w)} =F_w$ for all $w \in S_\Z$. This is well defined because for every $w \in S_\Z$, there exists $k$ such that $\gamma^k(w) \in S_\infty$, and thus $F_w = F_{\gamma^k(w)}$.
\end{definition}

\begin{definition}
    The group $S_\Z$ naturally acts on $\CC$ by permuting indices. For $\alpha\in\CC$, let $\Orb(\alpha)$ denote its orbit. Each orbit is associated with a unique partition, obtained by sorting the nonzero entries of $\alpha$ in weakly decreasing order. This gives the natural identification
    \[ \CC/S_\Z \cong \Par. \]
    Accordingly, for $\lambda \in \Par$, we abuse notation by writing $\Orb(\lambda)$ for the corresponding set of indices in $\CC$.
\end{definition}

The Stanley symmetric functions are naturally compatible with back stable SEMs. Indeed, the SEM coefficients of $\schub_{1^m \times w}$ after shifting stabilize to the back stable coefficients $\overleftarrow{\kappa^w_\alpha}$. Consequently, for $w \in S_\infty$
\begin{align*}
    F_w &= \lim_{m \to \infty} \schub_{1^m \times w} \\
    &= \sum_{\alpha \in \CC} \overleftarrow{\kappa_\alpha^w} \cdot \lim_{m \to \infty} \sem_{\gamma^m(\alpha)}.
\end{align*} 
\begin{proposition}
    For any $\alpha \in \CC$,
    \[ \lim_{m \to \infty} \sem_{\gamma^m(\alpha)} = e_{\parz(\alpha)} \in \Lambda, \]
    where $e_\lambda$ denotes the elementary symmetric function associated with $\lambda \in \Par$. In particular, the limit $\lim_{m \to \infty} \sem_{\gamma^m(\alpha)}$ depends only on $\Orb(\alpha)$.
\end{proposition}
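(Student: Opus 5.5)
The plan is to compute the limit one factor at a time. By definition,
\[ \sem_{\gamma^m(\alpha)} = \prod_{i\in\Z} \sem_{\alpha_i}(x_1,\ldots,x_{i+m}), \]
where only the finitely many $i$ in the support of $\alpha$ contribute nontrivial factors. For $m \ge d(\alpha)$ every such factor is admissible.

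For a fixed $i$ with $\alpha_i\neq 0$, we have $i+m\to\infty$ as $m \to \infty$. Hence $\sem_{\alpha_i}(x_1,\ldots,x_{i+m})$ converges coefficientwise to the elementary symmetric function $e_{\alpha_i}(x_1,x_2,\ldots)$. Concretely, the coefficient of any fixed monomial $x^\beta$ is $1$ if $\beta$ is a $0/1$ vector of size $\alpha_i$ whose support lies in $[1,i+m]$, and $0$ otherwise. Once $i+m$ exceeds the largest index in $\supp\beta$, this value is constant in $m$.

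Next I will show that the coefficientwise limit of a finite product is the product of the limits. This is the only point needing care. Fix a monomial $x^\beta$ with $\beta$ supported in $[1,N]$. Take $m$ large enough that $i+m\ge N$ for every $i$ in the support of $\alpha$. Then the coefficient of $x^\beta$ in the product depends only on the truncations of each factor to the variables $x_1,\ldots,x_N$. This holds because every factor is a polynomial with nonnegative-degree terms, and a monomial involving only $x_1,\ldots,x_N$ arises solely from factor monomials in those variables. For such $m$, each truncation equals $e_{\alpha_i}(x_1,\ldots,x_N)$, independent of $m$. Therefore the coefficient of $x^\beta$ in $\sem_{\gamma^m(\alpha)}$ stabilizes to its coefficient in $\prod_i e_{\alpha_i}(x_1,\ldots,x_N)$, which is its coefficient in $\prod_i e_{\alpha_i}$.

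Finally, since $\Lambda$ is commutative and $e_0=1$, we have $\prod_{i} e_{\alpha_i} = e_{\parz(\alpha)}$, since $\parz(\alpha)$ is just the multiset of nonzero entries sorted. The value $\parz(\alpha)$ is constant on $\Orb(\alpha)$, because permuting indices does not change the multiset of entries. This gives the orbit-invariance claim.

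I expect no real obstacle. The only subtle step is justifying the exchange of the limit with the product, and the truncation-to-$x_1,\ldots,x_N$ argument above settles it, since all series involved have polynomial truncations.
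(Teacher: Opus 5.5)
Your proposal is correct and follows the same route as the paper: write $\sem_{\gamma^m(\alpha)}$ as the finite product $\prod_i \sem_{\alpha_i}(x_1,\ldots,x_{i+m})$, pass the limit to each factor to get $e_{\alpha_i}$, and reassemble these as $e_{\parz(\alpha)}$. Your truncation to $x_1,\ldots,x_N$ makes explicit why the coefficientwise limit may be interchanged with the finite product, a step the paper carries out without comment.
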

\begin{proof}
    We may compute
    \begin{align*}
    \lim_{m \to \infty} \sem_{\gamma^m(\alpha)} &= \prod_{i \in \Z} \lim_{m \to \infty} \sem_{\alpha_i}(x_1, \ldots, x_{i+m}) \\
    &= \prod_{i \in \Z} e_{\alpha_i}(x_1,x_2, \ldots ) \\
    &= e_{\parz(\alpha)}.
    \end{align*}
\end{proof}
The CHM analogue follows similarly.
\[ \lim_{m \to \infty} \chm_{\gamma^m(\alpha)} = h_{\parz(\alpha)} \in \Lambda, \]
where $h_{\parz(\alpha)}$ is the complete homogeneous symmetric function associated with $\parz(\alpha) \in \Par$. Now that the limit of $\sem_{\gamma^m(\alpha)}$ is well understood, we can finish our computation of $F_w$.
\begin{align*}
    F_w &= \sum_{\alpha \in \CC} \overleftarrow{\kappa^w_\alpha} \cdot \lim_{m \to \infty} \sem_{\gamma^m(\alpha)} \\
    &= \sum_{\alpha \in \CC}  \overleftarrow{\kappa^w_\alpha} \cdot e_{\parz(\alpha)} \\
    &= \sum_{\lambda \in \Par} \left ( \sum_{\alpha \in \Orb(\lambda)} \overleftarrow{\kappa^w_\alpha} \right ) \cdot e_\lambda.
\end{align*}
This is a unique expansion, as $\{ e_\lambda : \lambda \in \Par \}$ forms a basis. The same reasoning applies to back stable CHMs and gives the following theorem regarding back stable SEM and CHM coefficients.
\begin{theorem}
    For $w \in S_\Z$ and $\lambda \in \Par$,
    \[ \sum_{\alpha \in \Orb(\lambda)} \overleftarrow{\kappa^w_\alpha} = [e_\lambda] F_w, \qquad \sum_{\alpha \in \Orb(\lambda)} \overleftarrow{\kappa^{\widehat{w}}_{\widehat{\alpha}}} = [h_\lambda] F_w. \]
\end{theorem}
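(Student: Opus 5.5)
First I reduce to $w\in S_\infty$. For any $w\in S_\Z$ pick $k$ with $\gamma^k(w)\in S_\infty$. Since $\gamma^k(\bschub_w)=\bschub_{\gamma^k(w)}$ and $\gamma^k(\bsem{\alpha})=\bsem{\gamma^k(\alpha)}$, we get $\overleftarrow{\kappa^{\gamma^k(w)}_{\gamma^k(\alpha)}}=\overleftarrow{\kappa^w_\alpha}$. The shift $\alpha\mapsto\gamma^k(\alpha)$ is a bijection of $\Orb(\lambda)$ onto itself, and $F_{\gamma^k(w)}=F_w$ by definition. So both sides of the first identity are unchanged under $w\mapsto\gamma^k(w)$.

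For $w\in S_\infty$, I make the computation preceding the statement rigorous. By Theorem~\ref{thm:stabilization} and the corollary after it, there is $m_0$ such that for all $m\ge m_0$,
\[ \schub_{1^m\times w}=\sum_{\alpha\in A}\overleftarrow{\kappa^w_\alpha}\,\sem_{\gamma^m(\alpha)}, \]
where $A$ is the finite support. Because the sum is finite, the coefficientwise limit can be taken term by term. The proposition computing $\lim_m\sem_{\gamma^m(\alpha)}=e_{\parz(\alpha)}$ then gives
\[ F_w=\sum_{\alpha}\overleftarrow{\kappa^w_\alpha}e_{\parz(\alpha)}. \]
Grouping terms by orbit and using that $\{e_\lambda\}$ is a basis of $\Lambda$ yields the first identity. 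The one point needing care is the term-by-term limit: for fixed $N$, restrict to $x_1,\dots,x_N$; for $m\ge N$, each $\sem_{\gamma^m(\alpha)}$ restricts exactly to $e_{\parz(\alpha)}(x_1,\dots,x_N)$, since every endpoint $i+m$ is then at least $N$.

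For the second identity I use the duality corollary $\bschub_w=\sum_\alpha\overleftarrow{\kappa^{\widehat w}_{\widehat\alpha}}\bchm{\alpha}$. Running the same argument with CHMs requires a CHM stabilization statement. Applying $\pi_+$ to $\bchm{\beta}$ gives $\chm_{\gamma^m(\beta)}$, with no admissibility condition, except that factors with endpoint $\le 0$ map to $0$ in positive degree. So for $m$ large enough that all of $\gamma^m(\operatorname{supp})$ is positive, the same shift-and-specialize proof gives
\[ \schub_{1^m\times w}=\sum_\alpha \overleftarrow{\kappa^{\widehat w}_{\widehat\alpha}}\,\chm_{\gamma^m(\alpha)}. \]
Taking limits with $\lim_m \chm_{\gamma^m(\alpha)}=h_{\parz(\alpha)}$ gives
\[ F_w=\sum_\lambda\Bigl(\sum_{\alpha\in\Orb(\lambda)}\overleftarrow{\kappa^{\widehat w}_{\widehat\alpha}}\Bigr)h_\lambda, \]
and $\{h_\lambda\}$ is a basis, which proves the second identity.

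An alternative route to the second identity applies the first identity to $\widehat w$ and then $\omega$, using $F_{\widehat w}=\omega F_w$. I would avoid this, since it requires the additional fact that $F_{\widehat w}=\omega(F_w)$. The main obstacle is the CHM stabilization step, specifically checking that specialization kills exactly the right terms. Since $\chm_d$ in zero variables is $0$ for $d>0$, this is clean once $m$ exceeds $-\min\operatorname{supp}$ over the finite support.
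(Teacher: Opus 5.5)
Your proposal is correct and follows essentially the paper's route. You expand $\schub_{1^m\times w}$ via the stabilization theorem, pass to the limit termwise using $\lim_m \sem_{\gamma^m(\alpha)} = e_{\parz(\alpha)}$ (resp.\ $h_{\parz(\alpha)}$), and group by orbits using that $\{e_\lambda\}$ and $\{h_\lambda\}$ are bases. Your explicit reduction from $S_\Z$ to $S_\infty$, your justification of the termwise limit, and your CHM stabilization via $\pi_+$ and the duality corollary make rigorous what the paper compresses into ``the same reasoning applies to back stable CHMs.''
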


This prompts us to regard the back stable SEM coefficients as position-sensitive refinements of the elementary symmetric coefficients of the Stanley symmetric function $F_w$. While $F_w$ only remembers the partition $\lambda$, the back stable SEM basis distinguishes individual compositions $\alpha$ with shape $\lambda$.

\subsection{Total Sum of Back Stable SEM Coefficients}

We can collapse the preceding orbit-sum identity further by summing over all possible orbits. The result is particularly nice and rigid.

\begin{lemma}\label{lem:total-coefficient-sum}
    For every $w\in S_\Z$,
    \[ \sum_{\alpha \in \CC} \overleftarrow{\kappa^w_\alpha} = \begin{cases}
        1 & \text{if $\bschub_w$ is a single back stable SEM;} \\
        0 & \text{otherwise.}
    \end{cases} \]
\end{lemma}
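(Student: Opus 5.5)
We sum the orbit-sum identity of the previous theorem over all $\lambda\in\Par$:
\[ \sum_{\alpha\in\CC}\overleftarrow{\kappa^w_\alpha}=\sum_{\lambda\in\Par}[e_\lambda]F_w=\phi(F_w), \]
where $\phi:\Lambda\to\Q$ is the ring homomorphism with $\phi(e_r)=1$ for all $r\ge1$. The sum is finite because only finitely many $\overleftarrow{\kappa^w_\alpha}$ are nonzero. The problem therefore reduces to evaluating $\phi$ on a Stanley symmetric function.

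\emph{Identifying $\phi$.} On generating functions, $\phi(\sum_r e_r u^r)=(1-u)^{-1}$. Since $E(u)H(-u)=1$, this gives $\phi(H(-u))=1-u$, that is, $\phi(h_1)=1$ and $\phi(h_r)=0$ for $r\ge2$. Hence $\phi=\mathrm{ev}\circ\omega$, where $\mathrm{ev}$ sets $x_1=1$ and all other variables to $0$. Next we need $\omega F_w=F_{\widehat w}$. To get it, let $\eta:\Rg\to\Lambda$ kill the polynomial factor, i.e.\ set every $x_i=0$ there. We use $\eta(\bschub_w)=F_w$, which is standard from Lam--Lee--Shimozono. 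Since $\eta$ commutes with $\widetilde\omega$ (the rule $x_i\mapsto -x_{1-i}$ preserves the ideal generated by the $x_i$), the theorem $\widetilde\omega(\bschub_w)=\bschub_{\widehat w}$ yields $\omega F_w=F_{\widehat w}$. Therefore the total sum equals $F_{\widehat w}(x_1,0,0,\ldots)$. By the reduced-word/compatible-sequence formula for Stanley symmetric functions, this one-variable specialization is $1$ if $\widehat w$ has a reduced word that is strictly monotone in the direction allowed for a single variable, and $0$ otherwise. So the total sum always lies in $\{0,1\}$. Dualizing, it equals $1$ exactly when $w$ has a strictly monotone reduced word in the opposite direction. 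I will fix the direction by checking against $\bschub_{312}=\chm_2(\ldots,x_1)$, which is a CHM and not an SEM; this forces the relevant words for $w$ to be the decreasing ones, e.g.\ $s_2s_1$.

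\emph{Matching the two cases.} If $\bschub_w=\bsem{\alpha}$, the sum is trivially $1$. For the converse, suppose $w=s_{a_1}\cdots s_{a_k}$ with $a_1>\cdots>a_k$. Split the word into maximal runs of consecutive indices $b,b-1,\ldots,c$. Each run is a cycle whose back stable Schubert polynomial is a single factor $\sem_{b-c+1}(\ldots,x_b)$; this can be seen from the pipe dreams or from the single-row limit of $\schub_{1^m\times\cdot}$, as in the example above. Distinct runs act on disjoint, separated intervals, and I expect $\bschub_w$ to be the product of these factors. Their endpoints $b$ are distinct, so the product is a single $\bsem{\alpha}$. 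Consequently, whenever the sum is nonzero, $\bschub_w$ is a single back stable SEM, and the dichotomy follows.

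The main obstacle is this factorization step. Back stable Schubert polynomials need not be multiplicative over separated supports in general, so I would verify it directly. The plan is to compute $\schub_{1^m\times w}$ for such $w$ via divided differences, or to show that its bottom pipe dream is the unique reduced pipe dream up to the moves that generate the elementary symmetric factors. A fallback is to appeal to Woodruff's characterization of permutations with $\schub_w=\sem_{\vec a}$, combined with the Stabilization Theorem.
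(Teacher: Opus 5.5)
Your reduction is correct up to the point where you fix the direction of monotonicity. There the calibration goes backwards, and the converse step fails as written. The permutation $312=s_2s_1$ has the strictly decreasing reduced word $(2,1)$, yet $\bschub_{312}=\chm_2(\ldots,x_1)=\sem_1(\ldots,x_1)\sem_1(\ldots,x_2)-\sem_2(\ldots,x_2)$ has coefficient sum $0$. By contrast, $231=s_1s_2$ has only the increasing word $(1,2)$, and $\bschub_{231}=\sem_2(\ldots,x_1,x_2)$. So a decreasing run $b,b-1,\ldots,c$ gives the cycle $s_bs_{b-1}\cdots s_c$, whose back stable Schubert polynomial is $\chm_{b-c+1}(\ldots,x_c)$, not $\sem_{b-c+1}(\ldots,x_b)$. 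Your own computation of $\bschub_{312}$, and the single-row example you cite, already show this.

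No calibration is actually needed. In the compatible-sequence formula, strictness is forced at the ascents of the word; this is the convention that gives $\schub_{312}=x_1^2$. Hence $F_u(x_1,0,0,\ldots)$ counts the strictly decreasing reduced words of $u$. The map $a\mapsto -a$ turns decreasing words of $\widehat w$ into increasing words of $w$. So the total sum is the number of strictly increasing reduced words of $w$. This is at most one, since such a word lists the support of $w$ in increasing order, and it is exactly the quantity $a_{1^{\ell(w)},w}$ that the paper reaches.

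With the direction corrected, your argument is complete. Split the increasing word into maximal runs $c,c+1,\ldots,b$. Each cycle $s_cs_{c+1}\cdots s_b$ is Grassmannian with descent at $b$ and shape $1^{b-c+1}$, so its back stable Schubert polynomial is $\sem_{b-c+1}(\ldots,x_b)$. The factorization you flagged is the standard block factorization: if $u\in S_k$ and $v$ fixes $[k]$, then $\schub_{uv}=\schub_u\schub_v$. The reason is that in a reduced pipe dream of $uv$ no pipe of $u$ crosses a pipe of $v$. The pipe dream therefore splits into one for $u$ inside the staircase of cells $(i,j)$ with $i+j\le k$, and one for $v$. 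Shifting and passing to the limit shows that back stable Schubert polynomials multiply whenever the supports are separated by an unused generator. That is exactly the situation between distinct runs, and the distinct top endpoints $b$ then give a single $\bsem{\alpha}$.

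The paper evaluates $\chi(F_w)$ (your $\phi$) through the Edelman--Greene Schur expansion and Jacobi--Trudi. It then matches the two cases via $321$/$312$-avoidance and Woodruff's classification. Your corrected route replaces the first half by $\chi=\mathrm{ev}\circ\omega$ together with $\omega F_w=F_{\widehat w}$, which is a nice use of Dynkin reversal. It replaces the second half by an explicit factorization. As a result it avoids the external classification and identifies $\alpha$ explicitly, with $\alpha_b=b-c+1$ at the top of each run.
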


\begin{proof}
    Both the coefficient sum and the condition of being a single back stable SEM are invariant under shifting $w$. We may therefore assume that $w\in S_\infty$. Now
    \[\sum_{\alpha \in \CC} \overleftarrow{\kappa^w_\alpha}=\sum_{\lambda \in \Par}\sum_{\alpha \in \Orb(\lambda)}\overleftarrow{\kappa^w_\alpha}=\sum_{\lambda \in \Par} [e_\lambda] F_w.\]
    Thus, if we let $\chi : \Lambda \to \Z$ be the algebra map satisfying
    \[\chi\left(e_r\right)=1\qquad\text{for }r \ge 1,\]
    then our desired value is simply $\chi(F_w)$. Expand
    \[F_w=\sum_{\lambda\vdash \ell(w)} a_{\lambda,w}s_\lambda.\]
    By the Edelman--Greene rule \cite{EG}, $a_{\lambda,w}$ is the number of row-and-column-strict tableaux of shape $\lambda$ whose row-reading word is a reduced word for $w$. By the second Jacobi--Trudi identity,
    \[\chi(s_\lambda)=\chi\left(\det\left(e_{\lambda'_i+j-i}\right)_{i,j=1}^{\ell(\lambda')}\right).\]
    The map $\chi$ sends each $e_r$ to $1$ if $r\geq 0$, where $e_0=1$, and to $0$ otherwise.
    If $\lambda_1\geq 2$, then the first two rows of the image are all $1$'s,
    and thus the matrix is singular. Hence this case vanishes, and
    $\lambda_1=1$ is the only case we have to consider. This is equivalent to
    $\lambda=1^{\ell(w)}$, for which $\chi(s_\lambda)=1$. Therefore
    \[\chi(F_w)=a_{1^{\ell(w)},w}.\]
    This coefficient is the number of strictly increasing reduced words of $w$.

    \begin{claim}
        The permutation $w$ has a strictly increasing reduced word if and only if it avoids $321$ and $312$. Moreover, such a reduced word is unique.
    \end{claim}

    \begin{proof}
        Induct on $n$ for $w\in S_n$. If $w$ avoids $321$ and $312$, then $n$ must lie in the last or penultimate position. Deleting $n$ preserves avoidance, and by induction the resulting permutation has a unique strictly increasing reduced word; according to the position of $n$, the reduced word for $w$ is either unchanged or obtained by appending $n-1$. Conversely, in any strictly increasing reduced word, the letter $n-1$, if present, must occur last, so removing it reduces to the case of $S_{n-1}$. This gives the avoidance characterization and uniqueness.
    \end{proof}

    We may use this exact pattern avoidance condition in a related claim.

    \begin{claim}
        $\bschub_w$ is a single back-stable SEM if and only if $w$ is $321$ and $312$ avoiding.
    \end{claim}

    \begin{proof}
        Note that $\bschub_w$ being a single back stable SEM is equivalent to $\schub_{1^m \times w}$ being a single SEM for all large $m$, as the limit is coefficientwise. By \cite{Woodruff} this only occurs when $1^m \times w$ is $312$ and $1432$-avoiding. Since $1^m \times w$ has many left adjoined fixed points, $1432$-avoidance is equivalent to $321$-avoidance. This gives $321$ and $312$ as a necessary and sufficient condition for $\bschub_w$ being a single back-stable SEM.
    \end{proof}

    Putting these two claims together, we have the desired value $a_{1^{\ell(w)},w}$ being $1$ if and only if $w$ is $321$ and $312$ avoiding, which is equivalent to $\bschub_w$ being a single SEM. Conversely, if $\bschub_w$ is not a single SEM, then $w$ is not $321$ and $312$ avoiding. This is equivalent to $a_{1^{\ell(w)},w}=0$. 
\end{proof}

Dynkin reversal gives the corresponding statement for complete homogeneous monomials.

\begin{corollary}
    The sum of the coefficients in the back stable CHM expansion of $\bschub_w$ is $1$ if $\bschub_w$ is a single back stable CHM, and is $0$ otherwise.

    For every $w\in S_\Z$,
    \[ \sum_{\alpha \in \CC} \overleftarrow{\kappa_{\alpha}^{\widehat{w}}} = \begin{cases}
        1 & \text{if $\bschub_w$ is a single back stable CHM;} \\
        0 & \text{otherwise.}
    \end{cases} \]
\end{corollary}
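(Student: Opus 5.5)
The plan is to deduce this corollary from Lemma~\ref{lem:total-coefficient-sum} by transporting it through Dynkin reversal. Write the back stable CHM expansion of $\bschub_w$ using the corollary to the duality theorem:
\[ \bschub_w = \sum_{\alpha \in \CC} \overleftarrow{\kappa^{\widehat w}_{\widehat \alpha}} \cdot \bchm{\alpha}. \]
Since $\alpha \mapsto \widehat\alpha$ is a bijection of $\CC$, the sum of the CHM coefficients equals $\sum_{\beta\in\CC}\overleftarrow{\kappa^{\widehat w}_{\beta}}$. This is the displayed left-hand side, so the two formulations in the statement agree. It is also exactly the total SEM coefficient sum for the permutation $\widehat w$.

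Next, apply Lemma~\ref{lem:total-coefficient-sum} to $\widehat w \in S_\Z$. The sum is $1$ if $\bschub_{\widehat w}$ is a single back stable SEM and $0$ otherwise. It remains to check that $\bschub_{\widehat w}$ is a single back stable SEM if and only if $\bschub_w$ is a single back stable CHM.

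For this I use that $\widetilde{\omega}$ is an involutive algebra automorphism. It satisfies $\widetilde{\omega}(\bschub_{\widehat w}) = \bschub_w$ by the Lam--Lee--Shimozono theorem together with $\widehat{\widehat w}=w$, and $\widetilde{\omega}(\bsem{\beta}) = \bchm{\widehat\beta}$. So if $\bschub_{\widehat w} = \bsem{\beta}$, then $\bschub_w = \bchm{\widehat\beta}$. Conversely, if $\bschub_w=\bchm{\alpha}$, then applying $\widetilde{\omega}$ and the second identity $\widetilde{\omega}(\bchm{\alpha})=\bsem{\widehat\alpha}$ gives $\bschub_{\widehat w}=\bsem{\widehat\alpha}$.

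The main (mild) obstacle is interpretive. "Single back stable CHM" must mean coefficient exactly $1$ on one basis element, matching the convention in Lemma~\ref{lem:total-coefficient-sum}. The duality preserves coefficients verbatim, so the correspondence holds with this meaning.

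Optionally, one can translate the condition into pattern avoidance. By the claim in the lemma's proof, $\widehat w$ must avoid $321$ and $312$. The listed compatibility with reverse complement then says $w$ avoids $321^{\mathrm{rc}}=321$ and $312^{\mathrm{rc}}=231$. This gives a concrete characterization, though it is not needed for the statement.
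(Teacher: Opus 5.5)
Your proposal is correct and is essentially the paper's argument. The paper also obtains the corollary by rewriting the CHM expansion through Dynkin duality, applying Lemma~\ref{lem:total-coefficient-sum} to $\widehat w$, and using $\bschub_w=\bchm{\alpha}\Leftrightarrow\bschub_{\widehat w}=\bsem{\widehat\alpha}$ to match the single-CHM and single-SEM conditions; your optional translation into avoidance of $321$ and $231$ is the same remark the paper makes right after the corollary.
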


The proof of Lemma~\ref{lem:total-coefficient-sum} shows that $\bschub_w$ is a single back stable SEM if and only if $w$ avoids $321$ and $312$. By Dynkin duality,
\[ \bschub_w = \bchm{\alpha} \Longleftrightarrow \bschub_{\widehat{w}} = \bsem{\widehat{\alpha}}. \]
Thus, $\bschub_w$ is a single back stable CHM if and only if $w$ avoids $321^{\mathrm{rc}} = 321$ and $312^{\mathrm{rc}} = 231$. This agrees with the ordinary CHM case described by Woodruff \cite[1.3]{Woodruff}.

\section{Shifted Specialization Polynomials}

\subsection{Polynomiality}

We now investigate the stabilization of $\schub_{1^t \times w}$ under principal specialization at $1$. For $t \ge 0$, consider
\[\schub_{1^t \times w}(1,1,\ldots,1),\]
which we abbreviate to $\schub_{1^t \times w}(1)$. This study builds on Macdonald's reduced-word identity and on the work of Fomin and Kirillov \cite{FK}, who considered the corresponding shifted reduced-word polynomial and, for dominant permutations, related it to plane partitions with bounded parts. Principal specializations of Schubert polynomials have also been studied in \cite{StanleyReciprocity,Gao,NadeauTewari,NadeauTewariForest}. Our contribution here is the back stable SEM formulation and a reduced word identity which connects the back stable SEM coefficients to reduced words.

Recall that the pipe dream formula expresses $\schub_u$ as a sum of monomials indexed by reduced pipe dreams. It follows that $\schub_u(1)$ is the number of reduced pipe dreams of $u$. Under the embedding $u\mapsto 1^t\times u$, the pipe dream diagram acquires $t$ additional rows above the original diagram, providing new positions in which crosses may occur. 

By the Macdonald reduced word identity \cite[6.11]{Macdonald}, building on the study of reduced decompositions by Stanley \cite{Stanley},
\[ \schub_{u}(1) = \frac{1}{\ell(u)!}\sum_{a \in R(u)} a_1 a_2 \cdots a_{\ell(u)}. \]
Here $R(u)$ is the set of reduced words of $u$. Using this expression, we can now investigate $\schub_{\gamma^t(w)}$ as a function of $t$. 

\begin{definition}
For any $w \in S_\Z$, define
\[ \T_w(t) = \frac{1}{\ell(w)!}\sum_{a \in R(w)} (t+a_1)(t+a_2) \cdots (t+a_{\ell(w)}) \]
to be the \emph{shifted specialization polynomial} of $w$.
\end{definition}

Note that if $w \in S_\infty$ and $t \ge 0$ then
\[ s_{a_1}s_{a_2} \cdots s_{a_{\ell(w)}} =w \Longleftrightarrow s_{a_1+t}s_{a_2+t} \cdots s_{a_{\ell(w)}+t} = \gamma^t(w). \]
which leads to the alternate formulation
\[ \T_w(t) = \frac{1}{\ell(w)!} \sum_{a \in R(\gamma^t(w))} a_1 a_2 \cdots a_{\ell(w)}. \]
Note that by the Macdonald formula, for $w \in S_\infty$ and $t \ge 0$

\begin{align*}
    \T_w(t) &= \frac{1}{\ell(w)!}\sum_{a \in R(1^t \times w)} a_1a_2 \ldots a_{\ell(w)} \\
    &= \schub_{1^t \times w}(1) \\
    &= |\RP(1^t \times w)|.
\end{align*}

Thus, $\T_w$ counts the number of reduced pipe dreams of $1^t \times w$. This can be viewed as adding $t$ empty rows above, which are now available for crosses to occupy. The polynomial nature of $\T_w$ tells us that adding $t$ upper rows admits polynomially many additional reduced pipe dreams. Similar to back stable SEM coefficients, the $\T_w$'s respect sliding relations.

\begin{fact}
$\T_{\gamma(w)}(t) = \T_w(t+1)$ for any $w \in S_\Z$.
\end{fact}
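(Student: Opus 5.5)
The plan is to work directly from the definition of $\T_w$ as a sum over reduced words. The one ingredient needed is a bijection between the reduced words of $w$ and those of $\gamma(w)$ that raises every letter by one, together with the invariance of length under the shift.

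First I will check that $\gamma$ acts on $S_\Z$ as the group automorphism determined by $\gamma(s_i)=s_{i+1}$. Concretely, $\gamma(w)(i)=w(i-1)+1$, which is conjugation by the translation $i\mapsto i+1$ of $\Z$. Since the simple reflections satisfy the same Coxeter relations after the index shift, $\gamma$ preserves products. Hence for any word $a=(a_1,\ldots,a_\ell)$,
\[ s_{a_1}\cdots s_{a_\ell}=w \iff s_{a_1+1}\cdots s_{a_\ell+1}=\gamma(w). \]
Next I will show that $\ell(\gamma(w))=\ell(w)$. The shift is a bijection on words over $\Z$ and preserves word length, so a minimal-length word for $w$ maps to a minimal-length word for $\gamma(w)$, and conversely via $\gamma^{-1}$. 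Therefore $a\mapsto a+1$, meaning componentwise addition of $1$, is a bijection $R(w)\to R(\gamma(w))$.

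Substituting this bijection into the definition of $\T$ gives
\[ \T_{\gamma(w)}(t)=\frac{1}{\ell(w)!}\sum_{a\in R(w)}\prod_{j=1}^{\ell(w)}\bigl(t+a_j+1\bigr)=\T_w(t+1). \]
This identity holds as polynomials in $t$, so no restriction to $t\ge 0$ or to $w\in S_\infty$ is needed.

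I expect no real obstacle. The only point that needs care is the convention for $\gamma$ on permutations: the paper defines it on generators, so I must confirm that it is a well-defined automorphism of $S_\Z$, which the conjugation description settles. Everything after that is reindexing.
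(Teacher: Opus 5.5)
Your proposal is correct and takes the same route as the paper: adding $1$ to every letter is a bijection $R(w)\to R(\gamma(w))$, and substituting it into the definition of $\T$ gives $\T_{\gamma(w)}(t)=\T_w(t+1)$. You also spell out what the paper leaves implicit, namely that $\gamma$ is conjugation by the translation of $\Z$ and hence a length-preserving automorphism of $S_\Z$.
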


\begin{proof}
    Shifting a reduced word of $w$ increases each of its entries by one. Substituting this bijection into the definition gives the identity.
\end{proof}

Thus, up to this sliding relation, it suffices to consider $\T_w$ for $w \in S_\infty$.

\subsection{Back Stable SEM Formulation}

We can give another formulation of $\T_w$ using the $\overleftarrow{\kappa^w_\alpha}$ coefficients.

\begin{proposition}
    For $\alpha \in \CC$, let
    \[ B_\alpha(t) = \prod_{i \in \Z} \binom{t+i}{\alpha_i}. \]
    This is a polynomial in $t$, and $B_\alpha(t)=\sem_{\gamma^t(\alpha)}(1)$ whenever $t\ge d(\alpha)$. Then for any $w \in S_\Z$, one has
    \[ \T_w(t) = \sum_{\alpha \in \CC} \overleftarrow{\kappa^w_\alpha} \cdot B_\alpha(t). \]
\end{proposition}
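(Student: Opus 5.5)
The plan is to prove the identity for all sufficiently large integers $t$ and then extend it by polynomiality. Both sides are polynomials in $t$, so agreement on infinitely many integers forces equality.

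First, the basic properties of $B_\alpha$. Each factor $\binom{t+i}{\alpha_i}$ is a polynomial in $t$ of degree $\alpha_i$. Only finitely many $\alpha_i$ are nonzero, and $\binom{t+i}{0}=1$, so $B_\alpha(t)$ is a finite product and hence a polynomial. For the specialization, note that $\gamma^t(\alpha)_j=\alpha_{j-t}$. So $\sem_{\gamma^t(\alpha)}=\prod_i \sem_{\alpha_i}(x_1,\ldots,x_{i+t})$, which is admissible exactly when $t\ge d(\alpha)$. In that range every nonzero $\alpha_i$ satisfies $i+t\ge \alpha_i\ge 1$. Specializing each factor at $x=1$ gives $\binom{i+t}{\alpha_i}$, because $\sem_a(1^n)=\binom{n}{a}$. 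Hence $B_\alpha(t)=\sem_{\gamma^t(\alpha)}(1)$.

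Second, the main identity. Fix $w\in S_\Z$ and let $A$ be the finite support of $\overleftarrow{\kappa^w_\cdot}$. Choose $m_0\ge0$ so that $\gamma^{m_0}(w)\in S_\infty$ and $m_0\ge d(\alpha)$ for all $\alpha\in A$. For every integer $t\ge m_0$, Theorem~\ref{thm:stabilization} gives
\[ \schub_{\gamma^t(w)}=\sum_{\alpha\in A}\overleftarrow{\kappa^w_\alpha}\,\sem_{\gamma^t(\alpha)}. \]
Specializing at $x_i=1$ and using the first step turns the right side into $\sum_\alpha \overleftarrow{\kappa^w_\alpha}B_\alpha(t)$.

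For the left side we need $\schub_{\gamma^t(w)}(1)=\T_w(t)$ when $w\in S_\Z$ is arbitrary. Write $u=\gamma^{m_0}(w)\in S_\infty$. Macdonald's identity, in the alternate formulation of $\T$, gives $\schub_{\gamma^{t-m_0}(u)}(1)=\T_u(t-m_0)$ for $t\ge m_0$. The Fact $\T_{\gamma(w)}(t)=\T_w(t+1)$, iterated $m_0$ times, gives $\T_u(t-m_0)=\T_w(t)$. Alternatively, one can argue directly: the reduced words of $\gamma^t(w)$ are those of $w$ shifted by $t$, and this set consists of positive words for large $t$.

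Hence the two polynomials $\T_w(t)$ and $\sum_\alpha\overleftarrow{\kappa^w_\alpha}B_\alpha(t)$ agree at every integer $t\ge m_0$, and therefore they are equal identically. The only delicate point is the bookkeeping that makes all three conditions hold at once: $\gamma^t(w)\in S_\infty$, $t\ge d(\alpha)$ on the support, and Macdonald's identity applying. This bookkeeping is handled by the single threshold $m_0$, and the polynomial identity then removes the threshold.
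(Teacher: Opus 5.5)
Your proof is correct and follows the same route as the paper: apply the Stabilization Theorem for all sufficiently large $t$, specialize at $x_i=1$, and extend the resulting identity to all $t$ because both sides are polynomials in $t$. You also write out two details the paper leaves implicit: the verification that $B_\alpha(t)=\sem_{\gamma^t(\alpha)}(1)$ for $t\ge d(\alpha)$, and the identification $\schub_{\gamma^t(w)}(1)=\T_w(t)$ for arbitrary $w\in S_\Z$ via Macdonald's identity and the shift Fact.
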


\begin{proof}
    Only finitely many coefficients $\overleftarrow{\kappa^w_\alpha}$ are nonzero. Choose $t$ large enough that $\gamma^t(w)\in S_\infty$ and $t\ge d(\alpha)$ for every index in the support of $\bschub_w$. Theorem~\ref{thm:stabilization} gives
    \[ \schub_{\gamma^t(w)} = \sum_{\alpha \in \CC} \overleftarrow{\kappa^w_\alpha} \cdot \sem_{\gamma^t(\alpha)}. \]
    Taking the principal specialization at $1$ yields
    \[ \T_w(t) = \sum_{\alpha \in \CC} \overleftarrow{\kappa^w_\alpha} \cdot \sem_{\gamma^t(\alpha)}(1) = \sum_{\alpha \in \CC} \overleftarrow{\kappa^w_\alpha} \cdot B_\alpha(t). \]
    This equality holds for all sufficiently large integers $t$. Both sides are polynomials, so it holds identically.
\end{proof}

\begin{example}
    Take $w=312$. Its unique reduced word is $(2,1)$, so
    \[\T_{312}(t)= \frac{1}{2!}(t+2)(t+1)= \binom{t+2}{2}.\]
    We may also recover this from the back stable SEM expansion
    \[\overleftarrow{\schub}_{312}
    =\overleftarrow e_{(1,1,0, \ldots)}-\overleftarrow e_{(0,2, 0, \ldots)},
    \]
    Proposition~6.3 gives
    \[\T_{312}(t)=\binom{t+1}{1}\binom{t+2}{1}-\binom{t+2}{2}.\]
    Hence
    \[\T_{312}(t)= (t+1)(t+2)-\frac{(t+1)(t+2)}{2}= \frac{(t+1)(t+2)}{2},\]
    agreeing with the reduced-word formula.
\end{example}

The two formulations of $\T_w$ allow us to explicitly connect back stable SEM coefficients of $\bschub_w$ to the reduced words of $w$.

For $\alpha\in\CC$, write
\[ \binom{|\alpha|}{\alpha}=
   \frac{|\alpha|!}{\prod_{i\in\Z}\alpha_i!}. \]

\begin{lemma}
For any $w \in S_\infty$,
    \[ \sum_{\alpha \in \CC} \overleftarrow{\kappa^w_\alpha} \cdot \binom{|\alpha|}{\alpha} = |R(w)|. \]
\end{lemma}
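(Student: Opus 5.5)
Compare leading coefficients in $t$ of the two formulas for $\T_w(t)$. Set $\ell=\ell(w)$. The reduced-word definition gives
\[ \T_w(t) = \frac{1}{\ell!}\sum_{a\in R(w)} \prod_{j=1}^{\ell}(t+a_j). \]
Each summand is a monic polynomial of degree $\ell$ in $t$. Hence $\T_w(t)$ has degree at most $\ell$, and its coefficient of $t^{\ell}$ is $|R(w)|/\ell!$. The case $\ell=0$ is trivial: both sides equal $1$, since $\bschub_{\mathrm{id}}=\bsem{0}$.

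The proposition just proved gives $\T_w(t)=\sum_\alpha \overleftarrow{\kappa^w_\alpha}\, B_\alpha(t)$, with $B_\alpha(t)=\prod_i\binom{t+i}{\alpha_i}$. Each factor $\binom{t+i}{\alpha_i}$ is a polynomial in $t$ of degree $\alpha_i$ with leading coefficient $1/\alpha_i!$. Therefore $B_\alpha(t)$ has degree $|\alpha|$ and leading coefficient $\prod_i 1/\alpha_i!$.

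By Corollary~\ref{cor:homogeneous-support}, every $\alpha$ with $\overleftarrow{\kappa^w_\alpha}\neq0$ has $|\alpha|=\ell$. So every term of the right-hand side has degree exactly $\ell$, and the coefficient of $t^\ell$ there is
\[ \sum_\alpha \overleftarrow{\kappa^w_\alpha}\prod_i \frac{1}{\alpha_i!} = \frac{1}{\ell!}\sum_\alpha \overleftarrow{\kappa^w_\alpha}\binom{|\alpha|}{\alpha}. \]
Equating the two $t^\ell$ coefficients and multiplying by $\ell!$ gives the lemma.

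The only step needing care is the use of homogeneity: it lets us read off the top-degree coefficient without cancellation between terms of different degree, and it lets us replace $|\alpha|$ by $\ell$ in the multinomial. The polynomial identity itself is already established in the preceding proposition. The hypothesis $w\in S_\infty$ is not really needed, since both sides are shift-invariant; shift-invariance of the left side follows from Fact~6.2, because the leading coefficient is unchanged under $t\mapsto t+1$.
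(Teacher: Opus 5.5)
Your proof is correct and is the paper's argument: you compare the $t^{\ell(w)}$ coefficients of the reduced-word formula for $\T_w(t)$ and of $\sum_\alpha \overleftarrow{\kappa^w_\alpha} B_\alpha(t)$, using Corollary~\ref{cor:homogeneous-support} to guarantee that every $B_\alpha$ in the support has degree exactly $\ell(w)$. Your closing remark that the hypothesis $w\in S_\infty$ can be dropped is also right, and more directly so than via Fact~6.2, since the preceding proposition is already stated for all $w\in S_\Z$.
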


\begin{proof}
    Consider the equality
    \[ \sum_{\alpha \in \CC} \overleftarrow{\kappa^w_\alpha} \cdot B_\alpha(t) = \frac{1}{\ell(w)!}\sum_{a \in R(w)} (t+a_1)(t+a_2) \cdots (t+a_{\ell(w)}). \]
    By Corollary~\ref{cor:homogeneous-support}, if $\overleftarrow{\kappa^w_\alpha} \neq 0$ then $|\alpha| = \ell(w)$, and thus each binomial product has degree $\ell(w)$. The leading coefficient of the left-hand side is therefore
    \[ \sum_{\alpha \in \CC} \frac{\overleftarrow{\kappa^w_\alpha}}{\prod_{i \in \Z} \alpha_i!}. \]
    Moreover, the leading coefficient of the right-hand side is $|R(w)|/\ell(w)!$. Hence,
    \[ |R(w)| = \ell(w)! \cdot \sum_{\alpha \in \CC} \frac{\overleftarrow{\kappa^w_\alpha}}{\prod_{i \in \Z} \alpha_i!} = \sum_{\alpha \in \CC} \overleftarrow{\kappa^w_\alpha} \cdot \binom{|\alpha|}{\alpha}. \]
\end{proof}

\section{Worked Examples}

We conclude by looking at families of permutations which have particularly nice back stable SEM expansions. Many of these permutations are known to be well behaved in the ordinary SEM expansion, but the back stable SEM expansions tend to reveal additional structure. 

\subsection{Longest Permutation}

Let $w^{(n)}=n(n-1)\cdots 21$ be the longest permutation in $S_n$. The flagged dual Jacobi--Trudi formula \cite{HJLM} gives, for every $m\geq 0$,
\[ \schub_{1^m\times w^{(n)}}
   =\det\left(
       \sem_{2j-i}(x_1,\ldots,x_{m+j})
     \right)_{1\leq i,j\leq n-1}. \]
Here and below, $\sem_0=1$ and $\sem_d=0$ for $d<0$. Shifting the variables by $-m$ and taking the coefficientwise limit gives the back stable determinant
\[\bschub_{w^{(n)}}=\det\left(\sem_{2j-i}(\ldots,x_{j-1},x_j)\right)_{1\leq i,j\leq n-1}.\]
Every elementary symmetric function in a term of the Leibniz expansion has a different right endpoint, since that endpoint is fixed by its column. Thus, no SEM straightening is required. Let
\[ A_n=\{\pi\in S_{n-1}:\pi(k)\leq 2k\text{ for every }k\in[n-1]\}. \]
For $\pi\in A_n$, define $\alpha_\pi\in\CC$ by
\[ (\alpha_\pi)_k = 2k-\pi(k), \qquad k \in [n-1] \]
and set all other entries equal to zero. Expanding by columns and omitting the terms containing an elementary symmetric function of negative degree gives
\[ \bschub_{w^{(n)}} = \sum_{\pi \in A_n} \operatorname{sgn}(\pi) \cdot \bsem{\alpha_\pi}. \]
The map $\pi\mapsto\alpha_\pi$ is injective on the nonzero terms, since $\pi(k)=2k-(\alpha_\pi)_k$. Consequently,
\[ \overleftarrow{\kappa^{w^{(n)}}_\alpha}\in\{0,\pm1\} \]
for every $\alpha\in\CC$.

For the dual statement in back stable CHMs,
\[ \widehat{w^{(n)}} = \gamma^{-n} \left ( w^{(n)} \right ) \]
and thus Dynkin duality tells us
\[ \bschub_{w^{(n)}} = \sum_{\pi \in A_n} \operatorname{sgn}(\pi) \cdot \bchm{\alpha_\pi'} \]
where $\alpha'_\pi = \gamma^n(\widehat{\alpha_\pi})$. In particular,
\[ (\alpha'_\pi)_k = (\alpha_\pi)_{n-k} = 2n-2k-\pi(n-k). \]

\begin{lemma}
    \[ \left| \supp_{\bE} \left ( \bschub_{w^{(n)}} \right ) \right| = \left \lfloor \frac n2 \right \rfloor! \cdot \left \lceil \frac n2 \right \rceil! \]
\end{lemma}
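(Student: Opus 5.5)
The plan is to reduce the lemma to counting the index set $A_n$, and then count $A_n$ by a standard greedy argument for permutations with upper bounds.

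First, the reduction. The expansion derived just above,
\[ \bschub_{w^{(n)}} = \sum_{\pi \in A_n} \operatorname{sgn}(\pi) \cdot \bsem{\alpha_\pi}, \]
is a linear combination of back stable SEMs. We showed that $\pi\mapsto\alpha_\pi$ is injective, because $\pi(k)=2k-(\alpha_\pi)_k$. So the indices $\alpha_\pi$ are pairwise distinct and no two terms cancel. Every coefficient is $\operatorname{sgn}(\pi)=\pm1\neq 0$. Since $\bE$ is a basis of $\Rg$, this expression is the back stable SEM expansion, and its support is exactly $\{\alpha_\pi : \pi\in A_n\}$. Hence
\[ \left|\supp_{\bE}\left(\bschub_{w^{(n)}}\right)\right| = |A_n|. \]

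Second, the count. A permutation $\pi\in S_{n-1}$ satisfies $\pi(k)\le 2k$ exactly when $\pi(k)\le b_k$, where $b_k=\min(2k,n-1)$. The sequence $b_k$ is weakly increasing, and $b_k\ge k$ for every $k$. Choose $\pi(1),\pi(2),\ldots,\pi(n-1)$ in this order.
\begin{itemize}
\item When we reach position $k$, the values $\pi(1),\ldots,\pi(k-1)$ are distinct.
\item They are at most $b_{k-1}\le b_k$, so all of them lie in $[b_k]$.
\item Hence there are exactly $b_k-(k-1)$ admissible choices for $\pi(k)$, whatever the earlier choices were.
\end{itemize}
This gives
\[ |A_n| = \prod_{k=1}^{n-1}\bigl(\min(2k,n-1)-k+1\bigr). \]

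Third, split the product at $m=\lfloor (n-1)/2\rfloor$.
\begin{itemize}
\item For $1\le k\le m$ we have $2k\le n-1$, so the factor is $k+1$. These factors contribute $(m+1)!$, and $m+1=\lceil n/2\rceil$ (check $n$ even and $n$ odd separately).
\item For $m<k\le n-1$ the factor is $n-k$. These factors run over $1,2,\ldots,n-1-m$ and contribute $(n-1-m)!$. Here $n-1-m=\lfloor n/2\rfloor$, again by parity.
\end{itemize}
Multiplying gives $\lfloor n/2\rfloor!\cdot\lceil n/2\rceil!$. The degenerate case $n=1$ gives the empty product $1$, which is consistent with the formula.

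The only step needing care is the greedy count: it relies on the bounds $b_k$ being weakly increasing, so that the number of choices at position $k$ does not depend on the earlier choices. The rest is parity bookkeeping.
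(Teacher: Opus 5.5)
Your proposal is correct and follows the paper's proof. You reduce to $|A_n|$ via the injectivity of $\pi\mapsto\alpha_\pi$, then choose $\pi(k)$ from left to right with $\min(2k,n-1)-(k-1)=\min(k+1,n-k)$ options at step $k$ and multiply. You also make explicit the support reduction (nonzero coefficients $\pm1$, pairwise distinct indices, $\bE$ a basis) and the parity bookkeeping for the final product, both of which the paper leaves implicit.
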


\begin{proof}
    We simply compute the number of $\pi \in S_{n-1}$ satisfying the nonnegativity constraint. Equivalently, we require $\pi(k) \le 2k$ for every $k\in[n-1]$. Construct $\pi$ from left to right. When position $k$ is reached, every previously used value lies in $[\min(2k,n-1)]$, and therefore there are exactly
    \[ \min(2k,n-1)-(k-1) = \min(k+1, n-k) \]
    choices at this step. Multiplying over $k$ gives
    \[ \prod_{k=1}^{n-1} \min(k+1, n-k)
       =  \left \lfloor \frac n2 \right \rfloor! \cdot \left \lceil \frac n2 \right \rceil!. \]
\end{proof}

\subsection{Grassmannian Permutations}

Let $w\in S_\infty$ be Grassmannian. Write $k$ for its unique descent and $\lambda$ for its associated partition. Equivalently,
\[ \lambda=(L(w)_k,L(w)_{k-1},\ldots,L(w)_1) \]
after trailing zero parts are removed. The usual Grassmannian identity is
\[ \schub_w=s_\lambda(x_1,\ldots,x_k). \]
If $r=\lambda_1$ and $\lambda'$ denotes the conjugate partition, Winkel's variant of the dual Jacobi--Trudi formula \cite[2.1]{Winkel}, related to the flagged Schur framework of Wachs \cite{Wachs}, states that
\[ s_\lambda(x_1,\ldots,x_k)
   =\det\left(
       \sem_{\lambda'_i-i+j}(x_1,\ldots,x_{k+j-1})
     \right)_{1\leq i,j\leq r}. \]
Applying this identity to $1^m\times w$, shifting by $-m$, and taking the back stable limit yields the following formula.

\begin{proposition}\label{prop:grassmannian-back-stable-sem}
    If $w$ is Grassmannian of shape $\lambda$ with unique descent at $k$, then
    \[ \bschub_w
       =\det\left(
          \sem_{\lambda'_i-i+j}(\ldots,x_{k+j-2},x_{k+j-1})
        \right)_{1\leq i,j\leq \lambda_1}. \]
\end{proposition}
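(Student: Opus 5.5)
The plan is to follow the strategy used for the longest permutation: apply Winkel's determinant to the shifted permutation $1^m\times w$, translate variables by $\gamma^{-m}$, and pass to the coefficientwise limit inside $\Rg$.

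First, note that $1^m\times w$ is again Grassmannian. Its unique descent is at $k+m$, and its shape is the same $\lambda$, since the Lehmer code is only shifted by $m$ zeros on the left. The Grassmannian identity and Winkel's formula then give
\[ \schub_{1^m\times w}=s_\lambda(x_1,\ldots,x_{k+m})=\det\left(\sem_{\lambda'_i-i+j}(x_1,\ldots,x_{k+m+j-1})\right)_{1\le i,j\le r}, \]
with $r=\lambda_1$ independent of $m$. Applying $\gamma^{-m}$, which is a ring homomorphism on polynomials, turns each entry into $\sem_{\lambda'_i-i+j}(x_{1-m},\ldots,x_{k+j-1})$. So $\gamma^{-m}\schub_{1^m\times w}$ equals the $r\times r$ determinant whose entries are these truncated elementary polynomials.

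Next, I would take $m\to\infty$. By the definition recalled in Section 2, $\bschub_w=\lim_m\gamma^{-m}\schub_{1^m\times w}$. Each entry converges coefficientwise to $\sem_{\lambda'_i-i+j}(\ldots,x_{k+j-2},x_{k+j-1})$, exactly as in the construction of back stable SEMs in Section 3.1. The determinant is a fixed polynomial expression, a signed sum of $r!$ products of $r$ entries, with $r$ not depending on $m$.

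The one point needing care is that the limit of a product is the product of the limits. I would handle it by truncation. Fix a monomial of degree $\ell(w)$ and a bound $N$ below every index appearing in it. For every $m$ with $1-m\le N$, applying $\rho_N$ (setting $x_i=0$ for $i<N$) to any truncated entry gives $\sem_d(x_N,\ldots,x_{k+j-1})$, which no longer depends on $m$. The same holds for the entry's limit. Since $\rho_N$ is a ring homomorphism, the coefficient of the fixed monomial in each Leibniz product stabilizes at the coefficient in the product of the limits. So both sides of the claimed identity have the same coefficient on every monomial.

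Finally, I would record that, as in the longest-permutation case, the right endpoint $k+j-1$ of each entry depends only on its column $j$. Hence each Leibniz term is already a back stable SEM $\bsem{\alpha}$, with no straightening needed. That remark is not required for the identity itself.
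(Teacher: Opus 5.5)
Your proposal is correct and takes the same approach as the paper: apply Winkel's formula to $1^m\times w$ (which is Grassmannian with descent at $k+m$ and the same shape $\lambda$), shift by $\gamma^{-m}$, and take the coefficientwise limit. Your truncation argument via $\rho_N$ shows that the limit of the fixed-size $r\times r$ determinant equals the determinant of the entrywise limits, a step the paper leaves implicit.
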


The determinant is already an alternating sum of back stable SEMs, since different columns have different right endpoints. More explicitly, let
\[ A_\lambda=\{\pi\in S_r:\lambda'_{\pi(j)}-\pi(j)+j\geq0\text{ for every }j\in[r]\}. \]
For $\pi\in A_\lambda$, define $\alpha^\pi\in\CC$ by
\[ \alpha^\pi_{k+j-1}=\lambda'_{\pi(j)}-\pi(j)+j,
   \qquad j\in[r], \]
and set its other entries equal to zero. Then
\[\bschub_w=\sum_{\pi\in A_\lambda}\operatorname{sgn}(\pi) \cdot \bsem{\alpha^\pi}.\]
Since the integers $\lambda'_i-i$ are strictly decreasing in $i$, the index $\alpha^\pi$ determines $\pi$. Hence every back stable SEM coefficient of a Grassmannian permutation belongs to $\{0,\pm1\}$.

\section{Further Directions}    

\subsection{Cancellation-Free Formula}

The coefficients $\overleftarrow{\kappa^w_\alpha}$ need not be positive, and the known determinantal formulas conceal substantial cancellation. It would be useful to find a direct signed model for these coefficients, together with a sign-reversing involution that isolates the surviving objects. Even a cancellation-free formula for a broad pattern-avoiding family would give a more conceptual explanation for the small coefficients observed in finite SEM expansions. The stabilization theorem suggests that such a model should be formulated directly in terms of $w$ and $\alpha$, rather than generated separately for every $1^m\times w$.

\end{document}